\def\BibTeX{{\rm B\kern-.05em{\sc i\kern-.025em b}\kern-.08em
    T\kern-.1667em\lower.7ex\hbox{E}\kern-.125emX}}
\newtheorem{thm}{Theorem}[section]
\newtheorem{cor}[thm]{Corollary}
\newtheorem{lem}[thm]{Lemma}
\newtheorem{prop}[thm]{Proposition}
\newtheorem{exmp}[thm]{Example}
\newtheorem{rem}[thm]{Remark}
\newtheorem*{claim*}{Claim}
\theoremstyle{definition}
\numberwithin{equation}{section}
\begin{document}
\title[Exponential ergodicity for SEs of nonnegative processes with jumps]{Exponential ergodicity for stochastic equations of nonnegative processes with jumps}

\author{Martin Friesen}
\address[Martin Friesen]{Fakult\"at f\"ur Mathematik und Naturwissenschaften\\ Bergische Universit\"at Wuppertal\\ 42119 Wuppertal, Germany}
\email[Martin Friesen]{friesen@math.uni-wuppertal.de}

\author[Peng Jin]{Peng Jin\textsuperscript{*}}
\thanks{\textsuperscript{*}Peng Jin is supported by the STU Scientific Research Foundation for Talents (No. NTF18023).}
\address[Peng Jin]{Department of Mathematics \\ Shantou University \\ Shantou, Guangdong 515063, China}
\email{pjin@stu.edu.cn}

\author{Jonas Kremer}
\address[Jonas Kremer]{Fakult\"at f\"ur Mathematik und Naturwissenschaften\\ Bergische Universit\"at Wuppertal\\ 42119 Wuppertal, Germany}
\email{kremer@math.uni-wuppertal.de}

\author{Barbara R\"udiger}
\address[Barbara R\"udiger]{Fakult\"at f\"ur Mathematik und Naturwissenschaften\\ Bergische Universit\"at Wuppertal\\ 42119 Wuppertal, Germany}
\email{ruediger@uni-wuppertal.de}

\date{\today}

\subjclass[2010]{Primary 60J80; Secondary 60J25, 60G10, 60F17, 60H20}

\keywords{Branching process, random environment, invariant distribution, ergodicity, coupling, Wasserstein distance}

\begin{abstract}
In this work, we study ergodicity of continuous time Markov processes on state space $\mathbb{R}_{\geq 0} := [0,\infty)$ obtained as unique strong solutions
to stochastic equations with jumps.
Our first main result establishes exponential ergodicity in the Wasserstein distance,
provided the stochastic equation satisfies a comparison principle and the drift
is dissipative. In particular, it is applicable to continuous-state branching processes with immigration (shorted as CBI processes), possibly with nonlinear branching mechanisms or in L\'evy random environments. Our second main result establishes exponential ergodicity in total variation distance for subcritical CBI processes under a first moment condition on the jump measure for branching and a $\log$-moment condition on the jump measure for immigration.

\end{abstract}

\maketitle

\allowdisplaybreaks

\section{Introduction}

The study of long-time behavior for continuous-time Markov processes is a classical and still popular topic in probability theory. In this paper we will investigate this problem for jump-diffusions on the state space $\mathbb{R}_{\geq 0} := [0,\infty)$, which include interesting classes of processes such as
\emph{continuous-state branching processes with immigration} (see, e.g., \cite{MR2760602,MR3496029}), possibly in L\'evy random environments (see \cite{MR3866603,MR3605717}),
\emph{continuous-state nonlinear branching processes} (see \cite{2017arXiv170801560L}),
and \emph{TCP processes} (see, e.g., \cite{MR3035738,MR2653264}).
All these processes just mentioned belong to the class of Markov processes with state space $\mathbb{R}_{\geq 0}$ whose Markov generator is, for twice continuously differentiable functions $f$ with compact support, i.e., $f \in C_c^2(\mathbb{R}_{\geq 0})$, of the form
\begin{align}
Lf(x) &=
b(x)f'(x) + \frac{1}{2}\int_{E}\sigma(x,u)^2 \varkappa(\mathrm{d}u)f''(x)
+\int_{U_1}\left( f(x + g_1(x,u)) - f(x)\right)\mu_1(\mathrm{d}u) \nonumber \\
 &\quad+ \int_{U_0}\left( f(x + g_0(x,u)) - f(x) - g_0(x,u)f'(x)\right) \mu_0(\mathrm{d}u), \qquad x \geq 0.\label{eq:gen general case}
\end{align}
Here $E,\thinspace U_0,\thinspace U_1$ are complete, separable metric spaces, $\varkappa,\thinspace \mu_0,\thinspace \mu_1$ are $\sigma$-finite measures and $b,\thinspace \sigma,\thinspace g_0,\thinspace g_1$ should satisfy certain restrictions such that the corresponding Markov process exists.
A pathwise construction for this type of Markov processes in terms of strong solutions to stochastic equations were developed in the works of Fu and Li \cite{MR2584896}, Dawson and Li \cite{MR2952093}, and Li and Pu \cite{MR2965746}. Additional related results for
stochastic equations on $\mathbb{R}_{\geq 0}$ can be found in Li and Mytnik \cite{MR2884224} as well as Fournier \cite{MR3060151}.

Let $\lbrace P_t(x,\mathrm{d}y) \thinspace : \thinspace t,\thinspace x \geq 0\rbrace$ be
the transition probabilities of a Markov process with state space $\mathbb{R}_{\geq0}$.
By $\mathcal{P}(\mathbb{R}_{\geq0})$ we denote the space of all Borel probability measures over $\mathbb{R}_{\geq0}$. We call $\pi \in \mathcal{P}(\mathbb{R}_{\geq 0})$ an \emph{invariant distribution} for $\lbrace P_t(x,\mathrm{d}y) \thinspace : \thinspace t,\thinspace x \geq 0\rbrace$, if
\[
\int_{\mathbb{R}_{\geq0}}P_t(x,\mathrm{d}y)\pi\left(\mathrm{d}x\right) = \pi(\mathrm{d}y), \quad t \geq 0.
\]
Existence of invariant distributions is often shown by various compactness arguments, see, e.g., Section 9 in Chapter 4 of \cite{MR838085} for some sufficient conditions.
Unlike existence, uniqueness of the invariant distribution may be a more demanding mathematical problem.
Once existence and uniqueness of an invariant distribution $\pi$ is shown, it is then natural to study the convergence of $P_t(x,\mathrm{d}y)$ to $\pi$.
In order to study such convergence, let us define, for $\varrho,\thinspace \widetilde{\varrho}\in\mathcal{P}(\mathbb{R}_{\geq0})$, the Wasserstein distance
\[
 W_d(\varrho, \widetilde{\varrho}) = \inf \left\{ \int_{\mathbb{R}_{\geq 0} \times \mathbb{R}_{\geq 0}}d(x,y)H(\mathrm{d}x, \mathrm{d}y) \ : \ H \text{ is a coupling of } (\varrho, \widetilde{\varrho}) \right\},
\]
where $d$ is a suitably chosen metric on $\mathbb{R}_{\geq 0}$.
Natural examples for $d$, among others, are $d(x,y) = \mathbbm{1}_{\lbrace x \neq y\rbrace}$ corresponding to the total variation distance and $d(x,y) = |x-y|$ in accordance with the Kantorovich-Rubinstein distance. We will collect some basic properties of $W_d$ in Section \ref{sec:wasserstein distances}, while a detailed treatment of Wasserstein distances is provided in the monograph of Villani \cite{MR2459454}.

We call a Markov process \emph{exponentially ergodic in $W_d$}, if there exists a constant $A > 0$ and a function $K : \mathbb{R}_{\geq 0} \to \mathbb{R}_{\geq 0}$ satisfying
\[
W_d( P_t(x,\cdot), \pi) \leq K(x)\mathrm{e}^{-A t}, \quad t,\thinspace x \geq 0.
\]
A widely used approach for the study of exponential ergodicity in the total variation distance (i.e. $d(x,y) = \mathbbm{1}_{\lbrace x \neq y \rbrace}$) is due to Meyn and Tweedie \cite{MR2509253,MR1234295}.
The essential obstacle when applying their approach lies within the ``irreducibility
of a skeleton chain''.
To prove this, it is sufficient and customary to verify that $P_t(x,\mathrm{d}y)$ has a
jointly continuous density which is strictly positive.
While such an approach is suitable for diffusion processes, the situation is more delicate and
requires a custom-tailored analysis when dealing with Markov processes with jumps. Albeit
being challenging, the approach of Meyn and Tweedie has already been successfully applied to diverse Markov
processes with jumps.
Another approach to prove ergodicity of Markov processes is based on the construction of successful couplings.
Such construction is usually closely related with the mathematical model at hand and often a difficult task, see, e.g., \cite{B14,MR3568041,MR3859440,MR3474827}.

In this work we provide a simple approach to exponential ergodicity in the Wasserstein distance $W_d$ with $d(x,y) = |x-y|$ for Markov processes on $\mathbb{R}_{\geq 0}$ which can be constructed as strong solutions of stochastic equations satisfying the comparison principle. In particular, based on the comparison principle we estimate the trajectories of the Markov process and deduce from that the existence, uniqueness of invariant distributions, and exponential ergodicity in the Wasserstein distance.
The corresponding main result is formulated in Theorem \ref{thm:exponential ergodicity}. Similar ideas have been previously applied in \cite{FJR} to affine processes which include the specific case of continuous-state branching processes with immigration. To illustrate the usage of Theorem \ref{thm:exponential ergodicity}, we apply it to three classes of Markov processes on $\mathbb{R}_{\geq 0}$ which we next explain in more details. We will also discuss limitations and possible improvements of Theorem \ref{thm:exponential ergodicity} in these particular cases.

\subsection{Continuous-state nonlinear branching processes with immigration}

\emph{Continuous-state nonlinear branching processes (CNB process)} were recently introduced and studied by Li \textit{et al.} (2017) \cite{2017arXiv170801560L}. In this paper we add to the CNB process a general immigration mechanism and therefore call it a \emph{continuous-state nonlinear branching process with immigration (CNBI process)}.
The corresponding Markov generator for the class of CNBI processes is, for $f\in C_c^2(\mathbb{R}_{\geq0})$, of the form
\begin{align}\label{EQ:NONLINEARCBI}
 Lf(x) &= \gamma_0(x) f'(x) + \frac{\gamma_1(x)}{2}f''(x) \nonumber \\
&\quad + \gamma_2(x) \int_{\mathbb{R}_{\geq0}}(f(x+z) - f(x) - z f'(x))m(\mathrm{d}z) + \int_{\mathbb{R}_{\geq0}}\left(f(x+z)-f(x)\right)\nu(\mathrm{d}z),
\end{align}
where $\gamma_0,\thinspace \gamma_1,\thinspace \gamma_2$ are Borel-functions on $\mathbb{R}_{\geq0}$, $\gamma_0(0) \geq 0$ satisfying $\gamma_1, \gamma_2 \geq 0$, and $m$, $\nu$ are Borel measures on $(0,\infty)$ satisfying
\begin{equation}
\int_{\mathbb{R}_{\geq 0}} (z \wedge z^2) m(\mathrm{d}z) + \int_{\mathbb{R}_{\geq 0}}(1 \wedge z) \nu(\mathrm{d}z) < \infty.\label{eq:measure conditions}
\end{equation}
If $\gamma_0,\thinspace \gamma_1,\thinspace \gamma_2$ are locally Lipschitz continuous on $(0,\infty)$ and $\nu\equiv0$, then a pathwise construction of the corresponding Markov process (called a CNB process) is established in \cite{2017arXiv170801560L}.
More precisely, the authors identified the CNB process with a unique strong solution to a certain stochastic equation with the additional restriction that $0$ and $\infty$ are traps.
The last requirement is obligatory to treat particular cases as, e.g., $\gamma_2(x) = x^p$ with $p>0$.
The authors then studied extinction, explosion, and coming down from infinity behaviors of the CNB process.
In Section \ref{sec:CNB} we provide some simple sufficient conditions on the parameters $\gamma_0,\thinspace \gamma_1,\thinspace \gamma_2$ such that the CNBI process is exponentially ergodic in the Wasserstein distance $W_1$ (see Theorem \ref{thm:exponential ergodicity in wassterstein distance for nonlinear cbi}).
To the best of our knowledge, it is the first ergodicity result for general CNBI processes.

\subsection{Continuous-state branching processes with immigration}

\emph{Continuous-state branching processes with immigration (CBI processes)}  are particular cases of CNBI processes.
They have been first introduced by Feller (1950) \cite{MR0046022} and Ji\v{r}ina (1958) \cite{MR0101554} and then developed by Kawazu and Watanabe (1971) \cite{doi:10.1137/1116003}.
For a detailed treatment of CBI processes encompassing a concise introduction we refer to
the monographs of Li \cite{MR2760602} and Pardoux \cite{MR3496029}.
Following \cite{doi:10.1137/1116003}, CBI processes are Feller processes whose Markov generator is, for $f \in C_c^2(\mathbb{R}_{\geq 0})$, given by
\begin{align}
Lf(x)
&=
\left(\beta-bx\right)f'(x)+ \frac{\sigma^2}{2} xf''(x) \nonumber \\
&\quad +
x\int_0^{\infty}\left(f(x+z)-f(x)-zf'(x)\right)m\left(\mathrm{d}z\right) +
\int_0^{\infty}\left(f(x+z)-f(x)\right)\nu\left(\mathrm{d}z\right).\label{eq:generator cbi}
\end{align}
Here $(\beta, b, \sigma, m,\nu)$ are \emph{admissible parameters} in the sense that $\beta \geq 0$, $b \in \mathbb{R}$, $\sigma \geq 0$, $m$ and $\nu$ satisfy \eqref{eq:measure conditions}.
In Section \ref{sec:CBI} we also briefly recall other characterizations of CBI processes
in terms of their Laplace transforms and strong solutions to stochastic equations with jumps motivated by \cite{MR2243880,MR2584896}.

Previously, a number of authors investigated the long-time behavior of CBI processes.
Pinsky \cite{MR0295450} announced the existence of a limit distribution for subcritical ($b>0$) CBI processes under the condition
\begin{align}\label{eq:inequality w_log pi}
\int_{\{ z > 1\}}\log(z)\nu\left(\mathrm{d}z\right)<\infty.
\end{align}
It was shown subsequently in \cite[Theorem 3.20 and Corollary 3.21]{MR2760602} and \cite[Theorem 3.16]{MR2779872} that for subcritical CBI processes condition \eqref{eq:inequality w_log pi} is equivalent to the weak convergence of the associated transition probabilities towards a limiting distribution. The limit distribution was also shown to be the unique invariant distribution for the CBI process.
Properties of this distribution were investigated in \cite{MR2922631}.
A multidimensional version of Pinsky's result was recently studied in \cite{2018arXiv181205402J}, while in \cite{FJR} exponential ergodicity in different Wasserstein distances
was derived under reasonable integrability conditions on $\nu$.

In the setting of CBI processes, our Theorem \ref{thm:exponential ergodicity} is applicable but the obtained result is not particularly strong, compared with \cite[Theorem 1.6]{FJR}. On the other hand, the exponential ergodicity in the total variation distance
is yet under current investigation.
Based on the approach of Meyn and Tweedie, particular examples
have been studied in
\cite{2017arXiv170900969J,MR3451177,MR3437080,MR2287102} using the condition that
\begin{align}\label{eq:inequality p pi}
\exists \  \varepsilon > 0 \text{ such that } \int_{\{ z > 1\}}z^{\varepsilon} \nu\left(\mathrm{d}z\right)<\infty
\end{align}
in order to derive a Lyapunov drift criteria inherent in the approach of Meyn and Tweedie.
An alternative approach based on the construction of a successful coupling  was recently established by Li and Ma \cite{MR3343292}.
Following \cite[Theorem 2.5]{MR3343292}, a subcritical CBI process with admissible parameters $(b,\beta,\sigma,m,\nu\equiv0)$ is exponentially ergodic in the total variation distance, provided Grey's condition on the immigration mechanism is satisfied (see condition (5.a) below).

In Section \ref{sec:CBI} we extend the aforementioned results and establish exponential ergodicity in total variation distance for general subcritical CBI processes (including the case $\nu\not\equiv0$) under the weaker integrability condition \eqref{eq:inequality w_log pi} (compared with \eqref{eq:inequality p pi}).
The corresponding main result is formulated in Theorem \ref{thm:exponential ergodicity cbi} and it seems to the first result establishing a convergence rate in total variation distance merely under \eqref{eq:inequality w_log pi}.

Exponential ergodicity often plays an essential role in statistical estimation of the parameters of the underlying process, as demonstrated in several articles, see \cite{MR3769658,MR3175784,MR2995525,MR3343292} and the references therein. Eventually, to illustrate some applications of Theorem \ref{thm:exponential ergodicity cbi}, we add at the end of section 5 a strong law of large numbers as well as a functional central limit theorem for CBI processes (see Corollaries \ref{cor:slln} and \ref{cor:FCLT} below).

\subsection{Continuous-state branching processes with immigration in the L\'evy random environment}

A \emph{continuous-state branching process with immigration in a L\'evy random environment (CBIRE process)} is a Markov process on $\mathbb{R}_{\geq 0}$ with generator $L = L_0 + L_1$ acting on $C_c^2(\mathbb{R}_{\geq0})$, where $L_0$ is given by \eqref{eq:generator cbi} and
\begin{align}
 L_1f(x) &= b_E x f'(x) + \frac{\sigma_E^2}{2} x^2 f''(x) + \int_{[-1,1]^c}\left( f(xe^z) - f(x)\right) \mu_E(\mathrm{d}z) \nonumber \\
 &\quad
+ \int_{[-1,1]}\left( f(xe^z) - f(x) - x(e^z - 1)f'(x) \right)\mu_E(\mathrm{d}z).\label{eq:generator  CBIRE}
\end{align}
Here, $b_E \in \mathbb{R}$, $\sigma_{E} \geq 0$ and $\mu_E$ is a L\'evy measure on $\mathbb{R}$.
We refer to He \textit{et al.} \cite{MR3866603} for the general theory and a comprehensive introduction of CBIRE processes, see also the works of Palau and Pardo \cite{MR3605717,MR3745730}. In \cite{MR3866603} the authors gave a necessary and sufficient condition for the existence of invariant distributions of CBIRE processes. It is our aim in Section \ref{sec:CBIRE} to prove ergodicity in both the Wasserstein and total variation distance for these processes (see Theorems \ref{thm:ergodicity in Wasserstein distance CBIRE} and \ref{thm:ergodicity in tv distance CBIRE} below). Those results are obtained with the help of our previously obtained ergodic results in Sections \ref{sec:Stochastic equations of nonnegative processes with jumps} and \ref{sec:CBI}.

\subsection{Structure of the work}
In Section \ref{sec:wasserstein distances} we recall some properties of Wasserstein distances.
Exponential ergodicity in the Wasserstein distance for Markov processes
with generator \eqref{eq:gen general case} is established
in Section \ref{sec:Stochastic equations of nonnegative processes with jumps}.
The particular example of CNBI processes is then discussed in Section \ref{sec:CNB}.
Next, in Section \ref{sec:CBI} we study the exponential ergodicity in total variation distance for CBI processes.
Finally, in Section \ref{sec:CBIRE} an application of our previous results to CBIRE processes is given.

\section{Some basic properties of Wasserstein distances}
\label{sec:wasserstein distances}

By $\mathcal{P}(\mathbb{R}_{\geq0})$ we denote the space of all Borel probability measures over $\mathbb{R}_{\geq0}$.
Given $\varrho,\thinspace \widetilde{\varrho}\in\mathcal{P}(\mathbb{R}_{\geq0})$, a coupling $H$ of $(\varrho,\widetilde{\varrho})$ is a Borel probability measure on $\mathbb{R}_{\geq0}\times\mathbb{R}_{\geq0}$ which has marginals $\varrho$ and $\widetilde{\varrho}$, respectively.
We write $\mathcal{H}(\varrho,\widetilde{\varrho})$ for the collection of all such couplings. Let $d$ be a metric on $\mathbb{R}_{\geq 0}$ such that $(\mathbb{R}_{\geq 0}, d)$ is a complete separable metric space and define
\[
\mathcal{P}_{d}\left(\mathbb{R}_{\geq0}\right)
=\left\lbrace \varrho\in\mathcal{P}\left(\mathbb{R}_{\geq0}\right)\thinspace:\thinspace \int_{\mathbb{R}_{\geq 0}} d(x,0)\varrho\left(\mathrm{d}x\right)<\infty\right\rbrace.
\]
The \emph{Wasserstein distance} on $\mathcal{P}_{d}(\mathbb{R}_{\geq0})$ is defined by
\begin{align}\label{WASSERSTEIN}
W_{d}\left(\varrho,\widetilde{\varrho}\right)=\inf\left\lbrace\int_{\mathbb{R}_{\geq0}\times\mathbb{R}_{\geq0}}d(x,y) H\left(\mathrm{d}x,\mathrm{d}y\right)\thinspace:\thinspace H\in\mathcal{H}\left(\varrho,\widetilde{\varrho}\right)\right\rbrace.
\end{align}
Note that, since $\varrho$ and $\widetilde{\varrho}$ belong to $\mathcal{P}_d(\mathbb{R}_{\geq0})$, the expression $W_d(\varrho,\widetilde{\varrho})$ is finite.
Moreover, it can be shown that this infimum is attained
(see \cite[p.95]{MR2459454}), i.e., there exists $H\in\mathcal{H}(\varrho,\widetilde{\varrho})$ such that
\begin{align}\label{EQ:INFATTAINED}
W_d\left(\varrho,\widetilde{\varrho}\right)=\int_{\mathbb{R}_{\geq0}\times\mathbb{R}_{\geq0}}d(x,y)H\left(\mathrm{d}x,\mathrm{d}y\right).
\end{align}
Since $(\mathbb{R}_{\geq 0}, d)$ is supposed to be a complete separable metric space, according to \cite[Theorem 6.16]{MR2459454}, $(\mathcal{P}_{d}(\mathbb{R}_{\geq0}),W_{d})$ is also a complete separable metric space.
In the remainder of the article, we will use the following particular examples.

\begin{exmp} $ $
\begin{enumerate}
\item[(a)] If $d_{TV}(x,y) = \mathbbm{1}_{ \{ x \neq y\} }$, then
$\mathcal{P}_{d_{TV}}(\mathbb{R}_{\geq 0}) = \mathcal{P}(\mathbb{R}_{\geq 0})$ and
\[
W_{d_{TV}}(\varrho, \widetilde{\varrho}) = \frac{1}{2}\| \varrho - \widetilde{\varrho}\|_{TV} := \frac{1}{2}\sup \left\{ | \varrho(A) - \widetilde{\varrho}(A) | \ : \ A \subset \mathbb{R} \text{ Borel set} \right\}
\]
is the total variation distance.
\item[(b)] The Wasserstein-1-distance corresponds to $d_1(x,y) = |x-y|$, where
\[
\mathcal{P}_{d_1}(\mathbb{R}_{\geq 0}) := \mathcal{P}_1(\mathbb{R}_{\geq 0}) := \left\{ \varrho \in \mathcal{P}(\mathbb{R}_{\geq 0}) \ : \ \int_{\mathbb{R}_{\geq 0}}x \varrho(\mathrm{d}x) < \infty \right\}.
\]
In this case we adopt the shorthand $W_1 := W_{d_1}$.
\item[(c)] If $d_{\log}(x,y) = \log(1 + |x-y|)$, then
\[
\mathcal{P}_{d_{\log}}(\mathbb{R}_{\geq 0}) := \mathcal{P}_{\log}\left(\mathbb{R}_{\geq0}\right)
:=\left\lbrace \varrho\in\mathcal{P}\left(\mathbb{R}_{\geq0}\right)\thinspace:\thinspace \int_{\lbrace x>1\rbrace}\log (x)\varrho\left(\mathrm{d}x\right)<\infty\right\rbrace
\]
and $W_{\log} := W_{d_{\log}}$ is suited for CBI processes studied in Section \ref{sec:CBI}.
\end{enumerate}
\end{exmp}

One simple but important property of Wasserstein distances is their convexity
as formulated below.

\begin{lem}\label{lem:convexity of wasserstein distances}
Let $d$ be a metric such that $(\mathbb{R}_{\geq 0},d)$ is a complete separable metric space. Let $\varrho,\thinspace \widetilde{\varrho} \in \mathcal{P}_d(\mathbb{R}_{\geq 0})$ and suppose that $P_t(x,\mathrm{d}y)$ is a Markov kernel on $\mathbb{R}_{\geq 0}$.
Then, for any $H \in \mathcal{H}(\varrho, \widetilde{\varrho})$, we have
\[
W_d\left( \int_{\mathbb{R}_{\geq 0}}P(x,\cdot)\varrho(\mathrm{d}x), \int_{\mathbb{R}_{\geq 0}}P(x,\cdot)\widetilde{\varrho}(\mathrm{d}x)\right)
\leq \int_{\mathbb{R}_{\geq 0}\times \mathbb{R}_{\geq 0}} W_d(P(x,\cdot), P(y,\cdot)) H(\mathrm{d}x, \mathrm{d}y).
\]
\end{lem}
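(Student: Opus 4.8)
The plan is to prove the inequality by building an explicit coupling of the two mixed measures out of two ingredients: the optimal coupling $H$ of $(\varrho,\widetilde\varrho)$ provided by the hypothesis, and, for each pair $(x,y)$, an optimal (or near-optimal) coupling $Q_{x,y}$ of the Markov kernels $(P(x,\cdot),P(y,\cdot))$ whose cost realizes $W_d(P(x,\cdot),P(y,\cdot))$. First I would recall from Section~\ref{sec:wasserstein distances} that the infimum defining $W_d$ is attained, so that such $Q_{x,y}$ exist pointwise; the only subtlety is measurability of $(x,y)\mapsto Q_{x,y}$, which is a standard measurable-selection fact (e.g.\ via a Kuratowski--Ryll-Nardzewski type argument applied to the closed set of optimal couplings, or one can sidestep it by working with $\varepsilon$-optimal couplings chosen through a fixed Borel selection and letting $\varepsilon\downarrow0$ at the end).

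Given that, I would define the probability measure
\[
G(A\times B) = \int_{\mathbb{R}_{\geq0}\times\mathbb{R}_{\geq0}} Q_{x,y}(A\times B)\, H(\mathrm{d}x,\mathrm{d}y)
\]
on $\mathbb{R}_{\geq0}\times\mathbb{R}_{\geq0}$, and check that it is a coupling of $\bigl(\int P(x,\cdot)\varrho(\mathrm{d}x),\int P(x,\cdot)\widetilde\varrho(\mathrm{d}x)\bigr)$. This is immediate from the marginal properties: the first marginal of $G$ is $A\mapsto \int P(x,A)\,H(\mathrm{d}x,\mathrm{d}y) = \int P(x,A)\,\varrho(\mathrm{d}x)$ since $H$ has first marginal $\varrho$, and symmetrically for the second. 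Hence $G$ is an admissible competitor in the infimum defining the left-hand side.

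Finally I would estimate the cost of $G$: by definition of $W_d$ as an infimum,
\[
W_d\!\left( \int P(x,\cdot)\varrho(\mathrm{d}x), \int P(x,\cdot)\widetilde\varrho(\mathrm{d}x)\right)
\le \int d(u,v)\, G(\mathrm{d}u,\mathrm{d}v)
= \int\!\!\int d(u,v)\, Q_{x,y}(\mathrm{d}u,\mathrm{d}v)\, H(\mathrm{d}x,\mathrm{d}y),
\]
where the last equality is Fubini/Tonelli (legitimate since $d\geq0$). Since $Q_{x,y}$ was chosen optimal, the inner integral equals $W_d(P(x,\cdot),P(y,\cdot))$, giving exactly the claimed bound. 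I expect the only real obstacle to be the measurability of the selection $(x,y)\mapsto Q_{x,y}$ together with the measurability of $(x,y)\mapsto W_d(P(x,\cdot),P(y,\cdot))$ needed to make the right-hand integral well-defined; both are standard but deserve a sentence, and the cleanest route is the $\varepsilon$-optimal coupling device, which only requires a Borel selection of an $\varepsilon$-almost-minimizer and then a limit, avoiding the attainment and exact-selection issues altogether.
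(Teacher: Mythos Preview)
Your argument is correct and is the standard ``integrate the optimal couplings'' proof of convexity for Wasserstein distances. Note, however, that the paper does not actually prove this lemma: it simply refers the reader to \cite[Theorem~4.8]{MR2459454}, so there is no in-paper proof to compare against. Your construction of $G$ by mixing pointwise (near-)optimal couplings $Q_{x,y}$ against $H$, followed by Tonelli, is exactly the mechanism underlying that reference, and you have correctly isolated the only genuine technical issue (Borel measurability of $(x,y)\mapsto Q_{x,y}$ and of $(x,y)\mapsto W_d(P(x,\cdot),P(y,\cdot))$); the $\varepsilon$-optimal selection workaround you describe is a clean way to handle it.
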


For a proof we refer the reader to \cite[Theorem 4.8]{MR2459454}. The convolution between measures $\varrho$ and $\widetilde{\varrho}$ on $\mathbb{R}_{\geq 0}$ is denoted by $\varrho \ast \widetilde{\varrho}$.
We close the presentation with a useful convolution estimate for Wasserstein distances.
\begin{lem}\label{lem:convolution estimate} $ $
Let $d$ be a metric such that $(\mathbb{R}_{\geq 0},d)$ is a complete separable metric space. Suppose that
\[
d(x+y, \widetilde{x} + y) \leq d(x,\widetilde{x}), \qquad x,\widetilde{x},y \geq 0.
\]
Let $\varrho,\thinspace \widetilde{\varrho}, \thinspace g \in\mathcal{P}_{d}(\mathbb{R}_{\geq0})$. Then $
W_{d}(\varrho \ast g,\widetilde{\varrho}\ast g)\leq W_{d}( \varrho,\widetilde{\varrho})$.
\end{lem}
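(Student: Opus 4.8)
The plan is to build an explicit coupling of $\varrho\ast g$ and $\widetilde{\varrho}\ast g$ out of an optimal coupling of $\varrho$ and $\widetilde{\varrho}$ by adding an independent $g$-distributed random variable to both coordinates, and then to use the contraction hypothesis on $d$ to control its cost. Concretely, I would first check that the convolutions lie in $\mathcal{P}_d(\mathbb{R}_{\geq0})$, so that $W_d(\varrho\ast g,\widetilde{\varrho}\ast g)$ is well-defined and finite: taking $\widetilde{x}=0$ in the hypothesis gives $d(x+y,y)\le d(x,0)$, hence $d(x+y,0)\le d(x+y,y)+d(y,0)\le d(x,0)+d(y,0)$ by the triangle inequality, and integrating this against the product measure $\varrho\otimes g$ (which represents $\varrho\ast g$) yields $\int_{\mathbb{R}_{\geq0}}d(z,0)(\varrho\ast g)(\mathrm{d}z)\le \int d(x,0)\varrho(\mathrm{d}x)+\int d(y,0)g(\mathrm{d}y)<\infty$, and similarly for $\widetilde{\varrho}\ast g$.

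\textbf{Main steps.} Let $H\in\mathcal{H}(\varrho,\widetilde{\varrho})$ be an optimal coupling, so that $W_d(\varrho,\widetilde{\varrho})=\int_{\mathbb{R}_{\geq0}\times\mathbb{R}_{\geq0}}d(x,y)H(\mathrm{d}x,\mathrm{d}y)$, which exists by \eqref{EQ:INFATTAINED}. Define $\widetilde{H}$ to be the image (push-forward) of the product measure $H\otimes g$ under the continuous map $(x,y,z)\mapsto(x+z,y+z)$ from $\mathbb{R}_{\geq0}^3$ to $\mathbb{R}_{\geq0}^2$. Probabilistically, if $(X,\widetilde{X})\sim H$ and $Z\sim g$ are independent, then $\widetilde{H}=\mathrm{Law}(X+Z,\widetilde{X}+Z)$; its first marginal is $\mathrm{Law}(X+Z)=\varrho\ast g$ and its second marginal is $\mathrm{Law}(\widetilde{X}+Z)=\widetilde{\varrho}\ast g$, so $\widetilde{H}\in\mathcal{H}(\varrho\ast g,\widetilde{\varrho}\ast g)$. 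Using $\widetilde{H}$ as a competitor in the infimum \eqref{WASSERSTEIN} and then Fubini together with the contraction hypothesis applied pointwise under the integral,
\[
W_d(\varrho\ast g,\widetilde{\varrho}\ast g)\le\int_{\mathbb{R}_{\geq0}\times\mathbb{R}_{\geq0}}d(u,v)\,\widetilde{H}(\mathrm{d}u,\mathrm{d}v)=\int\!\!\int d(x+z,\widetilde{x}+z)\,g(\mathrm{d}z)\,H(\mathrm{d}x,\mathrm{d}\widetilde{x})\le\int d(x,\widetilde{x})\,H(\mathrm{d}x,\mathrm{d}\widetilde{x})=W_d(\varrho,\widetilde{\varrho}),
\]
which is the claimed inequality. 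The only measurability point to record is that $(x,\widetilde{x},z)\mapsto d(x+z,\widetilde{x}+z)$ is Borel, which follows from the joint continuity of the metric $d$ and of addition on $\mathbb{R}_{\geq0}$.

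\textbf{Expected difficulty.} There is no substantial obstacle here; the proof is essentially bookkeeping. The only points requiring a moment's care are verifying that $\varrho\ast g$ and $\widetilde{\varrho}\ast g$ indeed belong to $\mathcal{P}_d(\mathbb{R}_{\geq0})$ (handled above via the $\widetilde{x}=0$ case of the hypothesis and the triangle inequality) and confirming that the push-forward $\widetilde{H}$ genuinely has marginals $\varrho\ast g$ and $\widetilde{\varrho}\ast g$; both are routine. Note also that attainment of the infimum in $W_d(\varrho,\widetilde{\varrho})$ is convenient but not essential — one could equally well work with a sequence of $\varepsilon$-optimal couplings and let $\varepsilon\downarrow0$.
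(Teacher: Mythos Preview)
Your proof is correct but takes a different route from the paper. The paper argues via Kantorovich--Rubinstein duality: writing
\[
W_d(\varrho\ast g,\widetilde{\varrho}\ast g)=\sup_{\|h\|_{\mathrm{Lip}}\le 1}\left|\int_{\mathbb{R}_{\ge 0}} h\,\mathrm{d}(\varrho\ast g)-\int_{\mathbb{R}_{\ge 0}} h\,\mathrm{d}(\widetilde{\varrho}\ast g)\right|,
\]
one notes that $\int h\,\mathrm{d}(\varrho\ast g)=\int h_g\,\mathrm{d}\varrho$ with $h_g(x)=\int h(x+y)\,g(\mathrm{d}y)$, and the translation-contraction hypothesis on $d$ gives $\|h_g\|_{\mathrm{Lip}}\le 1$, so the supremum is bounded by $W_d(\varrho,\widetilde{\varrho})$. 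You instead work on the primal side, building an explicit coupling of the convolutions by adding an independent $g$-sample to both coordinates of an optimal coupling of $(\varrho,\widetilde{\varrho})$. Your approach is more self-contained since it does not invoke the duality theorem, and it has the bonus of explicitly checking that $\varrho\ast g,\widetilde{\varrho}\ast g\in\mathcal{P}_d(\mathbb{R}_{\ge 0})$, a point the paper leaves implicit; the duality argument is marginally shorter once Kantorovich--Rubinstein is available.
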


\begin{proof}
We define $\Vert h \Vert_{\mathrm{Lip}} = \sup_{x \neq y} \frac{|h(x) - h(y)|}{d(x,y)}$.
Using the Kantorovich-Duality we obtain
\begin{align*}
W_{d}(\varrho \ast g, \widetilde{\varrho}\ast g) &= \sup_{\| h \|_{\mathrm{Lip}} \leq 1} \left| \int_{\mathbb{R}_{\geq 0}} h(x) (\varrho\ast g)(\mathrm{d}x) - \int_{\mathbb{R}_{\geq 0}}h(x)(\widetilde{\varrho}\ast g)(\mathrm{d}x)\right| \\
&= \sup_{\| h \|_{\mathrm{Lip}} \leq 1} \left| \int_{\mathbb{R}_{\geq 0}} h_g(x) \varrho(\mathrm{d}x) - \int_{\mathbb{R}_{\geq 0}}h_g(x)\widetilde{\varrho}(\mathrm{d}x)\right| \\
&\leq \sup_{\| h \|_{\mathrm{Lip}} \leq 1} \left| \int_{\mathbb{R}_{\geq 0}} h(x) \varrho(\mathrm{d}x) - \int_{\mathbb{R}_{\geq 0}}h(x) \widetilde{\varrho}(\mathrm{d}x)\right| = W_d(\varrho, \widetilde{\varrho}),
\end{align*}
where we used that $h_g(x)=\int_{\mathbb{R}_{\geq0}}h(x+y)g\left(\mathrm{d}y\right)$ satisfies $\| h_g\|_{\mathrm{Lip}} \leq 1$.
\end{proof}

Although we formulated Lemma \ref{lem:convexity of wasserstein distances} and \ref{lem:convolution estimate} on the state space $\mathbb{R}_{\geq 0}$,
it is clear that they naturally extend to more abstract state spaces.
In particular, Lemma \ref{lem:convolution estimate} can be easily obtained for
arbitrary convex cones.

\section{Stochastic equations of nonnegative processes with jumps}
\label{sec:Stochastic equations of nonnegative processes with jumps}


Let $E$, $U_0$, and $U_1$ be complete separable metric spaces. Following Dawson and Li \cite{MR2952093}, we say that the parameters $(b,\sigma,g_0,g_1)$ are \emph{admissible} if:
\begin{itemize}
\item $b(0)\geq0$ and $b(x)=b_1(x)-b_2(x)$ is defined on $\mathbb{R}_{\geq0}$, where $x\mapsto b_1(x)$ is a continuous function, and $x\mapsto b_2(x)$ is a continuous and nondecreasing function;
\item $(x,u)\mapsto\sigma(x,u)$ is a Borel function on $\mathbb{R}_{\geq0}\times E$ satisfying $\sigma(0,u)=0$ for $u\in E$;
\item $(x,u)\mapsto g_0(x,u)$ is a Borel function on $\mathbb{R}_{\geq0}\times U_0$ satisfying $g_0(0,u)=0$ and $g_0(x,u)+x\geq0$ for $x>0$ and $u\in U_0$;
\item $(x,u)\mapsto g_1(x,u)$ is a Borel function on $\mathbb{R}_{\geq0}\times U_1$ satisfying $g_1(x,u)+x\geq0$ for $x\in\mathbb{R}_{\geq0}$ and $u\in U_1$.
\end{itemize}
Let $(\Omega,\mathcal{F},(\mathcal{F}_t)_{t\geq0},\mathbb{P})$ be a filtered probability space satisfying the usual hypotheses, i.e. $(\Omega,\mathcal{F},\mathbb{P})$ is complete, the filtration $(\mathcal{F}_t)_{t\geq0}$ is right-continuous, and $\mathcal{F}_0$ contains all $\mathbb{P}$-null sets in $\mathcal{F}$.
Let $\varkappa(\mathrm{d}z)$, $\mu_0(\mathrm{d}u)$, and $\mu_1(\mathrm{d}u)$ be $\sigma$-finite measures on $E$, $U_0$, and $U_1$, respectively. Let $W(\mathrm{d}t,\mathrm{d}u)$ be a $(\mathcal{F}_t)_{t\geq0}$-Gaussian white noise on $\mathbb{R}_{\geq0}\times E$  with intensity measure $\mathrm{d}t\varkappa(\mathrm{d}z)$, and let $ N_0(\mathrm{d}t,\mathrm{d}u)$ and $N_1(\mathrm{d}t,\mathrm{d}u)$ be $(\mathcal{F}_t)_{t\geq0}$-Poisson random measures on $\mathbb{R}_{\geq0}\times U_0$ and $\mathbb{R}_{\geq0}\times U_1$ with intensities $\mu_0(\mathrm{d}u)$ and $\mu_1(\mathrm{d}u)$, respectively. Denote by $\widetilde{N}_0(\mathrm{d}t,\mathrm{d}u):=N_0(\mathrm{d}t,\mathrm{d}u)-\mathrm{d}t\mu_0(\mathrm{d}u)$ the compensated Poisson random measure of $N_0(\mathrm{d}t,\mathrm{d}u)$.
Suppose that the random objects $W(\mathrm{d}s,\mathrm{d}u)$, $N_0(\mathrm{d}s,\mathrm{d}u)$, and $N_1(\mathrm{d}s,\mathrm{d}u)$ are mutually independent.

We consider a stochastic process $\lbrace X_t\thinspace:\thinspace t\geq0\rbrace$ with state space $\mathbb{R}_{\geq0}$ determined by the stochastic equation
\begin{align}
X_t &=
X_0+\int_0^t b\left(X_s\right)\mathrm{d}s+\int_0^t\int_E\sigma\left(X_s,u\right)W\left(\mathrm{d}s,\mathrm{d}u\right) \nonumber \\
&\quad+
\int_0^t\int_{U_0}g_0\left(X_{s-},u\right)\widetilde{N}_0\left(\mathrm{d}s,\mathrm{d}u\right)+
\int_0^t\int_{U_1}g_1\left(X_{s-},u\right)N_1\left(\mathrm{d}s,\mathrm{d}u\right),\quad t\geq0,\label{eq:se}
\end{align}
where $X_0\geq0$ is $\mathcal{F}_0$-measurable.
A \emph{strong solution} to \eqref{eq:se} is, by definition,
a nonnegative c\`adl\`ag and $(\mathcal{F}_t)_{t\geq0}$-adapted\footnote{Adapted to the augmented natural filtration generated by $W(\mathrm{d}s,\mathrm{d}u)$, $N_0(\mathrm{d}s,\mathrm{d}u)$, and $N_1(\mathrm{d}s,\mathrm{d}u)$, see, e.g., Situ \cite[p.76]{MR2160585}.} process $\lbrace X_t\thinspace:\thinspace t\geq0\rbrace$ satisfying the equation \eqref{eq:se} almost surely for every $t\geq0$.
Existence and uniqueness of strong solutions to \eqref{eq:se} were studied by Dawson and Li \cite{MR2952093}, respectively, where the following conditions have been introduced:
\begin{enumerate}
\item[(3.a)] there is a constant $K\geq0$ so that
\[
|b(x)|+\int_{U_1}\left\vert g_1(x,u)\right\vert\mu_1\left(\mathrm{d}u\right)\leq K(1+x), \quad x\geq0;
\]
\item[(3.b)] for each $u\in U_1$ the function $x\mapsto g_1(x,u)+x$ is nondecreasing and for each $m\geq1$ there is a nondecreasing concave function $z\mapsto r_m(z)$ on $\mathbb{R}_{\geq0}$ such that $\int_{0+}r_m(z)^{-1}\mathrm{d}z=\infty$ and
\[
\left\vert b_1(x)-b_1(y)\right\vert+\int_{U_1}\left\vert g_1(x,u)-g_1(y,u)\right\vert\mu_1\left(\mathrm{d}u\right)
\leq r_m\left(\vert x-y\vert\right),\quad 0\leq x,\thinspace y\leq m;
\]
\item[(3.c)] for each $u\in U_0$ the function $g_0(x,u)$ is nondecreasing, and for each $m\geq1$ there is a nonnegative and nondecreasing function $z\mapsto\rho_m(z)$ on $\mathbb{R}_{\geq0}$ so that $\int_{0+}\rho_m(z)^{-2}\mathrm{d}z=\infty$ and
\begin{align*}
\int_E&\left\vert\sigma(x,u)-\sigma(y,u)\right\vert^2\varkappa\left(\mathrm{d}u\right)
+
\int_{U_0}\left(\left\vert g_0(x,u)-g_0(y,u)\right\vert\wedge \left\vert g_0(x,u)-g_0(y,u)\right\vert^2\right)\mu_0\left(\mathrm{d}u\right) \\
&\quad \leq \rho_m\left(\vert x-y\vert\right)^2,\qquad 0\leq x,\thinspace y\leq m.
\end{align*}
\end{enumerate}
The next result summarizes Theorems 2.3 and 2.5 of Dawson and Li \cite{MR2952093}.

\begin{prop}\label{prop:existence and uniqueness of strong solutions}
Suppose that $(b,\sigma,g_0,g_1)$ are admissible parameters satisfying conditions (3.a)-(3.c).
\begin{enumerate}
\item[(a)] Then, for each $\mathcal{F}_0$-measurable random variable with $X_0\geq0$ almost surely, there exists a unique strong solution $\lbrace X_t\thinspace:\thinspace t\geq0\rbrace$ to \eqref{eq:se}.
\item[(b)] Let $X_0$ and $Y_0$ be two $\mathcal{F}_0$-measurable nonnegative random variables. Denote by $\lbrace X_t\thinspace:\thinspace t\geq0\rbrace$ and $\lbrace Y_t\thinspace:\thinspace t\geq0\rbrace$ the corresponding strong solutions to \eqref{eq:se}. If $\mathbb{P}(X_0\leq Y_0)=1$, then $\mathbb{P}(X_t\leq Y_t \text{ for all }t\geq0)=1$.
\end{enumerate}
\end{prop}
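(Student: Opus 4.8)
The plan is to follow the two-step scheme of Dawson and Li \cite{MR2952093}, of which this proposition is a restatement. The overall route for part~(a) is the Yamada--Watanabe principle: \emph{weak existence} together with \emph{pathwise uniqueness} yields existence and uniqueness of strong solutions to \eqref{eq:se}. For pathwise uniqueness I would take two solutions $X$ and $Y$ driven by the \emph{same} noise with $X_0=Y_0$ a.s., fix $m\geq1$, and stop at the first time $\tau_m$ at which either process leaves $[0,m]$. Following Yamada--Watanabe, I would introduce smooth approximations $\phi_k$ of $z\mapsto|z|$ with $|\phi_k'|\leq1$ whose second derivatives are supported on shrinking intervals near the origin and dominated, using conditions~(3.b) and~(3.c), in terms of $r_m$ and $\rho_m^2$. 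Applying It\^o's formula to $\phi_k(X_{t\wedge\tau_m}-Y_{t\wedge\tau_m})$ kills the Gaussian and compensated-Poisson martingale parts in expectation; the diffusion and the $g_0$-jump contributions are then controlled by $\rho_m$ via~(3.c) (large jumps enter at linear order, small jumps at quadratic order, matching the $|{\cdot}|\wedge|{\cdot}|^2$ structure), while the drift and the $g_1$-jump contributions are controlled by $r_m$ via~(3.b). Letting $k\to\infty$ and using $\int_{0+}r_m(z)^{-1}\mathrm{d}z=\int_{0+}\rho_m(z)^{-2}\mathrm{d}z=\infty$, a Bihari-type inequality forces $\mathbb{E}\,|X_{t\wedge\tau_m}-Y_{t\wedge\tau_m}|=0$; finally $\tau_m\uparrow\infty$ by the linear growth bound~(3.a), giving $X\equiv Y$.

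For weak existence I would truncate $b,\sigma,g_0,g_1$ to globally Lipschitz coefficients agreeing with the originals on $[0,n]$ and still respecting the boundary/nonnegativity part of admissibility, solve the truncated equations by classical theory, and pass to the limit: condition~(3.a) yields uniform moment bounds, hence tightness of the approximating laws in Skorokhod space, and the admissibility assumptions ($\sigma(0,u)=0$, $g_0(0,u)=0$, $g_0(x,u)+x\geq0$ for $x>0$, $g_1(x,u)+x\geq0$) ensure the limit is nonnegative and solves \eqref{eq:se}. Combining with pathwise uniqueness gives part~(a).

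For the comparison statement~(b) I would again use the synchronous coupling: let $X,Y$ solve \eqref{eq:se} with the same $W,N_0,N_1$ and $X_0\leq Y_0$ a.s. Mollifying $z\mapsto z^{+}$ by functions $\psi_k$ as above and applying It\^o's formula to $\psi_k(X_{t\wedge\tau_m}-Y_{t\wedge\tau_m})$, the crucial point is that every error term has the favourable sign: monotonicity of $x\mapsto g_1(x,u)+x$ and of $x\mapsto g_0(x,u)$ means a jump cannot create a crossing, i.e.\ on $\{X_{s-}\leq Y_{s-}\}$ one has $X_{s-}+g_i(X_{s-},u)\leq Y_{s-}+g_i(Y_{s-},u)$ for $i=0,1$; monotonicity of $b_2$ together with continuity of $b_1$ bounds the drift difference by $r_m(|X_s-Y_s|)$ on $\{X_s>Y_s\}$; and $\sigma(0,u)=0$ handles the diffusion term. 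A Bihari argument then gives $\mathbb{E}(X_{t\wedge\tau_m}-Y_{t\wedge\tau_m})^{+}=0$ for each fixed $t$, hence $X_t\leq Y_t$ a.s.\ on $\{t\le\tau_m\}$; letting $m\to\infty$ and using right-continuity of the paths on a countable dense set upgrades this to $\mathbb{P}(X_t\leq Y_t\text{ for all }t\geq0)=1$.

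The main obstacle, and the reason one cannot simply invoke classical SDE theory, is that the coefficients are only of H\"older/Osgood type rather than Lipschitz: contraction arguments fail, which is exactly why the Yamada--Watanabe mollification together with Bihari's (not Gronwall's) inequality is needed, and why one must carefully split the $|g_0(x,u)-g_0(y,u)|\wedge|g_0(x,u)-g_0(y,u)|^2$ jump term and balance it against the support and size of $\phi_k''$ so as to land precisely in the estimate governed by $\rho_m$. I would refer to \cite{MR2952093} for the complete proof.
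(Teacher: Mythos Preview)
Your proposal is correct and in fact goes beyond what the paper does: the paper gives no proof at all for this proposition, merely stating that it ``summarizes Theorems 2.3 and 2.5 of Dawson and Li \cite{MR2952093}''. Your sketch of the Yamada--Watanabe mollification combined with a Bihari-type inequality is an accurate outline of the argument in that reference, and your closing deferral to \cite{MR2952093} matches the paper's treatment exactly.
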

In the following, we write $\lbrace X_t^x\thinspace:\thinspace t\geq0\rbrace$ for the unique strong solution of \eqref{eq:se} to indicate that the process $X_t$ starts with initial variable $X_0=x\geq0$ almost surely.
Denote by $\mathcal{B}_b(\mathbb{R}_{\geq0})$ the Banach space of all real-valued, bounded and Borel-measurable functions on $\mathbb{R}_{\geq0}$ endowed with the norm $\Vert f\Vert_{\infty}:=\sup_{x\in\mathbb{R}_{\geq0}}\vert f(x)\vert$.
Arguinig as in \cite{LI2018} (see the proof of Theorem 1.1 therein) one can show that
$\lbrace X_t^x\thinspace:\thinspace t\geq0\rbrace$ is a strong $(\mathcal{F}_t)_{t\geq0}$-Markov process and has transition probabilities $P_t(x,\mathrm{d}y)$, i.e., it holds
\[
P_tf(x):=\mathbb{E}\left[f\left(X_t^x\right)\right]=\int_{\mathbb{R}_{\geq0}} f(y)P_t\left(x,\mathrm{d}y\right),\quad f\in\mathcal{B}_b(\mathbb{R}_{\geq0}).
\]
Applying It\^o's formula one finds that $\lbrace X_t^x\thinspace:\thinspace t\geq0\rbrace$ solves the local martingale problem with generator $L$ given by \eqref{eq:gen general case} and domain $C_c^2(\mathbb{R}_{\geq0})$.
The adjoint transition semigroup on $\mathcal{P}(\mathbb{R}_{\geq0})$ is defined by
\[
P_t^{\ast}\varrho\left(\mathrm{d}y\right)=\int_{\mathbb{R}_{\geq0}}P_t\left(x,\mathrm{d}y\right)\varrho\left(\mathrm{d}x\right),\quad t\geq0,\thinspace\varrho\in\mathcal{P}\left(\mathbb{R}_{\geq0}\right).
\]
By the Markov property we have that $P_{t+s}^{\ast}=P_t^{\ast}P_s^{\ast}$ for all $0\leq s\leq t$.
Let us formulate the following conditions in addition to (2.a)-(2.c):
\begin{enumerate}
\item[(3.d)] it holds that $\lbrace M_t\thinspace:\thinspace t\geq0\rbrace$ defined by
\[
M_t=\int_0^t\int_E \sigma\left(X_s^x,u\right)W\left(\mathrm{d}s,\mathrm{d}u\right)+\int_0^t\int_{U_0} g_0\left(X_{s-}^x,u\right)\widetilde{N}_0\left(\mathrm{d}s,\mathrm{d}u\right),\quad t\geq0,
\]
is a martingale with respect to the filtration $(\mathcal{F}_t)_{t\geq0}$;
\item[(3.e)] there exists a constant $A>0$ such that, for $\widetilde{b}(x):=b(x)-\int_{U_1}g_1(x,u)\mu_1(\mathrm{d}u)$, we have
\[
\widetilde{b}(y)-\widetilde{b}(x)\leq-A(y-x),\quad 0\leq x\leq y.
\]
\end{enumerate}
Under the given conditions (3.a)-(3.e) we are able to show
that the corresponding Markov process is exponentially ergodic
in the Wasserstein distance $W_1$.

\begin{thm}\label{thm:exponential ergodicity}
Let $(b,\sigma,g_0,g_1)$ be admissible parameters satisfying conditions (3.a)-(3.e). Then, for all $\varrho,\thinspace\widetilde{\varrho}\in\mathcal{P}_1(\mathbb{R}_{\geq0})$, we have
\begin{equation}
W_1\left(P_t^{\ast}\varrho,P_t^{\ast}\widetilde{\varrho}\right)\leq \mathrm{e}^{-At} W_1\left(\varrho,\widetilde{\varrho}\right),\quad t\geq0.\label{eq:exp ergodicity}
\end{equation}
In particular, there exists a unique invariant distribution
$\pi\in\mathcal{P}(\mathbb{R}_{\geq0})$.
Moreover, we have $\pi\in\mathcal{P}_1(\mathbb{R}_{\geq0})$ and, for all $\varrho\in\mathcal{P}_1(\mathbb{R}_{\geq0})$,
\begin{align*}
W_1\left(P_t^{\ast}\varrho,\pi\right)\leq \mathrm{e}^{-At}W_1\left(\varrho,\pi\right),\quad t\geq0.
\end{align*}
\end{thm}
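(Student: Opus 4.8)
The plan is to convert the comparison principle of Proposition~\ref{prop:existence and uniqueness of strong solutions}(b) together with the dissipativity assumption (3.e) into a pathwise contraction estimate for solutions of \eqref{eq:se}, and then to lift it to $W_1$ by the convexity estimate of Lemma~\ref{lem:convexity of wasserstein distances}. As a preliminary I would record the moment bound $\sup_{s\le t}\mathbb{E}[X_s^x]\le C_t(1+x)$ for every $t\ge 0$ and $x\ge 0$: starting from \eqref{eq:se}, one localizes with $\tau_n=\inf\{s\ge 0:X_s^x\ge n\}$, uses condition (3.d) to discard the stopped stochastic integrals against $W$ and $\widetilde N_0$ in expectation, compensates the $N_1$-integral, applies the linear growth bound (3.a), and concludes with Gronwall's lemma and Fatou's lemma as $n\to\infty$. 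This also yields non-explosion ($\tau_n\uparrow\infty$ a.s.) and shows that $P_t^{\ast}$ maps $\mathcal{P}_1(\mathbb{R}_{\geq 0})$ into itself.

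For the pathwise contraction I would fix $0\le x\le y$ and consider the solutions $\{X_t^x\}$, $\{X_t^y\}$ driven by the same noises $W,N_0,N_1$. By Proposition~\ref{prop:existence and uniqueness of strong solutions}(b) one has $X_t^x\le X_t^y$ almost surely, hence $Z_t:=|X_t^y-X_t^x|=X_t^y-X_t^x\ge 0$. Subtracting the two instances of \eqref{eq:se}, splitting the $N_1$-integral into its compensated martingale part and the drift $\int_0^t\int_{U_1}(g_1(X_s^y,u)-g_1(X_s^x,u))\,\mu_1(\mathrm{d}u)\,\mathrm{d}s$, and taking expectations — the martingale terms vanishing by (3.d) and the integrability being guaranteed by the moment bound and (3.a) — I would obtain
\[
\mathbb{E}[Z_t]=(y-x)+\int_0^t\mathbb{E}\big[\widetilde b(X_s^y)-\widetilde b(X_s^x)\big]\,\mathrm{d}s\le (y-x)-A\int_0^t\mathbb{E}[Z_s]\,\mathrm{d}s,
\]
using $X_s^x\le X_s^y$ and (3.e) in the inequality. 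Since $s\mapsto\mathbb{E}[Z_s]$ is locally bounded, Gronwall's lemma gives $\mathbb{E}[|X_t^y-X_t^x|]\le\mathrm{e}^{-At}(y-x)$, and by symmetry $\mathbb{E}[|X_t^x-X_t^y|]\le\mathrm{e}^{-At}|x-y|$ for all $x,y\ge 0$. I expect this to be the main obstacle: the delicate points are the localization needed to exchange expectation and time-integration, the verification that the stochastic integrals in play are genuine (not merely local) martingales, and the careful bookkeeping of the jump drift so that (3.e) can be invoked.

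To pass to measures, note that $(X_t^x,X_t^y)$ is a coupling of $(P_t(x,\cdot),P_t(y,\cdot))$, so the pathwise bound gives $W_1(P_t(x,\cdot),P_t(y,\cdot))\le\mathrm{e}^{-At}|x-y|$; inserting this into Lemma~\ref{lem:convexity of wasserstein distances} and minimizing over couplings $H\in\mathcal{H}(\varrho,\widetilde\varrho)$ of $\varrho,\widetilde\varrho\in\mathcal{P}_1(\mathbb{R}_{\geq 0})$ yields \eqref{eq:exp ergodicity}. For the invariant distribution I would fix $t_0>0$; by \eqref{eq:exp ergodicity} the map $P_{t_0}^{\ast}$ is a strict contraction on the complete metric space $(\mathcal{P}_1(\mathbb{R}_{\geq 0}),W_1)$ (recall Section~\ref{sec:wasserstein distances}), so the Banach fixed point theorem provides a unique fixed point $\pi\in\mathcal{P}_1(\mathbb{R}_{\geq 0})$. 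Since $P_{t_0}^{\ast}(P_s^{\ast}\pi)=P_s^{\ast}(P_{t_0}^{\ast}\pi)=P_s^{\ast}\pi$ and $P_s^{\ast}\pi\in\mathcal{P}_1(\mathbb{R}_{\geq 0})$, uniqueness of the fixed point forces $P_s^{\ast}\pi=\pi$ for all $s\ge 0$, so $\pi$ is invariant and does not depend on $t_0$; the final estimate is then \eqref{eq:exp ergodicity} with $\widetilde\varrho=\pi$. Finally, to obtain uniqueness within all of $\mathcal{P}(\mathbb{R}_{\geq 0})$, I would apply \eqref{eq:exp ergodicity} with $\varrho=\delta_x$ to get $P_t(x,\cdot)\to\pi$ weakly for every $x$; if $\pi'$ is any invariant distribution and $f$ is bounded continuous, dominated convergence in $\int_{\mathbb{R}_{\geq 0}}P_tf(x)\,\pi'(\mathrm{d}x)=\int_{\mathbb{R}_{\geq 0}}f\,\mathrm{d}\pi'$ gives $\int f\,\mathrm{d}\pi=\int f\,\mathrm{d}\pi'$, whence $\pi'=\pi$.
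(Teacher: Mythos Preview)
Your argument is correct and follows the paper's approach: the comparison principle together with the dissipativity assumption (3.e) yields the pathwise contraction $\mathbb{E}[|X_t^x-X_t^y|]\le\mathrm{e}^{-At}|x-y|$ via Gronwall, and Lemma~\ref{lem:convexity of wasserstein distances} then lifts this to \eqref{eq:exp ergodicity}. For the existence and uniqueness of $\pi$ you use the Banach fixed point theorem on $(\mathcal{P}_1,W_1)$ and a weak-convergence argument, whereas the paper proves directly that $(P_k^{\ast}\varrho)_{k\in\mathbb{N}}$ is Cauchy and handles uniqueness in $\mathcal{P}(\mathbb{R}_{\geq 0})$ via the bounded Wasserstein distance with $d(x,y)=1\wedge|x-y|$; both routes are standard and equally valid.
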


\begin{proof}
The proof is divided into several steps.

\textit{Step 1:} Let $\lbrace X_t^x\thinspace:\thinspace t\geq0\rbrace$ and $\lbrace X_t^y\thinspace:\thinspace t\geq0\rbrace$ be strong solutions of \eqref{eq:se} with $0\leq x\leq y$.
Using Proposition \ref{prop:existence and uniqueness of strong solutions} (b) together with (2.d) and (2.e), we obtain
\begin{align*}
\mathbb{E}\left[\left\vert X_t^x-X_t^y\right\vert\right]
&=
\mathbb{E}\left[X_t^y\right]-\mathbb{E}\left[X_t^x\right] \\
&=
y-x+\int_0^t\mathbb{E}\left[\widetilde{b}\left(X_s^y\right)-\widetilde{b}\left(X_s^x\right)\right]\mathrm{d}s \\
&\leq
\vert x-y\vert-A\int_0^t\mathbb{E}\left[\left\vert X_s^x-X_s^y\right\vert\right]\mathrm{d}s,\quad t\geq0.
\end{align*}
Applying Gronwall's lemma yields
\begin{equation}
\mathbb{E}\left[\left\vert X_t^x-X_t^y\right\vert\right]\leq\vert x-y\vert\mathrm{e}^{-At},\quad t\geq0.\label{eq:moment estimate}
\end{equation}

\textit{Step 2:} Let us prove \eqref{eq:exp ergodicity}. We denote by $\delta_x$ and $\delta_y$ the Dirac measure concentrated in $x$ and $y$, respectively. Assume first $0\leq x\leq y$. Since the joint distribution of $(X_t^x,X_t^y)$ belongs to $\mathcal{H}(P_t^{\ast}\delta_x,P_t^{\ast}\delta_y)$, we obtain from \eqref{eq:moment estimate},
\begin{equation}
W_1\left(P_t^{\ast}\delta_x,P_t^{\ast}\delta_y\right)
\leq
\mathbb{E}\left[\left\vert X_t^x-X_t^y\right\vert\right]
\leq
\vert x-y\vert\mathrm{e}^{-At},\quad t\geq0. \label{eq:exp ergodicity with dirac measure}
\end{equation}
Let now $H$ be any coupling of $(\varrho,\widetilde{\varrho})$
satisfying \eqref{EQ:INFATTAINED}.
Using the convexity of $W_1$ and \eqref{eq:exp ergodicity with dirac measure}, we get
\begin{align*}
W_1\left(P_t^{\ast}\varrho,P_t^{\ast}\widetilde{\varrho}\right)
&\leq
\int_{\mathbb{R}_{\geq0}\times\mathbb{R}_{\geq0}} W_1\left(P_t^{\ast}\delta_x,P_t^{\ast}\delta_y\right)H\left(\mathrm{d}x,\mathrm{d}y\right) \\
&\leq
\mathrm{e}^{-At}\int_{\mathbb{R}_{\geq0}\times\mathbb{R}_{\geq0}} \left\vert x-y\right\vert H\left(\mathrm{d}x,\mathrm{d}y\right)
= e^{-At}W_1(\varrho, \widetilde{\varrho}).
\end{align*}

\textit{Step 3:} We prove existence of $\pi$. We fix any $\varrho\in\mathcal{P}_1(\mathbb{R}_{\geq0})$. Then, for $k,l\in\mathbb{N}$ with $k<l$,
\[
W_1\left(P_k^{\ast}\varrho,P_l^{\ast}\varrho\right)
\leq
\sum_{s=k}^{l-1}W_1\left(P_{s+1}^{\ast}\varrho,P_s^{\ast}\varrho\right)
\leq
\sum_{s=k}^{l-1}\mathrm{e}^{-As}W_1\left(P_1^{\ast}\varrho,\varrho\right),
\]
where we have used the semigroup property of $P_{s+1}^{\ast}$ together with \eqref{eq:exp ergodicity}. Since the right-hand side tends to zero as $k,l\to\infty$,  $(P_k^{\ast}\varrho)_{k\in\mathbb{N}}\subset\mathcal{P}_1(\mathbb{R}_{\geq0})$ is a Cauchy sequence. As a consequence, there exists $\pi\in\mathcal{P}_1(\mathbb{R}_{\geq0})$ such that $W_1(P_k^{\ast}\varrho,\pi)$ converges to zero as $k\to\infty$.

We proceed to show invariance of $\pi$, i.e. $P_h^{\ast}\pi=\pi$ for all $h>0$. Fix $h>0$. Using the semigroup property and \eqref{eq:exp ergodicity} it follows
\begin{align*}
W_1\left(P_h^{\ast}\pi,\pi\right)
&\leq
W_1\left(P_h^{\ast}\pi,P_h^{\ast}P_k^{\ast}\varrho\right) + W_1\left(P_k^{\ast}P_h^{\ast}\varrho,P_k^{\ast}\varrho\right) + W_1\left(P_k^{\ast}\varrho,\pi\right) \\
&\leq
\mathrm{e}^{-A h}W_1\left(\pi,P_k^{\ast}\varrho\right) + \mathrm{e}^{-Ak}W_1\left( P_h^{\ast}\varrho,\varrho\right) + W_1\left( P_k^{\ast}\varrho,\pi\right),
\end{align*}
and the right-hand side tends to zero as $k\to\infty$. Hence, we see that $W_1(P_h^{\ast}\pi,\pi)=0$.

\textit{Step 4, uniqueness of $\pi$:} Let $\widehat{\pi}\in\mathcal{P}(\mathbb{R}_{\geq0})$ be any invariant distribution. Let $W_1^{\leq 1}$ be the Wasserstein distance given by \eqref{WASSERSTEIN} with $d(x,y) = 1 \wedge |x-y|$.
Using the invariance of $\pi$, $\widehat{\pi}$, and the convexity of $W_1^{\leq 1}$, for any $H\in\mathcal{H}(\pi,\widehat{\pi})$, we derive
\begin{align*}
W_1^{\leq1}\left(\pi,\widehat{\pi}\right)
&\leq
\int_{\mathbb{R}_{\geq0}\times\mathbb{R}_{\geq0}} W_1^{\leq1}\left(P_t^{\ast}\delta_x,P_t^{\ast}\delta_y\right) H\left(\mathrm{d}x,\mathrm{d}y\right) \\
&\leq
\int_{\mathbb{R}_{\geq0}\times\mathbb{R}_{\geq0}}\mathbb{E}\left[\left\vert X_t^x-X_t^y\right\vert\wedge1\right] H\left(\mathrm{d}x,\mathrm{d}y\right)  \\
&\leq
\int_{\mathbb{R}_{\geq0}\times\mathbb{R}_{\geq0}}\left(1\wedge \mathbb{E}\left[\left\vert X_t^x-X_t^y\right\vert\right]\right) H\left(\mathrm{d}x,\mathrm{d}y\right)
\\ &\leq \int_{\mathbb{R}_{\geq0}\times\mathbb{R}_{\geq0}} \left(1\wedge \left(\vert x-y\vert\mathrm{e}^{-At}\right)\right) H\left(\mathrm{d}x,\mathrm{d}y\right),
\end{align*}
where the last inequality follows from \eqref{eq:moment estimate}.
By dominated convergence we see that the right-hand side vanishes as $t\to\infty$. Consequently, $W_1^{\leq1}(\pi,\widehat{\pi})=0$ which implies that $\pi=\widehat{\pi}$. The proof is complete.
\end{proof}

Combining the characterization of convergence with respect to $W_1$ and
\cite[Theorem 1.2 and Corollary 1.3]{MR3684455}, we deduce the following.

\begin{cor}
Suppose that the same conditions as in Theorem \ref{thm:exponential ergodicity} are satisfied. Let $\lbrace X_t\thinspace:\thinspace t \geq 0\rbrace$ be the corresponding unique solution to \eqref{eq:se}. Then the following assertions hold:
\begin{enumerate}
 \item[(a)] $\lim_{t \to \infty}\mathbb{E}[X_t^x] = \int_{\mathbb{R}_{\geq 0}}x \pi(\mathrm{d}x).$
 \item[(b)] For any $p \in [1,\infty)$ and
 $f \in L^p(\mathbb{R}_{\geq 0}, \pi)$, we have
\[
\frac{1}{T}\int_0^T f(X_t)\mathrm{d}t\to\int_0^{\infty}f(x)\pi\left(\mathrm{d}x\right),\quad T\to\infty,
\]
in $L^p(\Omega,\mathcal{F},\mathbb{P})$.
\end{enumerate}
\end{cor}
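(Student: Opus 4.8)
The plan is to obtain both assertions from the exponential $W_1$-ergodicity of Theorem \ref{thm:exponential ergodicity}, supplemented for part (b) by the cited ergodic theorem. For (a), since $\delta_x\in\mathcal{P}_1(\mathbb{R}_{\geq0})$, Theorem \ref{thm:exponential ergodicity} gives $W_1(P_t^{\ast}\delta_x,\pi)\le \mathrm{e}^{-At}W_1(\delta_x,\pi)\to 0$ as $t\to\infty$; in particular $P_t^{\ast}\delta_x\in\mathcal{P}_1(\mathbb{R}_{\geq0})$, so $\mathbb{E}[X_t^x]=\int_{\mathbb{R}_{\geq0}}y\,P_t^{\ast}\delta_x(\mathrm{d}y)$ is finite. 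Since $h(y)=y$ is $1$-Lipschitz for $d_1$, the Kantorovich--Rubinstein duality (already used in the proof of Lemma \ref{lem:convolution estimate}) yields
\[
\Bigl| \mathbb{E}[X_t^x]-\int_{\mathbb{R}_{\geq0}}y\,\pi(\mathrm{d}y)\Bigr|\le W_1(P_t^{\ast}\delta_x,\pi)\le \mathrm{e}^{-At}W_1(\delta_x,\pi),
\]
and letting $t\to\infty$ proves (a), even with the explicit rate $\mathrm{e}^{-At}$.

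For (b), I would invoke \cite[Theorem 1.2 and Corollary 1.3]{MR3684455}, which asserts that for a c\`adl\`ag Markov process on a Polish space admitting a unique invariant probability measure $\pi$ whose transition kernels converge weakly to $\pi$, the time averages $T^{-1}\int_0^T f(X_t)\,\mathrm{d}t$ converge in $L^p(\Omega,\mathcal{F},\mathbb{P})$ to $\int_0^{\infty}f(x)\pi(\mathrm{d}x)$ for every $f\in L^p(\mathbb{R}_{\geq0},\pi)$ and $p\in[1,\infty)$. It then remains to verify the hypotheses: $\{X_t : t\ge0\}$ is c\`adl\`ag by the definition of a strong solution to \eqref{eq:se} and is a strong Markov process with kernel $P_t(x,\mathrm{d}y)$, as recorded after Proposition \ref{prop:existence and uniqueness of strong solutions}; the state space $\mathbb{R}_{\geq0}$ is Polish; Theorem \ref{thm:exponential ergodicity} supplies the unique invariant measure $\pi$; and the estimate from (a) shows $P_t(x,\cdot)\to\pi$ weakly for every $x\ge0$, which by dominated convergence extends to $P_t^{\ast}\varrho\to\pi$ weakly for an arbitrary initial law $\varrho$, hence for any $\mathcal{F}_0$-measurable $X_0$. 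Applying the cited theorem then yields (b).

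The only delicate point is the bookkeeping around \cite[Theorem 1.2 and Corollary 1.3]{MR3684455}: one must confirm that weak convergence of the transition kernels — which is all that the $W_1$-bound provides, with no total-variation rate — is enough there, and that its conclusion is stated for general, not merely stationary, initial conditions. Once the hypotheses are matched, both parts follow immediately from results already established, so no genuine obstacle remains.
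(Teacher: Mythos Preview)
Your proposal is correct and follows essentially the same approach as the paper: the paper's entire proof is the single sentence ``Combining the characterization of convergence with respect to $W_1$ and \cite[Theorem 1.2 and Corollary 1.3]{MR3684455}, we deduce the following,'' and you have simply unpacked what this means --- using that $W_1$-convergence implies convergence of first moments for (a), and invoking the cited ergodic theorem for (b). Your caveat about checking the precise hypotheses of the Sandri\'{c} result is prudent, but there is no substantive difference in strategy.
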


\section{Continuous-state nonlinear branching processes with immigration}
\label{sec:CNB}

In this section we constitute an application of Theorem \ref{thm:exponential ergodicity} to the Markov process with generator \eqref{EQ:NONLINEARCBI}.
For this purpose, we first provide a construction of this process as a strong solution to a certain stochastic equation.

Let $(\Omega, \mathcal{F}, (\mathcal{F}_t)_{t \geq 0}, \mathbb{P})$ be a stochastic basis with the usual conditions rich enough to support a $(\mathcal{F}_t)_{t \geq 0}$-Gaussian white noise
$W(\mathrm{d}t,\mathrm{d}u)$ with intensity measure $\mathrm{d}t\mathrm{d}u$ on $\mathbb{R}_{\geq0}\times\mathbb{R}_{\geq0}$ and $(\mathcal{F}_t)_{t \geq 0}$-Poisson random measures
$N_0(\mathrm{d}t,\mathrm{d}z,\mathrm{d}u)$ and $N_1(\mathrm{d}t,\mathrm{d}z)$
with intensity measures $\mathrm{d}tm(\mathrm{d}z)\mathrm{d}u$ on $\mathbb{R}_{\geq0}\times\mathbb{R}_{\geq0}\times\mathbb{R}_{\geq0}$  and $\mathrm{d}t\nu(\mathrm{d}z)$ on $\mathbb{R}_{\geq0}\times\mathbb{R}_{\geq0}$, where $m$ and $\nu$ satisfy \eqref{eq:measure conditions}.
Further, suppose that $W(\mathrm{d}t, \mathrm{d}u)$,
$N_0(\mathrm{d}t, \mathrm{d}z, \mathrm{d}u)$, and $N_1(\mathrm{d}t, \mathrm{d}z)$ are mutually independent.
Denote by
$\widetilde{N}_0(\mathrm{d}t,\mathrm{d}z,\mathrm{d}u)
= N_0(\mathrm{d}t, \mathrm{d}z,\mathrm{d}u)-\mathrm{d}t \mathrm{d}m(\mathrm{d}u)$ the corresponding compensated Poisson random measure.
Below we provide reasonable conditions on $\gamma_0, \thinspace \gamma_1$, and $\gamma_2$
such that
\begin{align}
X_t &=
X_0 + \int_0^t \gamma_0\left(X_s\right)\mathrm{d}s +\int_0^t\int_0^{\infty}\mathbbm{1}_{\lbrace u\leq \gamma_1(X_{s})\rbrace}W\left(\mathrm{d}s,\mathrm{d}u\right) \nonumber \\
&\quad+
\int_0^t\int_0^{\infty}\int_0^{\infty}\mathbbm{1}_{\lbrace u\leq \gamma_2(X_{s-})\rbrace}z\widetilde{N}_0\left(\mathrm{d}s,\mathrm{d}z,\mathrm{d}u\right)
+\int_0^t\int_0^{\infty}zN_1\left(\mathrm{d}s,\mathrm{d}z\right),\label{eq:nonlinear CBI se}
\end{align}
has a pathwise unique strong solution so that the results of Section \ref{sec:Stochastic equations of nonnegative processes with jumps} are applicable.

\begin{thm}\label{thm:pathwise uniqueness strong solution of nonlinear cbi}
Suppose that the functions $\gamma_i$, $i=0,1,2$, satisfy the following:
\begin{enumerate}
\item[(i)] $\gamma_0(0)\geq0$, $\gamma_1,\thinspace\gamma_2\geq0$, and $\gamma_2$ is nondecreasing;
\item[(ii)] there exists a constant $K\geq0$ such that
$\vert\gamma_0(x)\vert\leq K(1+x)$ for all $x \geq 0;$
\item[(iii)] for each $m\geq1$ there exists a constant $c_m>0$ such that,
for all $0 \leq x,y \leq m$,
\[
\left\vert \gamma_0(x)-\gamma_0(y)\right\vert+\left\vert\gamma_1(x)-\gamma_1(y)\right\vert
+\left\vert\gamma_2(x)-\gamma_2(y)\right\vert \leq c_m\vert x-y\vert.
\]
\end{enumerate}
Then, for any $\mathcal{F}_0$-measurable and nonnegative
initial value $X_0$, there exists a unique strong solution $\lbrace X_t\thinspace:\thinspace t\geq0\rbrace$ to \eqref{eq:nonlinear CBI se}.
\end{thm}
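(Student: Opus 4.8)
The plan is to recognize equation \eqref{eq:nonlinear CBI se} as a particular instance of the general stochastic equation \eqref{eq:se} and then to invoke Proposition \ref{prop:existence and uniqueness of strong solutions}(a). The one structural point obstructing an immediate matching is the immigration term: since \eqref{eq:measure conditions} only guarantees $\int_{\mathbb{R}_{\geq0}}(1\wedge z)\,\nu(\mathrm{d}z)<\infty$, and not $\int_{\mathbb{R}_{\geq0}}z\,\nu(\mathrm{d}z)<\infty$, the uncompensated integral $\int_0^t\int_0^\infty z\,N_1(\mathrm{d}s,\mathrm{d}z)$ need not satisfy the linear growth bound (3.a). I would therefore first split $N_1=N_1|_{(0,1]}+N_1|_{(1,\infty)}$: the ``large'' part $N_1|_{(1,\infty)}$ is a Poisson random measure with finite intensity $\nu((1,\infty))<\infty$, hence has only finitely many atoms on any bounded time interval, whereas the small jumps obey $\int_{(0,1]}z\,\nu(\mathrm{d}z)<\infty$.

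For the equation driven only by $W$, $N_0$ and $N_1|_{(0,1]}$ I would match \eqref{eq:se} by taking $b=\gamma_0$; $E=\mathbb{R}_{\geq0}$ with $\varkappa(\mathrm{d}u)=\mathrm{d}u$ and $\sigma(x,u)=\mathbbm{1}_{\{u\leq\gamma_1(x)\}}$, so that $\int_E\sigma(x,u)^2\varkappa(\mathrm{d}u)=\gamma_1(x)$ reproduces the diffusion coefficient; $U_0=\mathbb{R}_{\geq0}\times\mathbb{R}_{\geq0}$ with $\mu_0(\mathrm{d}z,\mathrm{d}u)=m(\mathrm{d}z)\,\mathrm{d}u$ and $g_0(x,(z,u))=z\,\mathbbm{1}_{\{u\leq\gamma_2(x)\}}$; and $U_1=(0,1]$ with $\mu_1=\nu|_{(0,1]}$ and $g_1(x,z)=z$. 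Admissibility of $(b,\sigma,g_0,g_1)$ is read off from (i) (write $b_1=\gamma_0$, $b_2\equiv0$, and note $g_0,g_1\geq0$). The substance of this step is the verification of (3.a)--(3.c): (3.a) follows from $|\gamma_0(x)|\leq K(1+x)$ in (ii) together with $\int_{(0,1]}z\,\nu(\mathrm{d}z)<\infty$; for (3.b), $x\mapsto g_1(x,z)+x=z+x$ is nondecreasing and the relevant modulus is $|\gamma_0(x)-\gamma_0(y)|\leq c_m|x-y|$ by (iii), so $r_m$ may be taken linear; for (3.c), the key point is that $x\mapsto g_0(x,(z,u))=z\,\mathbbm{1}_{\{u\leq\gamma_2(x)\}}$ is nondecreasing because $\gamma_2$ is nondecreasing, and one computes $\int_E|\sigma(x,u)-\sigma(y,u)|^2\varkappa(\mathrm{d}u)=|\gamma_1(x)-\gamma_1(y)|$ and $\int_{U_0}(|g_0(x,\cdot)-g_0(y,\cdot)|\wedge|g_0(x,\cdot)-g_0(y,\cdot)|^2)\,\mu_0=|\gamma_2(x)-\gamma_2(y)|\int_0^\infty(z\wedge z^2)m(\mathrm{d}z)$, both bounded by a constant times $|x-y|$ on $[0,m]^2$ via (iii) and \eqref{eq:measure conditions}, so $\rho_m$ may be taken of the form $z\mapsto\sqrt{C_m z}$. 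Proposition \ref{prop:existence and uniqueness of strong solutions}(a) then yields, for every nonnegative $\mathcal{F}_0$-measurable initial value, a unique strong solution of this reduced equation.

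It then remains to reinstate the large immigration jumps by an interlacing argument. Denoting by $0<\tau_1<\tau_2<\cdots$ with $\tau_k\uparrow\infty$ the atoms of $N_1|_{(1,\infty)}$ and by $\zeta_k>1$ the associated marks, I would build the solution recursively on the random intervals $[\tau_k,\tau_{k+1})$: on each such interval it is the solution of the reduced equation started from the (nonnegative, $\mathcal{F}_{\tau_k}$-measurable) value $X_{\tau_k}:=X_{\tau_k-}+\zeta_k$. Concatenating these pieces produces a nonnegative c\`adl\`ag adapted process solving \eqref{eq:nonlinear CBI se}, and pathwise uniqueness follows because the large jumps are deterministically located by $N_1|_{(1,\infty)}$ while the solution between consecutive $\tau_k$ is unique by Proposition \ref{prop:existence and uniqueness of strong solutions}; no explosion occurs since only finitely many $\tau_k$ lie in any finite time interval.

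I expect the computations in (3.a)--(3.c) to be entirely routine. The step demanding genuine care is the jump-splitting together with the interlacing: one has to check that the concatenated process is again a strong solution of the \emph{full} equation \eqref{eq:nonlinear CBI se} on all of $\mathbb{R}_{\geq0}$, that it is adapted to the appropriate augmented filtration, and that the construction is insensitive to $\nu$ possibly having an infinite first moment.
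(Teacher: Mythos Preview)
Your argument is correct and in fact slightly more careful than the paper's. The paper applies Proposition \ref{prop:existence and uniqueness of strong solutions} in a single stroke with $U_1=\mathbb{R}_{\geq0}$, $\mu_1=\nu$ and $g_1(x,z)=z$, asserting that (3.a) and (3.b) are ``easy to see'' and then verifying (3.c) exactly as you do (same computation, same $\rho_m(z)=c_m\sqrt{z}$). However, as you correctly observed, with this choice condition (3.a) reads $|\gamma_0(x)|+\int_0^\infty z\,\nu(\mathrm{d}z)\leq K(1+x)$, which requires $\int_{\{z>1\}}z\,\nu(\mathrm{d}z)<\infty$ --- a hypothesis not implied by \eqref{eq:measure conditions}. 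The paper glosses over this point, perhaps because the subsequent ergodicity theorem (Theorem \ref{thm:exponential ergodicity in wassterstein distance for nonlinear cbi}) imposes a stronger moment condition on $\nu$ anyway. Your splitting $\nu=\nu|_{(0,1]}+\nu|_{(1,\infty)}$ followed by the standard interlacing argument is the right way to obtain the result under the minimal assumption \eqref{eq:measure conditions}; the interlacing is routine since $\nu((1,\infty))<\infty$ gives only finitely many large jumps on compacts, and between them the reduced equation is covered by Proposition \ref{prop:existence and uniqueness of strong solutions} exactly as you outline.
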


\begin{proof}
We are going to apply Proposition \ref{prop:existence and uniqueness of strong solutions} with the following choices:
\begin{itemize}
\item $E= \mathbb{R}_{\geq0}$, $U_0=\mathbb{R}_{\geq0}\times\mathbb{R}_{\geq0}$, $U_1 = \mathbb{R}_{\geq 0}$;
\item $b(x)=b_1(x)=\gamma_0(x)$, $\sigma(x,u)=\mathbbm{1}_{\lbrace u\leq\gamma_1(x)\rbrace}$, $g_0(x,z,u)=z\mathbbm{1}_{\lbrace u\leq\gamma_2(x)\rbrace}$, $g_1(x,z) = z$;
\item $\varkappa(\mathrm{d}u)=\mathrm{d}u$, $\mu_0(\mathrm{d}z,\mathrm{d}u)=m(\mathrm{d}z)\mathrm{d}u$, $\mu_1(\mathrm{d}u)=\nu(\mathrm{d}u)$;
\end{itemize}
Now, it is easy to see that conditions (3.a) and (3.b) are satisfied.
We turn to check condition (3.c). Define $l_0(x,y,u):=\mathbbm{1}_{\lbrace u\leq \gamma_2(x)\rbrace}-\mathbbm{1}_{\lbrace u\leq \gamma_2(y)\rbrace}$. For each $m\geq1$, we estimate
\begin{align*}
\int_0^{\infty} \left\vert \mathbbm{1}_{\lbrace u\leq \gamma_1(x)\rbrace}-\mathbbm{1}_{\lbrace u\leq \gamma_1(y)\rbrace}\right\vert^2\mathrm{d}u
&+
\int_0^{\infty}\int_0^{\infty}\left(\left\vert zl_0(x,y,u)\right\vert \wedge \left\vert zl_0(x,y,u)\right\vert^2\right)m\left(\mathrm{d}z\right)\mathrm{d}u \\
&\leq
\left\vert \gamma_1(x)-\gamma_1(y)\right\vert + |\gamma_2(y)-\gamma_2(x)|  \int_0^{\infty}\left( z\wedge z^2\right)m\left(\mathrm{d}z\right) \\
&\leq c'_m\vert x-y\vert,
\end{align*}
for all $0\leq x$, $y\leq m$ and some constant $c_m' > 0$,
yielding that condition (3.c) holds for $\rho_m(z)=c_m \sqrt{z}$.
\end{proof}

As a consequence of Theorem \ref{thm:pathwise uniqueness strong solution of nonlinear cbi} the unique solution to \eqref{eq:nonlinear CBI se}
is a strong $(\mathcal{F}_t)_{t\geq0}$-Markov process which is called a CNBI process.
Let $\lbrace P_t\thinspace:\thinspace t\geq0\rbrace$ be its transition semigroup and
$\lbrace P_t^*\thinspace:\thinspace t \geq 0\rbrace$ the dual semigroup. Ergodicity of the CNBI process is obtained below.

\begin{thm}\label{thm:exponential ergodicity in wassterstein distance for nonlinear cbi}
Suppose that conditions (i) -- (iii) of Theorem \ref{thm:pathwise uniqueness strong solution of nonlinear cbi} are satisfied and assume the following:
\begin{enumerate}
\item[(a)] there exists a constant $A>0$ such that
\[
\gamma_0(y)-\gamma_0(x)\leq -A(y-x),\quad 0 \leq x \leq y;
\]
\item[(b)] there exists $\lambda \in [1,2]$ and $K > 0$ such that
$\gamma_1(x) + \gamma_2(x) \leq K(1+x)^{\lambda}$, $x \geq 0$, and
\[
\int_{\lbrace z>1\rbrace} z^2 m(\mathrm{d}z) + \int_{\lbrace z>1\rbrace}z^{\lambda} \nu(\mathrm{d}z) < \infty.
\]
\end{enumerate}
Then, for all $\varrho,\thinspace\widetilde{\varrho}\in\mathcal{P}_1(\mathbb{R}_{\geq0})$, we have
\[
W_1\left(P_t^{\ast}\varrho,P_t^{\ast}\widetilde{\varrho}\right)\leq \mathrm{e}^{-At} W_1\left(\varrho,\widetilde{\varrho}\right),\quad t\geq0.
\]
In particular, there exists a unique invariant distribution $\pi\in\mathcal{P}(\mathbb{R}_{\geq0})$.
Moreover, we have $\pi\in\mathcal{P}_1(\mathbb{R}_{\geq0})$ and, for all $\varrho\in\mathcal{P}_1(\mathbb{R}_{\geq0})$,
\begin{align}\label{EQ:04}
W_1\left(P_t^{\ast}\varrho,\pi\right)\leq \mathrm{e}^{-At}W_1\left(\varrho,\pi\right),\quad t\geq0.
\end{align}
\end{thm}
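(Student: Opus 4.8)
The plan is to obtain the statement as a direct application of Theorem~\ref{thm:exponential ergodicity} to equation~\eqref{eq:nonlinear CBI se}, using the same parameter identification as in the proof of Theorem~\ref{thm:pathwise uniqueness strong solution of nonlinear cbi}, namely $b(x)=\gamma_0(x)$, $\sigma(x,u)=\mathbbm{1}_{\{u\leq\gamma_1(x)\}}$, $g_0(x,z,u)=z\mathbbm{1}_{\{u\leq\gamma_2(x)\}}$, $g_1(x,z)=z$, $\varkappa(\mathrm{d}u)=\mathrm{d}u$, $\mu_0(\mathrm{d}z,\mathrm{d}u)=m(\mathrm{d}z)\mathrm{d}u$, $\mu_1(\mathrm{d}u)=\nu(\mathrm{d}u)$. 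There it was already verified that these are admissible parameters satisfying (3.a)--(3.c), so only conditions (3.d) and (3.e) remain; once these are established, Theorem~\ref{thm:exponential ergodicity} yields the contraction for $W_1(P_t^{\ast}\varrho,P_t^{\ast}\widetilde{\varrho})$ together with the existence, uniqueness and exponential attractiveness of $\pi\in\mathcal{P}_1(\mathbb{R}_{\geq0})$, which is precisely the assertion, including~\eqref{EQ:04}.

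Condition (3.e) is immediate: since $\int_{\{z\leq1\}}z\,\nu(\mathrm{d}z)<\infty$ by~\eqref{eq:measure conditions} and $\int_{\{z>1\}}z\,\nu(\mathrm{d}z)\leq\int_{\{z>1\}}z^{\lambda}\nu(\mathrm{d}z)<\infty$ by~(b) (recall $\lambda\geq1$), the constant $\int_{0}^{\infty}z\,\nu(\mathrm{d}z)$ is finite, hence $\widetilde{b}(x)=\gamma_0(x)-\int_{0}^{\infty}z\,\nu(\mathrm{d}z)$ and $\widetilde{b}(y)-\widetilde{b}(x)=\gamma_0(y)-\gamma_0(x)\leq-A(y-x)$ for $0\leq x\leq y$ by~(a), which is (3.e) with the same $A$.

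The real work is condition (3.d): one must show that the local martingale $M_t=\int_0^t\int_{0}^{\infty}\mathbbm{1}_{\{u\leq\gamma_1(X_s^x)\}}W(\mathrm{d}s,\mathrm{d}u)+\int_0^t\int_{0}^{\infty}\int_{0}^{\infty}\mathbbm{1}_{\{u\leq\gamma_2(X_{s-}^x)\}}z\,\widetilde{N}_0(\mathrm{d}s,\mathrm{d}z,\mathrm{d}u)$ is a true martingale. Its predictable quadratic variation equals $\langle M\rangle_t=\int_0^t\gamma_1(X_s^x)\,\mathrm{d}s+\bigl(\int_{0}^{\infty}z^2\,m(\mathrm{d}z)\bigr)\int_0^t\gamma_2(X_{s-}^x)\,\mathrm{d}s$, where $\int_{0}^{\infty}z^2\,m(\mathrm{d}z)<\infty$ by~\eqref{eq:measure conditions} and~(b); since $\gamma_1(x)+\gamma_2(x)\leq K(1+x)^{\lambda}$ by~(b), it therefore suffices to prove the moment bound $\sup_{s\leq t}\mathbb{E}\bigl[(1+X_s^x)^{\lambda}\bigr]<\infty$ for every $t\geq0$: this makes $\mathbb{E}[\langle M\rangle_t]<\infty$ and upgrades $M$ to a square-integrable martingale. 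I would obtain the moment bound via the Lyapunov function $V(x)=(1+x)^{\lambda}$, which lies in $C^2(\mathbb{R}_{\geq0})$ precisely because we use $1+x$ rather than $x$, by checking $LV(x)\leq c_1V(x)+c_2$ for the generator $L$ in~\eqref{EQ:NONLINEARCBI}: the drift term is bounded using $\gamma_0(x)\leq\gamma_0(0)$ (from~(a) applied to the pair $(0,x)$) and $\gamma_0(0)\geq0$; the diffusion term using $V''(x)=\lambda(\lambda-1)(1+x)^{\lambda-2}$, $\gamma_1(x)\leq K(1+x)^{\lambda}$ and $2\lambda-2\leq\lambda$; the branching integral using the uniform Taylor estimate $V(x+z)-V(x)-zV'(x)\leq\tfrac12\lambda(\lambda-1)z^2$, valid for all $z\geq0$ because $V''(\xi)=\lambda(\lambda-1)(1+\xi)^{\lambda-2}\leq\lambda(\lambda-1)$, together with $\int_{0}^{\infty}z^2\,m(\mathrm{d}z)<\infty$ and $\gamma_2(x)\leq K(1+x)^{\lambda}$; and the immigration integral using $V(x+z)-V(x)\leq zV'(x+z)=\lambda z(1+x+z)^{\lambda-1}\leq\lambda z\bigl((1+x)^{\lambda-1}+z^{\lambda-1}\bigr)$ (subadditivity of $t\mapsto t^{\lambda-1}$, since $\lambda-1\in[0,1]$) together with $\int_{0}^{\infty}z\,\nu(\mathrm{d}z)<\infty$ and $\int_{0}^{\infty}z^{\lambda}\nu(\mathrm{d}z)<\infty$. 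Summing the four contributions gives $LV\leq c_1V+c_2$.

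With $LV\leq c_1V+c_2$ in hand, a standard localization along $\tau_n=\inf\{t\geq0:X_t^x\geq n\}$, It\^o's formula applied to $V(X_{t\wedge\tau_n}^x)$ and Gronwall's inequality yield $\mathbb{E}\bigl[V(X_{t\wedge\tau_n}^x)\bigr]\leq(V(x)+c_2t)\mathrm{e}^{c_1t}$; letting $n\to\infty$ and using Fatou's lemma gives $\sup_{s\leq t}\mathbb{E}\bigl[(1+X_s^x)^{\lambda}\bigr]<\infty$, hence (3.d), and the proof concludes by Theorem~\ref{thm:exponential ergodicity}. The step I expect to be the main obstacle is the Lyapunov estimate $LV\leq c_1V+c_2$: it is where all the hypotheses are consumed and the branching and immigration integrals must be handled carefully — in particular the branching integrand should be bounded by $\tfrac12\lambda(\lambda-1)z^2$ rather than by the cruder $V(x+z)\leq2^{\lambda-1}((1+x)^{\lambda}+z^{\lambda})$, which would leave a spurious term of order $(1+x)^{2\lambda}$ and break the estimate; the localization step in addition requires the customary care with jumps overshooting level $n$, which is accommodated by the nonnegativity of the jump coefficients.
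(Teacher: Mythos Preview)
Your proposal is correct and follows essentially the same route as the paper's proof: reduce to verifying (3.d) and (3.e) for Theorem~\ref{thm:exponential ergodicity}, then establish the moment bound $\sup_{s\leq t}\mathbb{E}[(1+X_s^x)^{\lambda}]<\infty$ via the Lyapunov function $V_\lambda(x)=(1+x)^\lambda$, It\^o's formula, localization along $\tau_n$, Gronwall and Fatou. The only cosmetic differences are that the paper leaves (3.e) implicit and relegates the estimate $LV_\lambda\leq CV_\lambda$ to an appendix lemma (Lemma~\ref{lem:lyapunov estimate1}), whereas you derive $LV\leq c_1V+c_2$ in-line with slightly different algebra for the immigration term.
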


\begin{proof}
In light of Theorem \ref{thm:exponential ergodicity} it only suffices to show that condition (3.d) is satisfied, i.e.
\[
M_t:=
\int_0^t\int_0^{\infty}\mathbbm{1}_{\lbrace u\leq \gamma_1(X_s^x)\rbrace}W(\mathrm{d}s,\mathrm{d}u)
+
\int_0^t\int_0^{\infty}\int_0^{\infty}z\mathbbm{1}_{\lbrace u\leq \gamma_2(X_{s-}^x)\rbrace}\widetilde{N}\left(\mathrm{d}s,\mathrm{d}z,\mathrm{d}u\right),\quad t\geq0
\]
is a martingale with respect to the filtration $(\mathcal{F}_t)_{t\geq0}$. In order to prove that $\lbrace M_t\thinspace:\thinspace t\geq0\rbrace$ is a martingale, it is enough to show that
$\int_{0}^{t}\mathbb{E}\left[ \gamma_1(X_s^x) + \gamma_2(X_s^x) \right] ds < \infty$ which in turn is true if
\begin{align}\label{EQ:01}
\sup_{s \in [0,t]}\mathbb{E}[ (X_s^x)^{\lambda} ] < \infty, \quad t, \thinspace x \geq 0.
\end{align}
Define $V_{\lambda}(x) = (1+x)^{\lambda}$, $x \geq 0$. By It\^o's formula, we obtain
\begin{align}\label{eq:ito formula nonlinear case}
 V_{\lambda}(X_t^x) = V(x) + \int_{0}^{t}(LV_{\lambda})(X_s^x)ds + M_t\left(V_{\lambda}\right),
\end{align}
where, by abuse of notation, we continue to write $LV_{\lambda}$ which is given in \eqref{EQ:NONLINEARCBI}
and $\lbrace M_t(V_{\lambda}) \thinspace:\thinspace t \geq 0\rbrace$ is a local martingale given by
\begin{align*}
M_t\left(V_{\lambda}\right) &=
\int_{0}^{t}\int_{0}^{\infty}V_{\lambda}'(X_s^x) \mathbbm{1}_{ \lbrace u \leq \gamma_1(X_s^x)\rbrace} W(\mathrm{d}s, \mathrm{d}u)\\
&\quad+
\int_{0}^{t}\int_{0}^{\infty} \int_{0}^{\infty}\left( V_{\lambda}\left( X_s^x + z\mathbbm{1}_{\lbrace u\leq \gamma_2(X_{s-}^x)\rbrace}\right) - V_{\lambda}\left(X_s^x\right) \right) \widetilde{N}(\mathrm{d}s, \mathrm{d}z, \mathrm{d}u).
\end{align*}
Define, for $n \in \mathbb{N}$,
a stopping time $\tau_n=\inf\lbrace t\in\mathbb{R}_{\geq0}\thinspace:\thinspace X_t>n\rbrace$.
 It is easy to see that $\lbrace M_{t\wedge \tau_n}(V_{\lambda})\thinspace:\thinspace t\geq0\rbrace$ is a martingale with respect to the filtration $(\mathcal{F}_t)_{t\geq0}$, for any $n\in\mathbb{N}$.
 Hence, taking the expectation in \eqref{eq:ito formula nonlinear case} and
 using Lemma \ref{lem:lyapunov estimate1} from the appendix yields
\[
\mathbb{E}\left[ V_{\lambda}(X_{t\wedge\tau_n}^x)\right]
\leq V_{\lambda}(x) + C\int_{0}^{t} \mathbb{E}\left[ V_{\lambda}(X_{s\wedge\tau_n}^x)\right] \mathrm{d}s,\quad t\geq0,
\]
where $C>0$ is a constant.
By means of Gronwall's lemma, we estimate
$\mathbb{E}\left[V(X_{t\wedge\tau_n}^x)\right] \leq  V_{\lambda}(x)\exp(Ct)$.
Noting that $\lbrace X_t^x\thinspace:\thinspace t\geq0\rbrace$ has c\`adl\`ag paths and $C$ is independent of $n$, we can take the limit $n\to\infty$ and apply the Lemma of Fatou to get $\mathbb{E}[V_{\lambda}(X_t^x)]\leq V_{\lambda}(x)\exp(Ct)$.
This proves \eqref{EQ:01}.
\end{proof}

We end this section by constituting the following example for $\gamma_0,\thinspace \gamma_1,\thinspace \gamma_2$.

\begin{exmp}
Theorem \ref{thm:exponential ergodicity in wassterstein distance for nonlinear cbi} is applicable for the particular choice:
\begin{itemize}
\item $\gamma_0(x)=\beta-bx$ where $\beta\geq0$ and $b>0$ are constants;
\item $\gamma_1(x)=x^{\alpha}$ with $\alpha\in[1,2]$;
\item $\gamma_2(x)=x^{\delta}$ with $\delta\in[1,2]$.
\end{itemize}
\end{exmp}

The reader may wonder why we have apparently excluded the cases when $\alpha$ and $\delta$ either belong to $(0,1)$ or $(2,\infty)$. If $\alpha,\thinspace \delta \in (0,1)$, then we may loose uniqueness, while for $\alpha,\thinspace \delta \in (2,\infty)$
the corresponding process may have an explosion.
In both cases we may try, following \cite{2017arXiv170801560L},
to study solutions having a trap at $0$ and/or at $\infty$.
However, since in this case $\delta_0$ and $\delta_{\infty}$ would be invariant distributions, \eqref{EQ:04} cannot hold in general.

\section{Continuous-state branching processes with immigration}
\label{sec:CBI}


Recall that $(\beta, b, \sigma, m, \nu)$ are admissible parameters, if
$\beta\geq0$, $b\in\mathbb{R}$, and $\sigma\geq0$ are constants, and $m(\mathrm{d}z)$ and $\nu(\mathrm{d}z)$ are L\'evy-measures on $\mathbb{R}_{\geq0}$ satisfying \eqref{eq:measure conditions}.
For $\lambda\geq0$, define the \emph{branching mechanism}
\[
\phi(\lambda)=b\lambda+\frac{1}{2}\sigma^2\lambda^2+
\int_0^{\infty}\left(\mathrm{e}^{-\lambda z}-1+\lambda z\right)m\left(\mathrm{d}z\right),
\]
and \emph{immigration mechanism}
\[
\psi(\lambda)=\beta\lambda+\int_0^{\infty}\left(1-\mathrm{e}^{-\lambda z}\right)\nu\left(\mathrm{d}z\right).
\]
A Markov process with state space $\mathbb{R}_{\geq0}$ is called a CBI process with admissible parameters $(\beta, b, \sigma, m, \nu)$
if its transition semigroup $\lbrace P_t\thinspace:\thinspace t\geq0\rbrace$ has the representation
\begin{equation}
\int_0^{\infty} \mathrm{e}^{-\lambda z}P_t\left(x,\mathrm{d}z\right)
=
\exp\left(-xv_t(\lambda)-\int_0^t\psi\left(v_s(\lambda)\right)\mathrm{d}s\right),\quad x,\thinspace\lambda\geq0,\label{eq:transition semigroup cbi}
\end{equation}
where $t\mapsto v_t(\lambda)$ is the unique nonnegative solution of the ODE
\begin{equation}
\frac{\partial}{\partial t}v_t(\lambda)=-\phi\left(v_t\left(\lambda\right)\right),\quad v_0\left(\lambda\right)=\lambda.\label{eq:def of v_t}
\end{equation}
It can be shown that the corresponding Markov process is a Feller process, $C_c^{\infty}(\mathbb{R}_{\geq 0})$ is a core for its generator, and
the action of the generator is given by \eqref{eq:generator cbi}.
Moreover, it is well-known that each CBI process can be obtained as the unique strong solution
to a certain stochastic equation with jumps, see, e.g., \cite{MR2243880,MR2584896}.
In what follows, we briefly describe this equation.

Let $(\Omega, \mathcal{F}, (\mathcal{F}_t)_{t \geq 0}, \mathbb{P})$ be a stochastic basis satisfying the usual conditions rich enough to support the following random objects:
a $(\mathcal{F}_t)_{t \geq 0}$-Gaussian white noise
$W(\mathrm{d}s,\mathrm{d}u)$ with intensity $\mathrm{d}t\mathrm{d}u$, $(\mathcal{F}_t)_{t \geq 0}$-Poisson random measures
$N_0(\mathrm{d}t,\mathrm{d}z,\mathrm{d}u)$
and $N_1(\mathrm{d}t,\mathrm{d}z)$ on $\mathbb{R}_{\geq0}\times\mathbb{R}_{\geq0}\times\mathbb{R}_{\geq0}$ and $\mathbb{R}_{\geq0}\times\mathbb{R}_{\geq0}$ with intensities $\mathrm{d}tm(\mathrm{d}z)\mathrm{d}u$ and $\mathrm{d}t\nu(\mathrm{d}z)$, respectively. We denote by $\widetilde{N}_0(\mathrm{d}t,\mathrm{d}z,\mathrm{d}u)$ the compensated measure of $N_0(\mathrm{d}t,\mathrm{d}z,\mathrm{d}u)$. Suppose that $W(\mathrm{d}s,\mathrm{d}u)$, $N_0(\mathrm{d}s,\mathrm{d}z,\mathrm{d}u)$, and $ N_1(\mathrm{d}s,\mathrm{d}z)$ are mutually independent.
Then, for each $\mathcal{F}_0$-measurable $X_0 \geq 0$,
there exists a unique strong solution to
\begin{align}
X_t
&=
X_0 + \int_0^t \left(\beta-bX_s\right)\mathrm{d}s+\sigma\int_0^t\int_0^{\infty}\mathbbm{1}_{\lbrace u\leq X_{s}\rbrace} W\left(\mathrm{d}s,\mathrm{d}u\right) \nonumber \\
&\quad+
\int_0^t\int_0^{\infty}\int_0^{\infty}z\mathbbm{1}_{\lbrace u\leq X_{s-}\rbrace}\widetilde{N}_0\left(\mathrm{d}s,\mathrm{d}z,\mathrm{d}u\right)+
\int_0^t\int_0^{\infty}zN_1\left(\mathrm{d}s,\mathrm{d}z\right).\label{eq:se cbi}
\end{align}
In order to deduce this result one may, e.g.,
apply Proposition \ref{prop:existence and uniqueness of strong solutions}.
It\^o's formula shows that $\lbrace X_t \thinspace:\thinspace t\geq0\rbrace$ is a Markov process whose generator is given by \eqref{eq:generator cbi}.
Conversely, the law of any CBI process with admissible parameters $(\beta, b, \sigma, m ,\nu)$ can be obtained from \eqref{eq:se cbi}.

The following is a particular case of \cite[Lemma 3.4]{MR3340375}.

\begin{rem}
Let $(\beta, b, \sigma, m ,\nu)$ be admissible parameters with $\nu$ satisfying
\begin{equation}
\int_{\{ z > 1\}}z\nu\left(\mathrm{d}z\right)<\infty.\label{eq:integrability condition}
\end{equation}
Then $X_t^x$ has finite first moment given by
\[
\mathbb{E}\left[X_t^x\right]=\mathrm{e}^{-bt}x+\left(\beta+\int_{\mathbb{R}_{\geq 0}}z\nu\left(\mathrm{d}z\right)\right)
\int_0^t\mathrm{e}^{-bs}\mathrm{d}s,\quad t\geq0.
\]
\end{rem}

\subsection{Exponential ergodicity in Wasserstein distance}
\label{subsec:exp ergo in wasserstein distances cbi}

The aim of this subsection is to derive exponential ergodicity in the Wasserstein distance $W_{\log}$ for CBI processes. Let us start with a general characterization of the existence and uniqueness
of the invariant distribution. The next theorem is proved in \cite{MR2760602}.

\begin{thm}[Li \cite{MR2760602}, Theorem 3.20] \label{rem:invariant measure for cbi}
Let $(\beta, b, \sigma, m, \nu)$ be admissible
parameters satisfying $b\geq0$ and $\phi(\lambda)\not=0$ for $\lambda>0$.
Then the following are equivalent:
\begin{enumerate}
 \item[(a)] There exists $x \geq 0$ and $\pi \in \mathcal{P}(\mathbb{R}_{\geq 0})$ such that $P_t(x,\cdot) \to \pi$ weakly as $t \to \infty$.
 \item[(b)] The branching and immigration mechanisms satisfy
 \begin{equation}
\int_0^{\lambda}\frac{\psi(u)}{\phi(u)}\mathrm{d}u<\infty\quad\text{for some }\lambda>0.\label{eq:existence of inv measure for cbi condition}
 \end{equation}
\end{enumerate}
Moreover, if either (a) or (b) is satisfied, then $P_t(x,\cdot)$ converges weakly to $\pi$ as $t\to\infty$ for all $x\geq0$, $\pi$ is the unique invariant distribution and its Laplace transform is given by
\begin{equation}
\int_0^{\infty}\mathrm{e}^{-\lambda x}\pi\left(\mathrm{d}x\right)
= \exp\left(-\int_0^{\lambda}\frac{\psi(u)}{\phi(u)}\mathrm{d}u\right),\quad \lambda\geq0.\label{eq:laplace transform of pi}
\end{equation}
\end{thm}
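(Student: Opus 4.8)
The plan is to reduce everything to the scalar ODE \eqref{eq:def of v_t} for $t\mapsto v_t(\lambda)$, since by \eqref{eq:transition semigroup cbi} the transition semigroup is completely encoded by $v_t$. First I would record two preliminary facts. Because $\mathrm{e}^{-\lambda z}-1+\lambda z\geq0$, the hypothesis $b\geq0$ gives $\phi(\lambda)\geq0$ for all $\lambda\geq0$, and combined with $\phi(\lambda)\neq0$ for $\lambda>0$ this yields $\phi>0$ on $(0,\infty)$. Hence, for fixed $\lambda>0$, \eqref{eq:def of v_t} shows that $t\mapsto v_t(\lambda)$ is strictly decreasing and bounded below by $0$; its limit $\ell$ satisfies $\phi(\ell)=0$, so $v_t(\lambda)\downarrow0$ as $t\to\infty$, while a Gronwall-type lower bound (using $\phi(v)\leq(\phi(\lambda)/\lambda)v$ on $[0,\lambda]$, valid by convexity) keeps $v_t(\lambda)>0$ for every finite $t$; the case $\lambda=0$ is trivial. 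Second, the substitution $u=v_s(\lambda)$, $\mathrm{d}u=-\phi(v_s(\lambda))\,\mathrm{d}s$, legitimate because $\phi>0$ on $[v_t(\lambda),\lambda]\subset(0,\infty)$, gives
\[
\int_0^t\psi\left(v_s(\lambda)\right)\mathrm{d}s=\int_{v_t(\lambda)}^{\lambda}\frac{\psi(u)}{\phi(u)}\,\mathrm{d}u,\qquad t\geq0,\ \lambda>0.
\]
Letting $t\to\infty$, the left side increases to $\int_0^{\lambda}\psi(u)/\phi(u)\,\mathrm{d}u\in(0,\infty]$; since $\psi/\phi$ is continuous on $(0,\infty)$, finiteness for one $\lambda>0$ is equivalent to finiteness for all $\lambda>0$, i.e., to condition (b), in which case the integral also tends to $0$ as $\lambda\downarrow0$.

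With this in hand, I would prove $(b)\Rightarrow(a)$: under (b), \eqref{eq:transition semigroup cbi} together with $xv_t(\lambda)\to0$ shows that $\int_0^{\infty}\mathrm{e}^{-\lambda z}P_t(x,\mathrm{d}z)$ converges as $t\to\infty$ to $L(\lambda):=\exp\big(-\int_0^{\lambda}\psi(u)/\phi(u)\,\mathrm{d}u\big)$ for every $x\geq0$ and $\lambda\geq0$. Since $L$ is continuous at $0$ with $L(0)=1$, the continuity theorem for Laplace transforms produces a probability measure $\pi\in\mathcal{P}(\mathbb{R}_{\geq0})$ with Laplace transform $L$ and $P_t(x,\cdot)\to\pi$ weakly for every $x$, which is (a) and at the same time formula \eqref{eq:laplace transform of pi}. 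For $(a)\Rightarrow(b)$: if $P_t(x,\cdot)\to\pi$ weakly, then $\int_0^{\infty}\mathrm{e}^{-\lambda z}P_t(x,\mathrm{d}z)\to\int_0^{\infty}\mathrm{e}^{-\lambda z}\pi(\mathrm{d}z)>0$, and comparing with \eqref{eq:transition semigroup cbi} while using $xv_t(\lambda)\to0$ forces $\int_0^t\psi(v_s(\lambda))\,\mathrm{d}s$ to remain bounded in $t$, which is precisely (b).

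Finally, uniqueness. Let $\widehat{\pi}$ be an arbitrary invariant distribution. Integrating \eqref{eq:transition semigroup cbi} against $\widehat{\pi}$ and using $P_t^{\ast}\widehat{\pi}=\widehat{\pi}$ gives
\[
\int_0^{\infty}\mathrm{e}^{-\lambda z}\widehat{\pi}(\mathrm{d}z)=\mathrm{e}^{-\int_0^t\psi(v_s(\lambda))\,\mathrm{d}s}\int_0^{\infty}\mathrm{e}^{-xv_t(\lambda)}\widehat{\pi}(\mathrm{d}x),\qquad t\geq0.
\]
Letting $t\to\infty$, the integral on the right increases to $1$ by monotone convergence (here $v_t(\lambda)\downarrow0$ is used) and the prefactor decreases to $\mathrm{e}^{-\int_0^{\infty}\psi(v_s(\lambda))\,\mathrm{d}s}$; since the left side is a fixed positive constant, the limit must be positive, which re-derives (b) and pins down $\int_0^{\infty}\mathrm{e}^{-\lambda z}\widehat{\pi}(\mathrm{d}z)=L(\lambda)$ for all $\lambda\geq0$, so $\widehat{\pi}=\pi$. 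I expect the one genuinely delicate point to be the justification of the change-of-variables identity together with the monotonicity $v_t(\lambda)\downarrow0$ — precisely where the hypotheses $b\geq0$ and $\phi(\lambda)\neq0$ for $\lambda>0$ are used; the remaining arguments are routine uses of the Laplace-transform continuity theorem and monotone convergence.
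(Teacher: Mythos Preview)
The paper does not supply its own proof of this statement: it is quoted as Theorem~3.20 of Li~\cite{MR2760602} and simply cited (``The next theorem is proved in \cite{MR2760602}''). So there is no in-paper argument to compare against.

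That said, your proposal is correct and is essentially the standard argument one finds in Li's monograph. The key steps --- showing $\phi>0$ on $(0,\infty)$ from $b\geq0$ and the assumption $\phi(\lambda)\neq0$, deducing $v_t(\lambda)\downarrow0$ from the ODE \eqref{eq:def of v_t}, and performing the substitution $u=v_s(\lambda)$ to identify $\int_0^t\psi(v_s(\lambda))\,\mathrm{d}s$ with $\int_{v_t(\lambda)}^{\lambda}\psi(u)/\phi(u)\,\mathrm{d}u$ --- are exactly what is needed, and your use of the Laplace continuity theorem for both directions of the equivalence and for uniqueness is clean. The convexity bound $\phi(v)\leq(\phi(\lambda)/\lambda)v$ on $[0,\lambda]$ (from $\phi(0)=0$ and convexity of $\phi$) to keep $v_t(\lambda)>0$ for finite $t$ is a nice touch that makes the change of variables rigorous. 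One very minor remark: in the uniqueness step you implicitly use that the Laplace transform of a probability measure is strictly positive, which is immediate; otherwise the argument is complete as written.
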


The following is due to \cite[Corollary 3.21]{MR2760602}.

\begin{rem}
If $b > 0$, then \eqref{eq:existence of inv measure for cbi condition} is equivalent to
\eqref{eq:inequality w_log pi}.
\end{rem}

As a consequence of Theorem \ref{thm:exponential ergodicity},
we find the following ergodicity result for CBI processes in the Wasserstein distance $W_1$.

\begin{rem}\label{rem:exp ego in wasserstein distence w_1 cbi}
Let $(\beta, b, \sigma, m ,\nu)$ be admissible parameters
with $b > 0$ and $\nu$ satisfying \eqref{eq:integrability condition}.
Then all assumptions of Theorem \ref{thm:exponential ergodicity} are satisfied
implying that the unique invariant distribution $\pi$ given by Theorem \ref{rem:invariant measure for cbi} has finite first moment and satisfies, for all $\varrho\in\mathcal{P}_1(\mathbb{R}_{\geq0})$,
\[
W_1\left(P_t^{\ast}\varrho,\pi\right)\leq \mathrm{e}^{-bt}W_1\left(\varrho,\pi\right),\quad t\geq0.
\]
\end{rem}

An analogue result for affine processes was recently established in \cite{FJR}. Note that the class of affine processes include the (multidimensional) CBI processes as a special case.
Moreover, it was shown in \cite{FJR} that under the analogue of \eqref{eq:inequality w_log pi}
the statement of Remark \ref{rem:exp ego in wasserstein distence w_1 cbi} holds true for affine processes even in the Wasserstein distance $W_{\log}$.
Motivated by the fact that our main result (see Theorem \ref{thm:exponential ergodicity cbi} below)
requires the existence of $\log$-moments for the invariant distribution $\pi$, we recall the result of \cite[Theorem 1.6 (a)]{FJR} for our purpose.

\begin{thm}\label{thm:exponential ergodicity in wasserstein distance of cbi processes}
Let $(\beta, b, \sigma, m, \nu)$ be admissible parameters
and assume that $b > 0$ and \eqref{eq:inequality w_log pi} are satisfied.
Let $\lbrace P_t\thinspace:\thinspace t \geq 0\rbrace$ be the corresponding transition semigroup.
Then there exists a constant $C > 0$ such that,
for all $\varrho,\thinspace\widetilde{\varrho}\in\mathcal{P}_{\log}(\mathbb{R}_{\geq0})$, we have
\begin{equation}
W_{\log}\left(P_t^{\ast}\varrho,P_t^{\ast}\widetilde{\varrho}\right)\leq C\min\left\lbrace \mathrm{e}^{-bt}, W_{\log}\left(\varrho,\widetilde{\varrho}\right)\right\rbrace+C\mathrm{e}^{-bt}W_{\log}\left(\varrho,\widetilde{\varrho}\right),\quad t\geq0.\label{eq:exp ergodicity in wasserstein distance under log condition}
\end{equation}
In particular, the unique invariant distribution $\pi$ belongs to $\mathcal{P}_{\log}(\mathbb{R}_{\geq0})$ and satisfies
\[
W_{\log}\left(P_t^{\ast}\varrho,\pi\right)\leq C \min\left\lbrace \mathrm{e}^{-bt}, W_{\log}\left(\varrho,\pi\right) \right\rbrace + C\mathrm{e}^{-bt}W_{\log}\left(\varrho,\pi\right),\quad t\geq0.
\]
\end{thm}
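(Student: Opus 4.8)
The plan is to follow the strategy behind the proof of Theorem~\ref{thm:exponential ergodicity} --- reduce to point masses and estimate the one-point quantity --- but to replace the rough comparison argument by an exact use of the branching property, which is available here since we are dealing with CBI processes. By the convexity of $W_{\log}$ (Lemma~\ref{lem:convexity of wasserstein distances}) it is enough to establish \eqref{eq:exp ergodicity in wasserstein distance under log condition} for $\varrho=\delta_x$ and $\widetilde\varrho=\delta_y$ with $0\leq x\leq y$; since $W_{\log}(\delta_x,\delta_y)=\log(1+(y-x))$, what remains is to bound $W_{\log}(P_t(x,\cdot),P_t(y,\cdot))$ in terms of $y-x$ and $t$. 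Denote by $Z_t^{w}$ the value at time $t$ of a continuous-state branching process with branching mechanism $\phi$ and no immigration, started from $w\geq0$. By the additive structure of CBI processes there is a coupling $(\xi,\xi+\zeta)$ of $(P_t(x,\cdot),P_t(y,\cdot))$ in which $\zeta$ has the law of $Z_t^{y-x}$ and is independent of $\xi$ (equivalently, this is the convolution estimate of Lemma~\ref{lem:convolution estimate}, valid since $d_{\log}(a+c,\widetilde a+c)=d_{\log}(a,\widetilde a)$), whence
\[
W_{\log}\bigl(P_t(x,\cdot),P_t(y,\cdot)\bigr)\leq\mathbb{E}\bigl[\log\bigl(1+Z_t^{y-x}\bigr)\bigr].
\]

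The core of the argument is then an estimate for $\mathbb{E}[\log(1+Z_t^{w})]$, $w=y-x\geq0$. I would use the Frullani representation $\log(1+z)=\int_0^\infty\lambda^{-1}\mathrm{e}^{-\lambda}(1-\mathrm{e}^{-\lambda z})\,\mathrm{d}\lambda$ together with Tonelli's theorem and the branching Laplace transform $\mathbb{E}[\mathrm{e}^{-\lambda Z_t^{w}}]=\mathrm{e}^{-wv_t(\lambda)}$, with $v_t$ the solution of \eqref{eq:def of v_t}, to write
\[
\mathbb{E}\bigl[\log\bigl(1+Z_t^{w}\bigr)\bigr]=\int_0^\infty\frac{\mathrm{e}^{-\lambda}}{\lambda}\bigl(1-\mathrm{e}^{-wv_t(\lambda)}\bigr)\,\mathrm{d}\lambda.
\]
Subcriticality enters only through the elementary inequality $\phi(\lambda)\geq b\lambda$, which by comparison in the ODE \eqref{eq:def of v_t} yields $v_t(\lambda)\leq\lambda\mathrm{e}^{-bt}$, so that $1-\mathrm{e}^{-wv_t(\lambda)}\leq 1\wedge(w\lambda\mathrm{e}^{-bt})$ and the right-hand side is dominated by $\int_0^\infty\lambda^{-1}\mathrm{e}^{-\lambda}(1\wedge w\lambda\mathrm{e}^{-bt})\,\mathrm{d}\lambda$. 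Splitting this integral at $\lambda=\mathrm{e}^{bt}/w$ and bounding the two pieces separately produces a bound of the shape $C\min\{\mathrm{e}^{-bt},\log(1+w)\}+C\mathrm{e}^{-bt}\log(1+w)$; feeding this through the convexity of $W_{\log}$ gives \eqref{eq:exp ergodicity in wasserstein distance under log condition}.

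For the ``in particular'' statements I would argue verbatim as in Steps~3 and~4 of the proof of Theorem~\ref{thm:exponential ergodicity}: since $(\mathcal{P}_{\log}(\mathbb{R}_{\geq0}),W_{\log})$ is a complete metric space, \eqref{eq:exp ergodicity in wasserstein distance under log condition} and the semigroup property make $(P_k^{\ast}\varrho)_{k\in\mathbb{N}}$ a Cauchy sequence with an invariant limit $\pi\in\mathcal{P}_{\log}(\mathbb{R}_{\geq0})$, while the contraction part of \eqref{eq:exp ergodicity in wasserstein distance under log condition} (letting $t\to\infty$) forces uniqueness; putting $\widetilde\varrho=\pi$ yields the last displayed bound. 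The hypothesis \eqref{eq:inequality w_log pi} is needed only to ensure $P_t^{\ast}\varrho\in\mathcal{P}_{\log}(\mathbb{R}_{\geq0})$ for $\varrho\in\mathcal{P}_{\log}(\mathbb{R}_{\geq0})$, i.e.\ that the immigration part $P_t(0,\cdot)$ has a finite $\log$-moment; this follows from the same Frullani computation applied to the immigration Laplace transform $\exp(-\int_0^t\psi(v_s(\lambda))\,\mathrm{d}s)$ on noting that $v_s(\lambda)\leq\lambda$, that $\psi$ is nondecreasing, and that \eqref{eq:inequality w_log pi} is precisely what makes $\int_0^1\lambda^{-1}\psi(\lambda)\,\mathrm{d}\lambda$ finite.

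I expect the genuine obstacle to lie in the explicit estimate of the second paragraph: the single integral $\int_0^\infty\lambda^{-1}\mathrm{e}^{-\lambda}(1\wedge w\lambda\mathrm{e}^{-bt})\,\mathrm{d}\lambda$ must simultaneously reproduce the contraction regime ($\lesssim\mathrm{e}^{-bt}\log(1+w)$ for moderate $w$) and the stability regime ($\lesssim\min\{\mathrm{e}^{-bt},\log(1+w)\}$, with at most $\log$-growth in $w$), the latter requiring careful bookkeeping of the tail $\int_{\mathrm{e}^{bt}/w}^\infty\lambda^{-1}\mathrm{e}^{-\lambda}\,\mathrm{d}\lambda$; a secondary point is pinning down that \eqref{eq:inequality w_log pi}, rather than the stronger polynomial condition \eqref{eq:inequality p pi}, already suffices for the $\log$-integrability of the immigration part.
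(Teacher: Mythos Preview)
Your proposal mirrors the paper's three–step architecture: (i) show $P_t^{\ast}(\mathcal{P}_{\log})\subset\mathcal{P}_{\log}$, (ii) reduce to Dirac masses and bound $W_{\log}(P_t(x,\cdot),P_t(y,\cdot))$ in terms of $e^{-bt}|x-y|$, (iii) lift to general measures by convexity (Lemma~\ref{lem:convexity of wasserstein distances}) and conclude by a Cauchy–sequence argument in the complete space $(\mathcal{P}_{\log},W_{\log})$. The differences are in execution rather than substance. For (i) the paper uses It\^o's formula and the Lyapunov estimate $LV\leq C$ with $V(x)=\log(1+x)$ (Lemma~\ref{lem:lyapunov estimate}); your Frullani/Laplace route is a genuine alternative and, as you observe, isolates exactly where \eqref{eq:inequality w_log pi} enters (through the finiteness of $\int_0^1\lambda^{-1}\psi(\lambda)\,\mathrm{d}\lambda$). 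For (ii) the paper factorises $P_t(x,\cdot)=Q_t(x,\cdot)\ast P_t(0,\cdot)$, applies Lemma~\ref{lem:convolution estimate}, and then uses Jensen/concavity in the form $W_{\log}\leq\log(1+W_1)$ together with $W_1(Q_t^{\ast}\delta_x,Q_t^{\ast}\delta_y)\leq e^{-bt}|x-y|$ to arrive at $W_{\log}(P_t^{\ast}\delta_x,P_t^{\ast}\delta_y)\leq\log(1+e^{-bt}|x-y|)$. Your direct coupling via the branching additivity $P_t(y,\cdot)=P_t(x,\cdot)\ast Q_t(y-x,\cdot)$ and the Frullani computation of $\mathbb{E}[\log(1+Z_t^{y-x})]$ produce, after the bound $v_t(\lambda)\leq\lambda e^{-bt}$, a quantity that is equivalent up to constants to the same $\log(1+e^{-bt}|x-y|)$.

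The step you flag as the ``genuine obstacle'' is precisely where the paper invokes the one–line inequality \eqref{eq:elementary log inequality}, $\log(1+a d)\leq C\min\{a,\log(1+d)\}+Ca\log(1+d)$, with $a=e^{-bt}$ and $d=|x-y|$; your splitting of $\int_0^\infty\lambda^{-1}e^{-\lambda}(1\wedge aw\lambda)\,\mathrm{d}\lambda$ at $\lambda=(aw)^{-1}$ would have to reproduce exactly this bound. So no additional idea beyond what the paper records is needed here --- the paper does not perform a separate tail analysis but appeals directly to \eqref{eq:elementary log inequality}. Your instinct that this is the delicate point is correct; the rest of your outline matches the paper's proof essentially line for line.
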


In contrast to \cite[Theorem 1.6 (a)]{FJR}, the proof can be significantly simplified in our context, i.e., when dealing with one-dimensional CBI processes.
For convenience of the reader and in order to keep this work self-contained, we provide a sketch of the proof.

\begin{proof}[Proof of Theorem \ref{thm:exponential ergodicity in wasserstein distance of cbi processes}]
We divide the proof in three steps.

\textit{Step 1:} We show that $P_t^*(\mathcal{P}_{\log}(\mathbb{R}_{\geq 0}))\subset \mathcal{P}_{\log}(\mathbb{R}_{\geq 0})$, $t \geq 0$.
Let $V(x):=\log(1+x)$, $x\geq0$.
Then it suffices to find a constant $C>0$ such that
\begin{equation}
\mathbb{E}\left[V\left(X_t\right)\right]\leq Ct+ \mathbb{E}[V(X_0)],\quad t\geq0.\label{eq:moment inequality for v}
\end{equation}
We apply It\^o's formula to $V(x)$, and get
\begin{equation}
V\left(X_t\right) = V\left(X_0\right) + \int_0^tLV\left(X_s\right)\mathrm{d}s+M_t(V),\quad t\geq0,\label{eq:ito formula to v}
\end{equation}
where $LV$ is informally defined by \eqref{eq:generator cbi} and
\begin{align*}
M_t(V)&:=\sigma\int_0^t\int_0^{\infty}\mathbbm{1}_{\lbrace u\leq X_s\rbrace}W\left(\mathrm{d}s,\mathrm{d}u\right) \\
&\thinspace\quad+
\int_0^t\int_0^{\infty}\int_0^{\infty}\left(V\left(X_{s-}+z\mathbbm{1}_{\lbrace u\leq X_{s-}\rbrace}\right)-V\left(X_{s-}\right)\right)\widetilde{N}_0\left(\mathrm{d}s,\mathrm{d}z,\mathrm{d}u\right) \\
&\thinspace\quad
+\int_0^t\int_0^{\infty}\left(V\left(X_{s-}+z\right)-V\left(X_{s-}\right)\right)\widetilde{N}_1\left(\mathrm{d}s,\mathrm{d}z\right),\quad t\geq0,
\end{align*}
where $\widetilde{N}_1(\mathrm{d}s,\mathrm{d}z)$ denotes the compensated $(\mathcal{F}_t)_{t\geq0}$-Poisson random measure of $N_1(\mathrm{d}s,\mathrm{d}z)$. For $n\in\mathbb{N}$, define a stopping time by $\tau_n=\inf\lbrace t\in\mathbb{R}_{\geq0} \thinspace : \thinspace X_t>n\rbrace$.
Then $\lbrace M_{t\wedge\tau_n}\thinspace:\thinspace t\geq0\rbrace$ is a martingale with respect to the filtration $(\mathcal{F}_t)_{t\geq0}$ for any $n\in\mathbb{N}$. Hence, taking expectations in \eqref{eq:ito formula to v} and using that $LV\leq C$, for all $x\geq0$ and a constant $C>0$, see Lemma \ref{lem:lyapunov estimate} in the appendix, gives
\begin{align*}
\mathbb{E}\left[V\left(X_{t\wedge\tau_n}\right)\right] &=
 \mathbb{E}[V(X_0)] + \mathbb{E}\left[\int_0^tLV\left(X_{s\wedge\tau_n}\right)\mathrm{d}s\right]
 \leq  \mathbb{E}[V(X_0)] + Ct.
\end{align*}
Noting that $\lbrace X_t\thinspace:\thinspace t\geq0\rbrace$ has c\`adl\`ag paths and $C$ is independent of $n$, we can take the limit $n\to\infty$ and apply Fatou's lemma to conclude with \eqref{eq:moment inequality for v}.

\textit{Step 2:} From now on, we can proceed very close to the proof of Theorem \ref{thm:exponential ergodicity}, albeit with some slightly different estimates. The details are as follows: let $\lbrace Q_t\thinspace:\thinspace t\geq0\rbrace$ be the transition semigroup with admissible parameters $b>0$, $\beta=0$, $m$, and $\nu\equiv0$. Hence, for $\lambda\geq0$, we have
\[
\int_0^{\infty}\mathrm{e}^{-\lambda z}Q_t\left(x,\mathrm{d}z\right)=\mathrm{e}^{-xv_t(\lambda)}\quad\text{and}\quad
\int_0^{\infty}\mathrm{e}^{-\lambda z}P_t\left(0,\mathrm{d}z\right)= \exp\left(-\int_0^t\psi\left(v_s(\lambda)\right)\mathrm{d}s\right),
\]
and consequently, $P_t(x,\cdot)=Q_t(x,\cdot)\ast P_t(0,\cdot)$.
Noting that $Q_t$ satisfies the conditions of
Theorem \ref{thm:exponential ergodicity}, we obtain from Step 1 and 2
of its proof
\[
W_1\left(Q_t^{\ast}\delta_x,Q_t^{\ast}\delta_y\right)\leq \mathrm{e}^{-bt} \vert x-y\vert,\quad t\geq0,
\]
where $\lbrace Q_t^{\ast}\thinspace:\thinspace t\geq0\rbrace$ denotes the dual semigroup of $Q_t$.
With this, and applying Lemma \ref{lem:convolution estimate} (a), we get
\begin{align}
W_{\log}\left(P_t^{\ast}\delta_x,P_t^{\ast}\delta_y\right)
&\leq W_{\log}\left(Q_t^{\ast}\delta_x,Q_t^{\ast}\delta_y\right) \nonumber \\
&\leq \log\left(1+W_1\left(Q_t^{\ast}\delta_x,Q_t^{\ast}\delta_y\right)\right)
\leq \log\left(1+ \mathrm{e}^{-bt} \vert x-y\vert\right).\label{eq:00}
\end{align}
Moreover, for $a,\thinspace d\geq0$, we use the elementary inequality
\begin{align}
\log(1+a\cdot d) \leq C \min\{a, \log(1+d)\} + Ca\log(1+d) \label{eq:elementary log inequality},
\end{align}
where $C > 0$ is a generic constant, to obtain from \eqref{eq:00}
\[
W_{\log}\left(P_t^{\ast}\delta_x,P_t^{\ast}\delta_y\right)
\leq C\min\{e^{-bt}, \log\left(1+\vert x-y\vert\right)\}+Ce^{-bt}\log\left(1+\vert x-y\vert\right).
\]
By using the convexity of $W_{\log}$ we get, for any $H\in\mathcal{H}(\varrho,\widetilde{\varrho})$,
\begin{align*}
W_{\log}\left(P_t^{\ast}\varrho,P_t^{\ast}\widetilde{\varrho}\right)
&\leq \int_{\mathbb{R}_{\geq0}\times\mathbb{R}_{\geq0}}W_{\log}\left(P_t^{\ast}\delta_x,P_t^{\ast}\delta_y\right)
H\left(\mathrm{d}x,\mathrm{d}y\right) \\
&\leq C\int_{\mathbb{R}_{\geq0}\times\mathbb{R}_{\geq0}}\min\left\lbrace e^{-bt},  \log\left(1+\vert x-y\vert\right)\right\rbrace H\left(\mathrm{d}x,\mathrm{d}y\right) \\
&\quad+ Ce^{-bt}\int_{\mathbb{R}_{\geq0}\times\mathbb{R}_{\geq0}}\log\left(1+\vert x-y\vert\right)H\left(\mathrm{d}x,\mathrm{d}y\right) \\
&\leq C \min\left\{ \mathrm{e}^{-bt}, \int_{\mathbb{R}_{\geq0}\times\mathbb{R}_{\geq0}}\log\left(1+\vert x-y\vert\right)H\left(\mathrm{d}x,\mathrm{d}y\right) \right\} \\
&\quad+ C e^{-bt}\int_{\mathbb{R}_{\geq0}\times\mathbb{R}_{\geq0}}\log\left(1+\vert x-y\vert\right)H\left(\mathrm{d}x,\mathrm{d}y\right).
\end{align*}
Finally, taking $H$ as the optimal coupling of $(\varrho,\widetilde{\varrho})$, we deduce that
\[
W_{\log}\left(P_t^{\ast}\varrho,P_t^{\ast}\widetilde{\varrho}\right)
\leq C \min\{\mathrm{e}^{-bt}, W_{\log}\left(\varrho,\widetilde{\varrho}\right)\} + C\mathrm{e}^{-bt}W_{\log}\left(\varrho,\widetilde{\varrho}\right).
\]

\textit{Step 3:} Let $\varrho \in \mathcal{P}_{\log}(\mathbb{R}_{\geq 0})$.
Arguing similar to the proof of Theorem \ref{thm:exponential ergodicity},
we see that $(P_t^{\ast}\varrho)_{k \in \mathbb{N}} \subset\mathcal{P}_{\log}(\mathbb{R}_{\geq0})$ is a Cauchy sequence and, thus, has a limit $\widehat{\pi}\in\mathcal{P}_{\log}(\mathbb{R}_{\geq0})$. Clearly, $\widehat{\pi}$ is an invariant distribution and hence, by uniqueness of the invariant distribution, $\pi=\widehat{\pi}\in\mathcal{P}_{\log}(\mathbb{R}_{\geq0})$. Eventually, noting that
\[
W_{\log}\left(P_t^{\ast}\varrho,\pi\right)=W_{\log}\left( P_t^{\ast}\varrho,P_t^{\ast}\pi\right)
\leq C \min\left\lbrace\mathrm{e}^{-bt},W_{\log}\left(\varrho,\pi\right)\right\rbrace+ C\mathrm{e}^{-bt}W_{\log}\left(\varrho,\pi\right),
\]
we conclude with \eqref{eq:exp ergodicity in wasserstein distance under log condition}.
This completes the proof.
\end{proof}

\subsection{Exponential ergodicity in total variation distance}
\label{subsec:exp ergo in tv cbi}

A general exponential ergodicity result in the total variation distance
was recently obtained by Li and Ma \cite{MR3343292}, where Grey's condition was used:
\begin{enumerate}
 \item[(5.a)] there exists $\theta>0$ such that $\phi(\lambda)>0$ for $\lambda>\theta$ and
\[
\int_{\theta}^{\infty}\phi(\lambda)^{-1}\mathrm{d}\lambda<\infty.
\]
\end{enumerate}

The following was shown in \cite{MR3343292}.

\begin{thm}[\cite{MR3343292}]\label{prop:exponential ergodicity of P_t^0}
Let $(\beta, b, \sigma, m, \nu)$ be admissible parameters.
Suppose that $b > 0$, $\nu\equiv0$, and (5.a) is satisfied. Let $\lbrace P_t^0\thinspace:\thinspace t \geq 0\rbrace$ be the corresponding transition semigroup given by
\begin{equation}
\int_{\mathbb{R}_{\geq0}}\mathrm{e}^{-\lambda z}P_t^0\left(x,\mathrm{d}z\right)
=
\exp\left(-xv_t\left(\lambda\right)-\beta\int_0^tv_s\left(\lambda\right)\mathrm{d}s\right),\quad \lambda\geq0,\label{eq:transition semigroup P_t^0}
\end{equation}
where $v_t$ is determined by \eqref{eq:def of v_t}.
Denote by $\pi^0$ the corresponding unique invariant distribution given by Theorem \ref{rem:invariant measure for cbi}.
Then there exists a constant $C > 0$ such that,
for all $t,\thinspace x,\thinspace y \geq 0$,
\[
\left\Vert P_t^0(x,\cdot)-P_t^0(y,\cdot)\right\Vert_{TV}
\leq C \min \left\{1, \mathrm{e}^{-bt}\vert x-y\vert \right\}
\]
and
\[
\left\Vert P_t^0(x,\cdot)-\pi^0(\cdot)\right\Vert_{TV}
\leq C \min\left\{ 1, \left(x+\beta b^{-1}\right)\mathrm{e}^{-bt}\right\}
\]
are satisfied. In particular, $\lbrace P_t^0\thinspace:\thinspace t \geq 0\rbrace$ has the strong Feller property and is exponentially ergodic in the total variation distance.
\end{thm}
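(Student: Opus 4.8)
The plan is to exploit the branching property of the continuous-state branching part of the process, together with the fact that Grey's condition (5.a) forces that part to carry an atom at $0$. Writing $\psi(\lambda)=\beta\lambda$ (since $\nu\equiv0$) and comparing the Laplace transforms \eqref{eq:transition semigroup P_t^0}, one has, for $0\le x\le y$,
\[
\int_{\mathbb{R}_{\geq0}}\mathrm{e}^{-\lambda z}P_t^0(y,\mathrm{d}z)=\mathrm{e}^{-(y-x)v_t(\lambda)}\int_{\mathbb{R}_{\geq0}}\mathrm{e}^{-\lambda z}P_t^0(x,\mathrm{d}z),
\]
so that $P_t^0(y,\cdot)=P_t^0(x,\cdot)\ast Q_t(y-x,\cdot)$, where $Q_t$ denotes the pure branching semigroup with $\int\mathrm{e}^{-\lambda z}Q_t(a,\mathrm{d}z)=\mathrm{e}^{-av_t(\lambda)}$. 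First I would record that, under (5.a), the limit $\overline{v}_t:=\lim_{\lambda\to\infty}v_t(\lambda)$ is finite for every $t>0$: integrating \eqref{eq:def of v_t} gives $t=\int_{v_t(\lambda)}^{\lambda}\phi(u)^{-1}\,\mathrm{d}u$ for $\lambda$ large (where $\phi>0$), and letting $\lambda\to\infty$ together with (5.a) yields $t=\int_{\overline{v}_t}^{\infty}\phi(u)^{-1}\,\mathrm{d}u<\infty$. Consequently $Q_t(a,\{0\})=\lim_{\lambda\to\infty}\mathrm{e}^{-av_t(\lambda)}=\mathrm{e}^{-a\overline{v}_t}>0$.

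Next I would decompose $Q_t(y-x,\cdot)=\mathrm{e}^{-(y-x)\overline{v}_t}\delta_0+(1-\mathrm{e}^{-(y-x)\overline{v}_t})\rho_t$ with $\rho_t$ a probability measure, giving $P_t^0(y,\cdot)=\mathrm{e}^{-(y-x)\overline{v}_t}P_t^0(x,\cdot)+(1-\mathrm{e}^{-(y-x)\overline{v}_t})\,P_t^0(x,\cdot)\ast\rho_t$, whence
\[
\|P_t^0(x,\cdot)-P_t^0(y,\cdot)\|_{TV}=(1-\mathrm{e}^{-(y-x)\overline{v}_t})\,\|P_t^0(x,\cdot)-P_t^0(x,\cdot)\ast\rho_t\|_{TV}\le\min\{1,|x-y|\,\overline{v}_t\}.
\]
(The same bound also follows from the coupling $X^y=X^x+D$ of \eqref{eq:se cbi} with shared noise, $D$ being a branching process started at $y-x$ that is trapped at $0$.) It then remains to show $\overline{v}_t\le C\mathrm{e}^{-bt}$. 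Since $\phi(\lambda)=b\lambda+\tfrac12\sigma^2\lambda^2+\int_0^\infty(\mathrm{e}^{-\lambda z}-1+\lambda z)\,m(\mathrm{d}z)\ge b\lambda$ for $\lambda\ge0$, and since by the flow identity $v_{t+s}(\lambda)=v_t(v_s(\lambda))$ the map $t\mapsto\overline{v}_t$ solves $\partial_t\overline{v}_t=-\phi(\overline{v}_t)$ on $(0,\infty)$, Gronwall's lemma gives $\overline{v}_t\le\overline{v}_s\,\mathrm{e}^{-b(t-s)}$ for $0<s\le t$; fixing $s=1$ yields $\overline{v}_t\le C\mathrm{e}^{-bt}$ for $t\ge1$, and for $t\in[0,1]$ the trivial bound $\|\cdot\|_{TV}\le1$ suffices, so after enlarging $C$ the first displayed inequality of the theorem holds for all $t,x,y\ge0$.

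Finally, for the convergence to $\pi^0$ I would use invariance, $\pi^0=\int_{\mathbb{R}_{\geq0}}P_t^0(z,\cdot)\,\pi^0(\mathrm{d}z)$, and the convexity of $W_{d_{TV}}=\tfrac12\|\cdot\|_{TV}$ (Lemma \ref{lem:convexity of wasserstein distances}) with the coupling $\delta_x\otimes\pi^0$, followed by Jensen's inequality applied to the concave function $a\mapsto\min\{1,a\}$:
\[
\tfrac12\|P_t^0(x,\cdot)-\pi^0\|_{TV}\le\int_{\mathbb{R}_{\geq0}}\tfrac12\|P_t^0(x,\cdot)-P_t^0(z,\cdot)\|_{TV}\,\pi^0(\mathrm{d}z)\le\tfrac C2\min\Big\{1,\mathrm{e}^{-bt}\Big(x+\int_{\mathbb{R}_{\geq0}}z\,\pi^0(\mathrm{d}z)\Big)\Big\}.
\]
Because $\nu\equiv0$, condition \eqref{eq:integrability condition} holds trivially, so the first moment formula recalled in Section \ref{sec:CBI} gives $\mathbb{E}[X_t^x]=\mathrm{e}^{-bt}x+\beta b^{-1}(1-\mathrm{e}^{-bt})$; letting $t\to\infty$ and using the $W_1$-convergence $P_t^{0\ast}\delta_x\to\pi^0$ from Remark \ref{rem:exp ego in wasserstein distence w_1 cbi} identifies $\int_{\mathbb{R}_{\geq0}}z\,\pi^0(\mathrm{d}z)=\beta b^{-1}$, yielding the second displayed inequality. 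The strong Feller property is then immediate: for fixed $t>0$, $\sup_{\|f\|_\infty\le1}|P_t^0f(x)-P_t^0f(y)|=\|P_t^0(x,\cdot)-P_t^0(y,\cdot)\|_{TV}\le C\mathrm{e}^{-bt}|x-y|\to0$ as $y\to x$, and exponential ergodicity in total variation is the content of the $\pi^0$-estimate. I expect the only step requiring genuine care to be the identification of $\overline{v}_t<\infty$ with Grey's condition (5.a) and the ensuing atom computation $Q_t(a,\{0\})=\mathrm{e}^{-a\overline{v}_t}$, which is the crux of the argument; the remaining steps are routine bookkeeping with the convolution structure and the convexity of the Wasserstein distances.
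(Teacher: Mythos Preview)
The paper does not prove this theorem; it is quoted verbatim from Li and Ma \cite{MR3343292} and then used as a black box in the proof of Theorem~\ref{thm:exponential ergodicity cbi}. Your reconstruction is essentially the Li--Ma argument: Grey's condition (5.a) forces $\overline v_t=\lim_{\lambda\to\infty}v_t(\lambda)<\infty$ for $t>0$, hence the pure branching kernel $Q_t(a,\cdot)$ has an atom $\mathrm e^{-a\overline v_t}$ at $0$; the convolution identity $P_t^0(y,\cdot)=P_t^0(x,\cdot)\ast Q_t(y-x,\cdot)$ then yields the total variation bound $1-\mathrm e^{-|x-y|\overline v_t}\le\min\{1,|x-y|\overline v_t\}$, and the ODE $\partial_t\overline v_t=-\phi(\overline v_t)\le -b\,\overline v_t$ gives exponential decay of $\overline v_t$. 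The passage to $\pi^0$ via convexity, Jensen for $a\mapsto\min\{1,a\}$, and the identification $\int z\,\pi^0(\mathrm dz)=\beta/b$ is clean.

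There is one genuine slip. You write that for $t\in[0,1]$ ``the trivial bound $\|\cdot\|_{TV}\le1$ suffices, so after enlarging $C$ the first displayed inequality holds for all $t,x,y\ge0$.'' This is false: for small $|x-y|$ the right-hand side $C\min\{1,\mathrm e^{-bt}|x-y|\}$ equals $C\mathrm e^{-bt}|x-y|$, which can be made arbitrarily small, while the trivial bound only gives $1$. In fact no uniform $C$ can work as $t\to0^+$, since $\overline v_t\to\infty$ (and at $t=0$ the left-hand side equals $1$ for $x\neq y$ whereas the right-hand side is $C|x-y|$). This is not a defect of your method but of the statement as recorded in the paper: the estimate should be read for $t\ge t_0>0$ (equivalently, with $C$ depending on a fixed lower bound for $t$), which is exactly what your argument delivers via $\overline v_t\le \overline v_{t_0}\mathrm e^{-b(t-t_0)}$. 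For the purposes of Theorem~\ref{thm:exponential ergodicity cbi} this is harmless, since only the large-$t$ decay is used there.
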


Our main theorem extends the result of Li and Ma \cite{MR3343292} to CBI processes with non-vanishing jump measure $\nu$ for immigration.

\begin{thm}\label{thm:exponential ergodicity cbi}
Let $(\beta,b,\sigma,m,\nu)$ be admissible parameters with $b > 0$ and suppose that \eqref{eq:inequality w_log pi} and (5.a) are satisfied.
Let $\lbrace P_t\thinspace:\thinspace t\geq0\rbrace$ be the transition semigroup given by \eqref{eq:transition semigroup cbi} and let $\pi$ be the unique invariant distribution. Then the following holds:
\begin{enumerate}
\item[(a)] There exists a constant $C > 0$ such that
\[
\left\Vert P_t(x,\cdot)-P_t(y,\cdot)\right\Vert_{TV}\leq C \min\left\{ 1, \mathrm{e}^{-bt}\vert x-y\vert\right\}.
\]
In particular, $\lbrace P_t\thinspace:\thinspace t \geq 0\rbrace$ has the strong Feller property.
\item[(b)] There exists a constant $C > 0$ such that, for all $\varrho\in\mathcal{P}_{\log}(\mathbb{R}_{\geq0})$ and $t\geq0$, we have
\[
\left\Vert P_t^{\ast}\varrho-\pi\right\Vert_{TV}
\leq
C \min\{e^{-bt}, W_{\log}\left(\varrho,\pi\right)\} + C \mathrm{e}^{-bt}W_{\log}\left(\varrho,\pi\right).
\]
\end{enumerate}
\end{thm}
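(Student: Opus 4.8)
\textbf{Proof plan for Theorem \ref{thm:exponential ergodicity cbi}.}
The plan is to decompose the CBI process into its pure-branching part and its immigration part, and then combine the total-variation estimate of Li and Ma (Theorem \ref{prop:exponential ergodicity of P_t^0}) with the Wasserstein estimate of Theorem \ref{thm:exponential ergodicity in wasserstein distance of cbi processes}. Concretely, let $\lbrace P_t^0 \thinspace : \thinspace t \geq 0\rbrace$ denote the transition semigroup of the CBI process with admissible parameters $(\beta,b,\sigma,m,\nu\equiv 0)$. By the branching property (reading off the Laplace transforms as in Step 2 of the proof of Theorem \ref{thm:exponential ergodicity in wasserstein distance of cbi processes}) we have the convolution factorization $P_t(x,\cdot) = P_t^0(x,\cdot) \ast \eta_t$, where $\eta_t$ is the law at time $t$ of the CBI process with parameters $(0,b,\sigma,m,\nu)$ started at $0$; equivalently $\eta_t$ carries the extra $\nu$-immigration. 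Since $b>0$ and \eqref{eq:inequality w_log pi} holds, condition \eqref{eq:existence of inv measure for cbi condition} holds, so Theorem \ref{rem:invariant measure for cbi} gives a unique invariant distribution $\pi$, and $\pi \in \mathcal{P}_{\log}(\mathbb{R}_{\geq 0})$ by Theorem \ref{thm:exponential ergodicity in wasserstein distance of cbi processes}.

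For part (a): the total-variation distance is translation invariant, hence does not increase under convolution with a fixed probability measure, so
\[
\left\Vert P_t(x,\cdot) - P_t(y,\cdot)\right\Vert_{TV}
= \left\Vert \left(P_t^0(x,\cdot) - P_t^0(y,\cdot)\right)\ast \eta_t \right\Vert_{TV}
\leq \left\Vert P_t^0(x,\cdot) - P_t^0(y,\cdot)\right\Vert_{TV},
\]
and the right-hand side is bounded by $C\min\lbrace 1, \mathrm{e}^{-bt}|x-y|\rbrace$ by Theorem \ref{prop:exponential ergodicity of P_t^0}. The strong Feller property follows since the bound $\to 0$ as $y \to x$ for each fixed $t>0$.

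For part (b): write, for $\varrho \in \mathcal{P}_{\log}(\mathbb{R}_{\geq 0})$,
\[
\left\Vert P_t^{\ast}\varrho - \pi\right\Vert_{TV}
= \left\Vert P_{t/2}^{\ast}\left(P_{t/2}^{\ast}\varrho\right) - P_{t/2}^{\ast}\pi\right\Vert_{TV}
\leq \int_{\mathbb{R}_{\geq 0}\times\mathbb{R}_{\geq 0}} \left\Vert P_{t/2}(x,\cdot) - P_{t/2}(y,\cdot)\right\Vert_{TV} H(\mathrm{d}x,\mathrm{d}y)
\]
for any coupling $H$ of $(P_{t/2}^{\ast}\varrho,\pi)$, using invariance of $\pi$ and (the total-variation analogue of) Lemma \ref{lem:convexity of wasserstein distances}. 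Bounding the integrand by part (a) and choosing $H$ optimal for $W_{\log}$ (using that $\min\lbrace 1, \mathrm{e}^{-bt/2}|x-y|\rbrace \leq C\min\lbrace \mathrm{e}^{-bt/2}, \log(1+|x-y|)\rbrace + C\mathrm{e}^{-bt/2}\log(1+|x-y|)$, the same elementary inequality \eqref{eq:elementary log inequality} used before, together with $r \leq e r \wedge \text{something}$-type estimates) yields
\[
\left\Vert P_t^{\ast}\varrho - \pi\right\Vert_{TV}
\leq C\min\lbrace \mathrm{e}^{-bt/2}, W_{\log}\left(P_{t/2}^{\ast}\varrho,\pi\right)\rbrace + C\mathrm{e}^{-bt/2} W_{\log}\left(P_{t/2}^{\ast}\varrho,\pi\right).
\]
Finally I would invoke Theorem \ref{thm:exponential ergodicity in wasserstein distance of cbi processes} to bound $W_{\log}(P_{t/2}^{\ast}\varrho,\pi)$ by $C\min\lbrace \mathrm{e}^{-bt/2}, W_{\log}(\varrho,\pi)\rbrace + C\mathrm{e}^{-bt/2}W_{\log}(\varrho,\pi)$, and absorb all the $\mathrm{e}^{-bt/2}$ factors, adjusting the constant $C$ (and, if one wants the rate $\mathrm{e}^{-bt}$ rather than $\mathrm{e}^{-bt/2}$, iterating the semigroup splitting or noting that the product of two $\mathrm{e}^{-bt/2}$ terms already gives $\mathrm{e}^{-bt}$ in the dominant regime), to reach the claimed estimate.

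\textbf{Main obstacle.} The conceptually delicate point is part (b): one must get a total-variation bound on $\Vert P_t^{\ast}\varrho - \pi\Vert_{TV}$ out of a mere $W_{\log}$ control of the initial data, and there is no direct comparison between $W_{\log}$ and $\Vert\cdot\Vert_{TV}$. The resolution — splitting the time interval, using the strong Feller / total-variation contraction from part (a) on the second half and the $W_{\log}$-contraction on the first half — is exactly the ``$W_{\log} \Rightarrow TV$'' bootstrapping mechanism, and the only real work is the careful bookkeeping of the two terms in \eqref{eq:elementary log inequality} as they propagate through the coupling integral so that the final bound retains precisely the stated $\min\lbrace \mathrm{e}^{-bt}, W_{\log}(\varrho,\pi)\rbrace + \mathrm{e}^{-bt}W_{\log}(\varrho,\pi)$ shape. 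Verifying that $\eta_t$ (the extra-immigration law) is genuinely a probability measure, i.e. that the $\nu$-immigration CBI started at $0$ does not explode, is routine given \eqref{eq:measure conditions} and standard CBI theory, so this is not expected to cause difficulty.
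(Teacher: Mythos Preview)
Your part (a) is exactly the paper's argument: convolution factorization $P_t(x,\cdot) = P_t^0(x,\cdot) \ast \eta_t$, then Lemma~\ref{lem:convolution estimate} for the total-variation distance, then Theorem~\ref{prop:exponential ergodicity of P_t^0}.

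For part (b) your strategy is sound, but the time-splitting is an unnecessary detour and your claim about recovering the rate $\mathrm{e}^{-bt}$ does not go through as written. The paper does \emph{not} split the time interval: it takes $H$ to be an optimal $W_{\log}$-coupling of $(\varrho,\pi)$ directly (not of $(P_{t/2}^{\ast}\varrho,\pi)$), applies part (a) at time $t$, uses $1 \wedge a \leq \log(2)^{-1}\log(1+a)$ to pass to $\int \log(1+\mathrm{e}^{-bt}|x-y|)\,H(\mathrm{d}x,\mathrm{d}y)$, and then invokes \eqref{eq:elementary log inequality} with $a = \mathrm{e}^{-bt}$. This yields the stated bound with rate $\mathrm{e}^{-bt}$ in a single step. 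Theorem~\ref{thm:exponential ergodicity in wasserstein distance of cbi processes} enters only to guarantee $\pi \in \mathcal{P}_{\log}(\mathbb{R}_{\geq 0})$, not for the rate itself.

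By contrast, your intermediate bound has the form $C\min\{\mathrm{e}^{-bt/2}, W_{\log}(P_{t/2}^{\ast}\varrho,\pi)\} + C\mathrm{e}^{-bt/2}W_{\log}(P_{t/2}^{\ast}\varrho,\pi)$. The first summand is always $\leq C\mathrm{e}^{-bt/2}$, and substituting the estimate from Theorem~\ref{thm:exponential ergodicity in wasserstein distance of cbi processes} for $W_{\log}(P_{t/2}^{\ast}\varrho,\pi)$ does not improve this: take for instance $W_{\log}(\varrho,\pi)=1$, so that $W_{\log}(P_{t/2}^{\ast}\varrho,\pi) \leq 2C\mathrm{e}^{-bt/2}$, and the first summand is still of order $\mathrm{e}^{-bt/2}$ while the target is $2C\mathrm{e}^{-bt}$. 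Iterating the splitting does not help either, since each application of the $\min$ term reintroduces a factor $\mathrm{e}^{-bt/2}$. The fix is simply to drop the time-splitting; then your argument becomes the paper's.
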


\begin{proof} (a) Let $\lbrace P_t^0\thinspace:\thinspace t\geq0\rbrace$ be given by \eqref{eq:transition semigroup P_t^0} and let $\lbrace P_t^1\thinspace:\thinspace t \geq 0\rbrace$ be given by
\[
\int_{\mathbb{R}_{\geq0}}\mathrm{e}^{-\lambda z}P_t^1\left(x,\mathrm{d}z\right)
=
\exp\left(-\int_0^t\int_0^{\infty}\left(1-\mathrm{e}^{-v_s(\lambda)z}\right)\nu\left(\mathrm{d}z\right)\mathrm{d}s\right),\quad \lambda\geq0,
\]
where in both cases $v_t$ is obtained from \eqref{eq:def of v_t}.
By definition of the immigration mechanism we have, for all $\lambda \geq 0$,
\begin{align*}
\int_{\mathbb{R}_{\geq0}}\mathrm{e}^{-\lambda z}P_t\left(x,\mathrm{d}z\right)
&=
\int_{\mathbb{R}_{\geq0}}\mathrm{e}^{-\lambda z}P_t^0\left(x,\mathrm{d}z\right)
\int_{\mathbb{R}_{\geq0}}\mathrm{e}^{-\lambda z}P_t^1\left(x,\mathrm{d}z\right),
\end{align*}
yielding that $P_t(x,\cdot)=P_t^0(x,\cdot)\ast P_t^1(0,\cdot)$ for
all $t,\thinspace x \geq 0$. Combining the latter with Lemma \ref{lem:convolution estimate} (b), we deduce
\begin{align*}
\left\Vert P_t(x,\cdot) - P_t(x,\cdot)\right\Vert_{TV}
\leq
\left\Vert P_t^0(x,\cdot) - P_t^0(x,\cdot) \right\Vert_{TV}
\leq C \min\left\{ 1, \mathrm{e}^{-bt}\vert x-y\vert\right\},
\end{align*}
where the last inequality follows from Theorem \ref{prop:exponential ergodicity of P_t^0}.

(b) From Theorem \ref{thm:exponential ergodicity in wasserstein distance of cbi processes} we know that $\pi\in\mathcal{P}_{\log}(\mathbb{R}_{\geq0})$. Let $\varrho\in\mathcal{P}_{\log}(\mathbb{R}_{\geq0})$ and $H\in\mathcal{H}(\varrho,\pi)$ such that the infimum is attained
(see \eqref{EQ:INFATTAINED}).
Here and below we let $C > 0$ be a generic constant which may vary from line to line.
Using the invariance of $\pi$ combined with the convexity of the Wasserstein distance (see Lemma \ref{lem:convexity of wasserstein distances}), shows that
\begin{align*}
\left\Vert P_t^{\ast}\varrho-\pi\right\Vert_{TV}
&\leq
\int_{\mathbb{R}_{\geq0}\times\mathbb{R}_{\geq0}} \left\Vert P_t(x,\cdot)-P_t(y,\cdot)\right\Vert_{TV}H\left(\mathrm{d}x,\mathrm{d}y\right)
\\ &\leq
 C\int_{\mathbb{R}_{\geq0}\times\mathbb{R}_{\geq0}}\log\left(1+ \mathrm{e}^{-b t}\vert x-y\vert\right)H\left(\mathrm{d}x,\mathrm{d}y\right),
\end{align*}
where the last inequality follows from statement (a)
and $1\wedge a\leq \log(2)^{-1}\log(1+a)$ for all $a\geq0$.
Finally, using \eqref{eq:elementary log inequality} and the same estimates as in Step 2 of the proof of Theorem \ref{thm:exponential ergodicity in wasserstein distance of cbi processes}, we readily deduce that
\begin{align*}
\left\Vert P_t^{\ast}\varrho-\pi\right\Vert_{TV}
&\leq
C \min\left\lbrace \mathrm{e}^{-bt}, W_{\log}(\varrho, \pi)\right\rbrace
+ C e^{-bt} W_{\log}(\varrho, \pi).
\end{align*}
\end{proof}

The following remark shows that the obtained convergence has indeed exponential rate.

\begin{rem}\label{rem: tv standard presentation}
Under the assumptions of Theorem \ref{thm:exponential ergodicity cbi}, we obtain for all $x, \thinspace t \geq 0$
\begin{align*}
\left\Vert P_t(x,\cdot)-\pi(\cdot)\right\Vert_{TV}
&\leq
C\mathrm{e}^{-bt}\left(1+W_{\log}\left(\delta_x,\pi\right)\right)\\
&\leq
C\mathrm{e}^{-bt}\left(\log(1+x)+\int_{\mathbb{R}_{\geq0}}\log(1+y)\pi\left(\mathrm{d}y\right)\right).
\end{align*}
In particular, $\lbrace P_t\thinspace:\thinspace t\geq0\rbrace$ is exponentially ergodic in the total variation distance.
\end{rem}

\subsection{Functional central limit theorem}
\label{subsec:fclt cbi}

A direct consequence of our ergodic result is the following strong law of large numbers in accordance with the discussion after \cite[Proposition 2.5]{MR663900}.

\begin{cor}\label{cor:slln}
Under the conditions of Theorem \ref{thm:exponential ergodicity cbi},
for all Borel functions $f:\mathbb{R}_{\geq0}\to\mathbb{R}$ with $\int_0^{\infty}\vert f(x)\vert\pi(\mathrm{d}x)<\infty$, it holds
\[
\frac{1}{t}\int_0^{t}f(X_s^x)\mathrm{d}s\to\int_0^{\infty}f(x)\pi\left(\mathrm{d}x\right) \quad \text{a.s. as }t\to\infty.
\]
\end{cor}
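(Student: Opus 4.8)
The plan is to deduce the strong law of large numbers from Birkhoff's ergodic theorem for the \emph{stationary} CBI process and then to drop the stationarity assumption on the initial law by invoking the total variation convergence of Remark \ref{rem: tv standard presentation}; this is precisely the scheme described in the discussion following \cite[Proposition 2.5]{MR663900}.

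First I would work on the canonical c\`adl\`ag path space $\Omega_0 = D(\mathbb{R}_{\geq0},\mathbb{R}_{\geq0})$ carrying the coordinate process $(X_t)_{t\geq0}$, the shift semigroup $\theta_r\omega(\cdot) = \omega(r+\cdot)$, and, for $\mu\in\mathcal{P}(\mathbb{R}_{\geq0})$, the law $\mathbb{P}_\mu$ of the CBI process started from $\mu$. Under the hypotheses of Theorem \ref{thm:exponential ergodicity cbi}, $\pi$ is the \emph{unique} invariant distribution (Theorem \ref{rem:invariant measure for cbi}); hence $\mathbb{P}_\pi$ is $(\theta_r)$-invariant, and since the set of invariant probability measures of a Markov process on a Polish space forms a simplex whose extreme points are exactly the ergodic ones, $\mathbb{P}_\pi$ is in fact ergodic. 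Since $\int_0^\infty|f|\,\mathrm{d}\pi<\infty$, the functional $F(\omega):=f(\omega_0)$ lies in $L^1(\mathbb{P}_\pi)$, and the continuous-time Birkhoff ergodic theorem applied to $F$ gives
\[
\frac1t\int_0^t f(X_s)\,\mathrm{d}s\longrightarrow\int_0^\infty f(x)\,\pi(\mathrm{d}x)\qquad\mathbb{P}_\pi\text{-almost surely.}
\]
Let $A\subset\Omega_0$ denote the Borel set of paths on which this convergence holds, so that $\mathbb{P}_\pi(A)=1$.

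Next I would observe that discarding a finite initial segment of a trajectory leaves a Ces\`aro limit unchanged, so $A$ is, modulo null sets, invariant under every shift $\theta_t$. Setting $h(y):=\mathbb{P}_y(A)\in[0,1]$ --- a Borel function of $y$, since $y\mapsto\mathbb{P}_y$ is a kernel and $A$ is Borel --- the Markov property at time $t$ therefore yields, for all $x\geq0$ and all $t\geq0$,
\[
\mathbb{P}_x(A)=\mathbb{E}_x\bigl[\mathbbm{1}_A\circ\theta_t\bigr]=\mathbb{E}_x\bigl[\mathbb{P}_{X_t}(A)\bigr]=(P_th)(x).
\]
Letting $t\to\infty$ and using the total variation convergence of Remark \ref{rem: tv standard presentation},
\[
\Bigl|(P_th)(x)-\int_0^\infty h(y)\,\pi(\mathrm{d}y)\Bigr|\leq\bigl\|P_t(x,\cdot)-\pi\bigr\|_{TV}\longrightarrow0,
\]
so $\mathbb{P}_x(A)=\int_0^\infty h\,\mathrm{d}\pi=\mathbb{P}_\pi(A)=1$ for every $x\geq0$. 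As the coordinate process under $\mathbb{P}_x$ is a realization of $\{X_t^x:t\geq0\}$, this is exactly the asserted almost sure convergence.

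The only genuinely substantive ingredient is the ergodicity of $\mathbb{P}_\pi$, and this comes for free from the uniqueness of $\pi$ already established. The remaining points --- joint measurability of $(s,\omega)\mapsto f(X_s(\omega))$ via the c\`adl\`ag property, Borel measurability of $h$, the shift quasi-invariance of $A$, and the precise form of the continuous-time $L^1$ ergodic theorem --- are routine and are exactly those treated in the discussion after \cite[Proposition 2.5]{MR663900}. The step to keep an eye on is the transfer from $\mathbb{P}_\pi$ to $\mathbb{P}_x$: this is where the total variation convergence of Remark \ref{rem: tv standard presentation} enters, and it is what upgrades the conclusion from ``$\pi$-almost every starting point'' to ``every deterministic starting point $x$''.
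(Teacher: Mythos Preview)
Your approach is precisely what the paper intends: it offers no proof beyond citing the discussion after \cite[Proposition~2.5]{MR663900}, and that is the Birkhoff-plus-transfer scheme you spell out. One step, however, should not be labelled routine. The identity $\mathbb{P}_x(A)=\mathbb{E}_x[\mathbbm{1}_A\circ\theta_t]$ requires $A=\theta_t^{-1}(A)$ modulo $\mathbb{P}_x$-null (not merely $\mathbb{P}_\pi$-null) sets, and your justification --- ``discarding a finite initial segment leaves a Ces\`aro limit unchanged'' --- is valid only on the event $\{\int_0^t|f(X_s)|\,\mathrm{d}s<\infty\}$. Under $\mathbb{P}_\pi$ this event has full measure by Fubini and $f\in L^1(\pi)$, but under $\mathbb{P}_x$ nothing in the stated hypotheses guarantees it for a general Borel $f\in L^1(\pi)$: total variation convergence controls integrals of bounded functions only. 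What your argument actually yields is the one-sided inclusion $A\subset\theta_t^{-1}(A)$ together with $\theta_t^{-1}(A)\setminus A\subset\{\int_0^t|f(X_s)|\,\mathrm{d}s=\infty\}$, hence
\[
\mathbb{P}_x(A)\ \geq\ P_th(x)-\mathbb{P}_x\Bigl(\int_0^t|f(X_s)|\,\mathrm{d}s=\infty\Bigr),
\]
and the last probability still has to be shown to vanish.

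The gap is real but closable within the paper's framework. The exponential total-variation ergodicity of Remark~\ref{rem: tv standard presentation} implies positive Harris recurrence, and for positive Harris processes the strong law from \emph{every} starting point is a standard consequence of the regeneration structure; this is essentially the substance of the discussion you cite. Alternatively, under Grey's condition one can argue that $P_t(x,\cdot)\ll\pi$ for $t>0$, which gives both $h=1$ $P_t(x,\cdot)$-a.e.\ and the required local integrability on $[t,\infty)$. Either route completes your proof, but the point should be addressed explicitly rather than absorbed into ``modulo null sets''.
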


The latter convergence may be very useful for parameter estimation of one-dimensional CBI processes. A further consequence of our ergodicity result is the functional central limit theorem which is stated below.

Recall that the Feller semigroup $\lbrace P_t\thinspace:\thinspace t\geq0\rbrace$ given by \eqref{eq:transition semigroup cbi} has infinitesimal generator $(L,\mathrm{dom}(L))$ of the form \eqref{eq:generator cbi} acting on $C_c^2(\mathbb{R}_{\geq0})$. By virtue of \cite[Theorem 2.7]{MR1994043}, $C_c^{\infty}(\mathbb{R}_{\geq0})$ is a core of $L$ and $C_c^2(\mathbb{R}_{\geq0})\subset\mathrm{dom}(L)$. Since
$\Vert f\Vert_{L^2(\mathbb{R}_{\geq0},\pi)}\leq \Vert f\Vert_{\infty}$ for $f\in C_0(\mathbb{R}_{\geq0})$, $C_0(\mathbb{R}_{\geq0})\subset L^2(\mathbb{R}_{\geq0},\pi)$ is dense, and $P_t$ satisfies $\Vert P_tf\Vert_{L^2(\mathbb{R}_{\geq0},\pi)}\leq \Vert f\Vert_{L^2(\mathbb{R}_{\geq0},\pi)}$ for $f\in C_0(\mathbb{R}_{\geq0})$, there exists a unique extension $\lbrace\widehat{P}_t\thinspace:\thinspace t\geq0\rbrace$ on $L^2(\mathbb{R}_{\geq0},\pi)$. This extension is again a strongly continuous semigroup. Let $(\widehat{L},\mathrm{dom}(\widehat{L}))$ be its infinitesimal generator. Then $\mathrm{dom}(L)\subset\mathrm{dom}(\widehat{L})$ and $Lf=\widehat{L}f$ for all $f\in\mathrm{dom}(L)$.
Define $\mathrm{range}(\widehat{L})=\lbrace \widehat{L}f\thinspace:\thinspace f\in\mathrm{dom}(\widehat{L})\rbrace$. The next result is the announced functional central limit theorem for CBI processes.

\begin{cor}\label{cor:FCLT}
Under the conditions of Theorem \ref{thm:exponential ergodicity cbi}, for all $f\in\mathrm{range}(\widehat{L})$, it holds
\[
n^{-1/2}\int_0^{nt}f\left(X_s^x\right)\mathrm{d}s\to W_{\cdot}\quad\text{ weakly as } n\to\infty,
\]
where $W$ is a one-dimensional Wiener process with zero drift and variance parameter $\gamma$ given by
\begin{align}\label{eq:gamma}
\gamma^2=-2\int_{\mathbb{R}_{\geq0}}\widehat{L}f(y)\cdot f(y)\pi\left(\mathrm{d}y\right).
\end{align}
\end{cor}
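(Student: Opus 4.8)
The plan is to deduce Corollary \ref{cor:FCLT} from a general functional central limit theorem for additive functionals of stationary, ergodic Markov processes --- the Kipnis--Varadhan type theorem --- applied to the $L^2(\mathbb{R}_{\geq 0},\pi)$-extension $\{\widehat P_t : t\geq 0\}$ of the transition semigroup. The key point is that we have already established, in Theorem \ref{thm:exponential ergodicity cbi}, exponential ergodicity in total variation; together with $\pi$ being the unique invariant distribution this gives ergodicity of the stationary process in the sense needed, and it also yields an $L^2$-spectral gap-type decay that makes the asymptotic variance $\gamma^2$ in \eqref{eq:gamma} finite and the CLT valid.

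First I would observe that $\{\widehat P_t : t\geq 0\}$ is a strongly continuous contraction semigroup on $L^2(\mathbb{R}_{\geq 0},\pi)$ (as noted just before the statement), with generator $(\widehat L, \mathrm{dom}(\widehat L))$, and that $\pi$ is invariant. For $f\in\mathrm{range}(\widehat L)$, write $f = \widehat L g$ with $g\in\mathrm{dom}(\widehat L)$; then the corrector $g$ solves the Poisson equation $\widehat L g = f$, and the standard martingale decomposition
\[
\int_0^{nt} f(X_s^x)\,\mathrm{d}s = g(X_0^x) - g(X_{nt}^x) + M_{nt}^{(g)},
\]
where $M^{(g)}$ is a martingale with stationary increments (under $\pi$) and predictable quadratic variation growing linearly with rate $\gamma^2 = -2\int \widehat L g \cdot g\,\mathrm{d}\pi = -2\int \widehat L f\cdot f\,\mathrm{d}\pi$ once one identifies the carré-du-champ. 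The boundary terms $n^{-1/2}(g(X_0^x)-g(X_{nt}^x))$ vanish in probability because $g\in L^2(\pi)$ and, by exponential ergodicity in total variation (Theorem \ref{thm:exponential ergodicity cbi}), the law of $X_{nt}^x$ converges to $\pi$; hence the rescaled additive functional has the same weak limit as $n^{-1/2}M_{nt}^{(g)}$, to which the martingale functional CLT applies, producing a Wiener process with variance parameter $\gamma^2$. The precise reference for this chain of arguments, adapted to continuous-time Markov processes started from a fixed point $x$ rather than from $\pi$, is the functional CLT of Holzmann \cite{MR1994043} (Theorem 2.7 therein), whose hypotheses --- existence of the $L^2$-extension, $C_c^\infty$ a core, and convergence of $P_t(x,\cdot)$ to $\pi$ --- have all been verified in the paragraph preceding the corollary; so in fact the proof reduces to checking that these hypotheses hold and then invoking that theorem verbatim.

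The one step that needs a little care --- and that I expect to be the main obstacle --- is justifying that starting from a deterministic $x\geq 0$ (rather than from the stationarity measure $\pi$) does not affect the limit. This is exactly where Theorem \ref{thm:exponential ergodicity cbi}(a) enters: the total variation bound $\|P_t(x,\cdot)-\pi\|_{TV}\leq C\mathrm{e}^{-bt}(1+W_{\log}(\delta_x,\pi))$ from Remark \ref{rem: tv standard presentation} controls the difference between the law of the process started at $x$ and the stationary law on any finite time horizon, and, combined with the Markov property and a standard coupling, shows that the finite-dimensional distributions of $n^{-1/2}\int_0^{n\cdot}f(X_s^x)\,\mathrm{d}s$ and of the stationary version agree in the limit; tightness is inherited from the martingale part. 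I would therefore organize the proof as: (i) recall the setup of the $L^2(\pi)$-extension and the invariance of $\pi$; (ii) note that exponential ergodicity in total variation implies $P_t(x,\cdot)\to\pi$ weakly and handles the passage from $x$ to $\pi$; (iii) apply \cite[Theorem 2.7]{MR1994043} to obtain the functional CLT with variance \eqref{eq:gamma}, the formula for $\gamma^2$ being the standard Dirichlet-form expression $-2\langle \widehat L f, f\rangle_{L^2(\pi)}$.
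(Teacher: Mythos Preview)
The paper does not supply a standalone proof of this corollary; it treats the result as an immediate application of Bhattacharya's functional central limit theorem \cite{MR663900}, relying on the fact that exponential ergodicity in total variation (Theorem \ref{thm:exponential ergodicity cbi}) together with the $L^2(\pi)$-extension set up in the preceding paragraph furnish all of Bhattacharya's hypotheses. Your Poisson-equation/martingale-decomposition strategy is precisely Bhattacharya's method, so conceptually you are on the same track as the paper.

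Two points need correction. First, your citation is misdirected: \cite{MR1994043} is Duffie--Filipovi\'c--Schachermayer, and their Theorem 2.7 concerns $C_c^\infty(\mathbb{R}_{\ge 0})$ being a core for an affine generator---this is exactly what the paper invokes it for in the paragraph before the corollary, and it has nothing to do with functional CLTs. The reference you want is \cite{MR663900}. Second, you assert without argument that
\[
-2\int_{\mathbb{R}_{\ge 0}} \widehat L g \cdot g\,\mathrm{d}\pi \;=\; -2\int_{\mathbb{R}_{\ge 0}} \widehat L f \cdot f\,\mathrm{d}\pi
\]
with $f=\widehat L g$. These are not equal in general; the standard asymptotic variance coming from the martingale part is the left-hand side, $-2\langle \widehat L g,g\rangle_{L^2(\pi)}=-2\langle f,g\rangle_{L^2(\pi)}$. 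You have simply matched the symbol appearing in \eqref{eq:gamma} rather than verifying an identity. Either the $f$ in \eqref{eq:gamma} is to be read as the solution of the Poisson equation (in which case your left-hand expression already agrees with it), or the displayed formula needs adjusting; in either case, the equality you wrote is not a step in a proof.
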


In general it is unlikely to find an explicit formula for \eqref{eq:gamma} in terms
of its admissible parameters. However, for the particular case $Lf_{\lambda} \in \mathrm{range}(\widehat{L})$, where $f_{\lambda}(y)=\exp(-\lambda y)$, we obtain the following.

\begin{exmp}
For each $\lambda > 0$ and $x \geq 0$ we see that $n^{-1/2}\int_0^{nt}(\widehat{L}f_{\lambda})(X_s^x)\mathrm{d}s$ converges weakly as $n\to\infty$ to a Wiener process with zero drift and variance parameter $\gamma$ given by
\[
\gamma^2=-2\int_{\mathbb{R}_{\geq0}}\widehat{L}f_{\lambda}(y)\cdot f_{\lambda}(y)\pi\left(\mathrm{d}y\right).
\]
Furthermore, an easy calculation shows that
$\widehat{L}f_{\lambda}(y)=Lf_{\lambda}(y)=f_{\lambda}(y)(-\psi(\lambda)+y\phi(\lambda))$ and thereby
\[
\gamma^2=2\psi(\lambda)\int_{\mathbb{R}_{\geq0}}\mathrm{e}^{-2\lambda y}\pi\left(\mathrm{d}y\right)-2\phi(\lambda)\int_{\mathbb{R}_{\geq0}}y\mathrm{e}^{-2\lambda y}\pi\left(\mathrm{d}y\right).
\]
Recall that the Laplace transform of $\pi$ is given in \eqref{eq:laplace transform of pi}. By a change of variables $u=v_s(\lambda)$, we see that
\[
\int_0^{\infty} \mathrm{e}^{-\lambda y}\pi\left(\mathrm{d}y\right)=\exp\left(-\int_0^{\lambda}\frac{\psi(u)}{\phi(u)}\mathrm{d}u\right),\quad u\in\mathbb{R}_{\geq0},
\]
see also \cite[Formula (3.27)]{MR2390186}. Therefore, we have
\begin{align*}
\gamma^2
&=
2\psi(\lambda)\exp\left(-\int_0^{2\lambda}\frac{\psi(u)}{\phi(u)}\mathrm{d}u\right)
+\phi(\lambda)\frac{\mathrm{d}}{\mathrm{d}\lambda}\exp\left(-\int_0^{2\lambda}\frac{\psi(u)}{\phi(u)}\mathrm{d}u\right)\\
&=
\left(2\psi(\lambda)+\frac{\phi(\lambda)\psi(2\lambda)}{\phi(2\lambda)}\right)
\exp\left(-\int_0^{2\lambda}\frac{\psi(u)}{\phi(u)}\mathrm{d}u\right).
\end{align*}
\end{exmp}

\section{Continuous-state branching processes with immigration in L\'evy random environments}
\label{sec:CBIRE}

Let $(\beta, b, \sigma, m, \nu)$ be admissible parameters,
 $b_{E} \in \mathbb{R}$, $\sigma_{E} \geq 0$ and $\mu_E$ a L\'evy measure on $\mathbb{R}$.
We start with a brief description of CBIRE processes.
Let $(\Omega, \mathcal{F}, (\mathcal{F}_t)_{t \geq 0}, \mathbb{P})$ be a stochastic basis satisfying the usual conditions rich enough to support
\begin{itemize}
\item a $(\mathcal{F}_t)_{t \geq 0}$-Gaussian white noise
$W(\mathrm{d}t,\mathrm{d}u)$ with intensity $\mathrm{d}t\mathrm{d}u$;
\item a $(\mathcal{F}_t)_{t \geq 0}$-Poisson random measure
$N_0(\mathrm{d}t,\mathrm{d}z,\mathrm{d}u)$ on $\mathbb{R}_{\geq0}\times\mathbb{R}_{\geq0}$ with intensity $\mathrm{d}t m(\mathrm{d}z)$;
\item a $(\mathcal{F}_t)_{t \geq 0}$-Poisson random measure $N_1(\mathrm{d}t,\mathrm{d}z)$ on $\mathbb{R}_{\geq0}\times\mathbb{R}_{\geq0}$ with intensity $\mathrm{d}t\nu(\mathrm{d}z)$;
\item a $(\mathcal{F}_t)_{t \geq 0}$-Brownian motion $\lbrace B_t\thinspace:\thinspace t\geq0\rbrace$;
\item a $(\mathcal{F}_t)_{t \geq 0}$-Poisson random measure $M(\mathrm{d}t,\mathrm{d}z)$ on $\mathbb{R}_{\geq 0} \times \mathbb{R}$ with intensity $\mathrm{d}t \mu_E(\mathrm{d}z)$.
\end{itemize}
Suppose that these random objects are mutually independent.
Define two $(\mathcal{F}_t)_{t \geq 0}$-L\'evy processes $\lbrace \xi_t\thinspace:\thinspace t \geq 0\rbrace$ and $\lbrace Z_t\thinspace:\thinspace t \geq 0\rbrace$ by
\begin{align*}
 \xi_t &= a_E t + \sigma_E B_t + \int_0^t\int_{[-1,1]}z \widetilde{M}(\mathrm{d}s,\mathrm{d}z) + \int_{0}^t \int_{[-1,1]^c} z M(\mathrm{d}s,\mathrm{d}z),
 \\ Z_t &= b_E t + \sigma_E B_t + \int_0^t\int_{[-1,1]}(e^z - 1) \widetilde{M}(\mathrm{d}s,\mathrm{d}z) + \int_{0}^t \int_{[-1,1]^c} (e^z-1) M(\mathrm{d}s,\mathrm{d}z),
\end{align*}
where $[-1,1]^c=\mathbb{R}\backslash[-1,1]$, $\widetilde{M}(\mathrm{d}s,\mathrm{d}z):=M(\mathrm{d}s,\mathrm{d}z)-\mathrm{d}t\mu_E(\mathrm{d}z)$, and the drift coefficients $b_E$ and $a_E$ are related by
\[
b_E = a_E + \frac{\sigma_E^2}{2} + \int_{[-1,1]}( e^z - 1 - z) \mu_E(\mathrm{d}z).
\]
Note that $\lbrace Z_t\thinspace:\thinspace t \geq 0\rbrace$ has no jump less than $-1$.
According to \cite[Theorem 5.1]{MR3866603}, we have that
\begin{align}
X_t
&=
X_0 + \int_0^t \left(\beta-bX_s\right)\mathrm{d}s+\sigma\int_0^t\int_0^{\infty}\mathbbm{1}_{\lbrace u\leq X_{s}\rbrace} W\left(\mathrm{d}s,\mathrm{d}u\right) \nonumber \\
&\quad+
\int_0^t\int_0^{\infty}\int_0^{\infty}z\mathbbm{1}_{\lbrace u\leq X_{s-}\rbrace}\widetilde{N}_0\left(\mathrm{d}s,\mathrm{d}z,\mathrm{d}u\right)+
\int_0^t\int_0^{\infty}zN_1\left(\mathrm{d}s,\mathrm{d}z\right)
+ \int_0^t X_{s-}\mathrm{d}Z_s \label{eq:se cbi environment}
\end{align}
has for each $\mathcal{F}_0$-measurable random variable $X_0 \geq 0$ a pathwise unique nonnegative strong solution $\lbrace X_t\thinspace:\thinspace t\geq0\rbrace$. It is not difficult to see that the Markov generator of $\lbrace X_t\thinspace:\thinspace t\geq0\rbrace$ acting on $C_c^{\infty}(\mathbb{R}_{\geq0})$ is given by $L_0+ L_1$, where $L_0$ is defined by \eqref{eq:generator cbi} and $L_1$ by \eqref{eq:generator  CBIRE}, respectively.
In view of \cite[Theorem 5.4]{MR3866603}, the Markov process $\lbrace X_t\thinspace:\thinspace t\geq0\rbrace$ has Feller transition semigroup $\lbrace P_t\thinspace:\thinspace t \geq 0\rbrace$ and its transition probabilities $P_t(x,\mathrm{d}y)$ satisfy
\begin{align}
\int_{\mathbb{R}_{\geq 0}} \mathrm{e}^{- \lambda y}P_t(x,\mathrm{d}y)
= \mathbb{E}\left[ \exp\left( - x v_{0,t}^{\xi}(\lambda) - \int_{0}^{t} \psi\left(v_{s,t}^{\xi}(\lambda)\right) \mathrm{d}s\right) \right],\label{EQ:02}
\end{align}
where $\phi$ and $\psi$ are the corresponding branching and immigration mechanisms for the CBI process with admissible parameters $(\beta,b,\sigma,m,\nu)$
and $r \mapsto v_{r,t}^{\xi}(\lambda)$ is the pathwise unique nonnegative solution to
\[
v_{r,t}^{\xi}(\lambda) = \mathrm{e}^{\xi(t) - \xi(r)}\lambda - \int_{r}^{t}\mathrm{e}^{\xi(s) - \xi(r)}\phi\left(v_{s,t}^{\xi}(\lambda)\right) \mathrm{d}s, \quad 0 \leq r \leq t.
\]
Existence of limiting distribution was recently characterized in
\cite{MR3866603}.
In the following we present sufficient conditions for both the ergodicity in the Wasserstein and total variation distance based on an application of Theorem \ref{thm:exponential ergodicity}. We start with the simpler case of ergodicity in Wasserstein distance $W_1$.

\begin{thm}\label{thm:ergodicity in Wasserstein distance CBIRE}
Let $(\beta, b, \sigma, m, \nu)$ be admissible parameters,
$b_{E} \in \mathbb{R}$, $\sigma_{E} \geq 0$ and $\mu_E$ a L\'evy measure on $\mathbb{R}$. Let $\lbrace P_t\thinspace:\thinspace t\geq0\rbrace$ be the transition semigroup with transition probabilities defined by \eqref{EQ:02} and denote by $\lbrace P_t^{\ast}\thinspace:\thinspace t\geq0\rbrace$ the dual semigroup.
Suppose that
\begin{align}\label{EQ:03}
\int_{(1,\infty)} z \nu(\mathrm{d}z) + \int_{(1,\infty)}\mathrm{e}^{z} \mu_E(\mathrm{d}z) < \infty \quad \text{ and } \quad b > \mathbb{E}[ Z_1 ].
\end{align}
Then, for all $\varrho,\thinspace\widetilde{\varrho}\in\mathcal{P}_1(\mathbb{R}_{\geq0})$, we have
\[
W_1\left(P_t^{\ast}\varrho,P_t^{\ast}\widetilde{\varrho}\right)\leq \mathrm{e}^{-(b-\mathbb{E}[Z_1]) t} W_1\left(\varrho,\widetilde{\varrho}\right),\quad t\geq0.
\]
In particular, there exists a unique invariant distribution $\pi\in\mathcal{P}(\mathbb{R}_{\geq0})$. Moreover, $\pi$  belongs to $\mathcal{P}_1(\mathbb{R}_{\geq0})$ and, for all $\varrho\in\mathcal{P}_1(\mathbb{R}_{\geq0})$,
\[
W_1\left(P_t^{\ast}\varrho,\pi\right)\leq \mathrm{e}^{-(b-\mathbb{E}[Z_1])t}W_1\left(\varrho,\pi\right),\quad t\geq0.
\]
\end{thm}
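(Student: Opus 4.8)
The plan is to realise the stochastic equation \eqref{eq:se cbi environment} as a special case of \eqref{eq:se} and then to appeal to Theorem~\ref{thm:exponential ergodicity}. Decomposing $\{Z_t\}$ into its L\'evy--It\^o components, the environment integral reads $\int_0^t X_{s-}\,\mathrm{d}Z_s = b_E\int_0^t X_s\,\mathrm{d}s + \sigma_E\int_0^t X_s\,\mathrm{d}B_s + \int_0^t\int_{[-1,1]}X_{s-}(e^z-1)\widetilde M(\mathrm{d}s,\mathrm{d}z) + \int_0^t\int_{[-1,1]^c}X_{s-}(e^z-1) M(\mathrm{d}s,\mathrm{d}z)$. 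I would therefore take $U_1$ to be the disjoint union of $\mathbb{R}_{\geq 0}$ (carrying $\nu$, $g_1(x,z)=z$) and $[-1,1]^c$ (carrying $\mu_E$, $g_1(x,z)=x(e^z-1)$); $U_0$ the disjoint union of $\mathbb{R}_{\geq 0}\times\mathbb{R}_{\geq 0}$ (carrying $m(\mathrm{d}z)\mathrm{d}u$, $g_0(x,z,u)=z\mathbbm{1}_{\{u\leq x\}}$) and $[-1,1]$ (carrying $\mu_E$, $g_0(x,z)=x(e^z-1)$); $E$ a disjoint union of $\mathbb{R}_{\geq 0}$ and one atom so that the Gaussian part produces $\sigma^2 x+\sigma_E^2 x^2$ in the angle bracket; and $b(x)=\beta+(b_E-b)x$. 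The uncompensated jump terms then contribute $\int_0^\infty z\,\nu(\mathrm{d}z)$ (a constant) and $x\int_{[-1,1]^c}(e^z-1)\mu_E(\mathrm{d}z)$ to the drift of the mean, so the effective drift of \eqref{eq:se} in the sense of (3.e) is $\widetilde b(x)=\mathrm{const}-(b-\mathbb{E}[Z_1])x$, using $\mathbb{E}[Z_1]=b_E+\int_{[-1,1]^c}(e^z-1)\mu_E(\mathrm{d}z)$; hence (3.e) holds with $A=b-\mathbb{E}[Z_1]>0$, which matches the asserted rate.

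I would then verify the remaining hypotheses. Admissibility and the linear growth bound (3.a) reduce to $\int_{(0,1]}z\,\nu(\mathrm{d}z)<\infty$ (admissibility), $\int_{(1,\infty)}z\,\nu(\mathrm{d}z)<\infty$, and $\int_{[-1,1]^c}|e^z-1|\mu_E(\mathrm{d}z)<\infty$, the last being equivalent to $\int_{(1,\infty)}e^z\mu_E(\mathrm{d}z)<\infty$ because $\mu_E([-1,1]^c)<\infty$; all are granted by \eqref{EQ:03}. Conditions (3.b)--(3.c) for the CBI blocks are as in Section~\ref{sec:CBI}, and for the environment blocks they follow from $|e^z-1|\leq C|z|$ on $[-1,1]$ together with $\int_{[-1,1]}z^2\mu_E(\mathrm{d}z)<\infty$; one subtlety is that $x\mapsto x(e^z-1)$ fails to be nondecreasing for negative $z$, but since $Z$ has no jump $\leq-1$ one still has $x+x(e^z-1)=xe^z\geq 0$ and the comparison property of \eqref{eq:se cbi environment} holds by \cite[Theorem~5.1]{MR3866603}. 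The delicate point is the martingale condition (3.d): the term $\sigma_E\int_0^t X_s\,\mathrm{d}B_s$ and the compensated environment jumps over $[-1,1]$ contribute $\sigma_E^2\int_0^t X_s^2\,\mathrm{d}s$ and a comparable quantity to $[M]_t$, so a literal verification would require a second moment bound on $X_t^x$, which is not available under \eqref{EQ:03} alone.

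To bypass this I would not invoke Theorem~\ref{thm:exponential ergodicity} verbatim but instead establish its core estimate \eqref{eq:moment estimate} directly and then re-run Steps~2--4 of its proof, which use only \eqref{eq:moment estimate} and the convexity and completeness of $W_1$. For $0\leq x\leq y$ the comparison property gives $X_t^x\leq X_t^y$ almost surely, so $\mathbb{E}[|X_t^x-X_t^y|]=\mathbb{E}[X_t^y]-\mathbb{E}[X_t^x]$, and the first moment follows from \eqref{EQ:02}. Differentiating \eqref{EQ:02} at $\lambda=0^+$ and using $v_{s,t}^\xi(0)=0$, $\phi(0)=\psi(0)=0$, $\phi'(0)=b$, $\psi'(0)=\beta+\int_0^\infty z\,\nu(\mathrm{d}z)$ one obtains $\mathbb{E}[X_t^x]=x\,\mathbb{E}[w_{0,t}]+\psi'(0)\int_0^t\mathbb{E}[w_{s,t}]\,\mathrm{d}s$, where $w_{r,t}:=\partial_\lambda v_{r,t}^\xi(0)$ solves the pathwise linear equation $w_{r,t}=e^{\xi(t)-\xi(r)}-b\int_r^t e^{\xi(s)-\xi(r)}w_{s,t}\,\mathrm{d}s$, whose solution is $w_{r,t}=e^{\xi(t)-\xi(r)-b(t-r)}$. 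Since $\mathbb{E}[e^{\xi_1}]=e^{\mathbb{E}[Z_1]}$ (from the relation between $b_E$ and $a_E$, finite by the first part of \eqref{EQ:03}), the stationary independent increments of $\xi$ give $\mathbb{E}[w_{r,t}]=e^{-(b-\mathbb{E}[Z_1])(t-r)}$, whence $\mathbb{E}[X_t^x]=x e^{-(b-\mathbb{E}[Z_1])t}+\psi'(0)\int_0^t e^{-(b-\mathbb{E}[Z_1])(t-s)}\,\mathrm{d}s$ and $\mathbb{E}[|X_t^x-X_t^y|]=|x-y|e^{-(b-\mathbb{E}[Z_1])t}$, i.e.\ \eqref{eq:moment estimate} with $A=b-\mathbb{E}[Z_1]$. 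Steps~2--4 of the proof of Theorem~\ref{thm:exponential ergodicity} then yield \eqref{eq:exp ergodicity}, the existence and uniqueness of $\pi\in\mathcal{P}_1(\mathbb{R}_{\geq 0})$, and the stated convergence to $\pi$. I expect the main obstacle to be the rigorous justification of differentiating under the expectation in \eqref{EQ:02}, equivalently the uniform integrability required to pass from the Laplace transform to $\mathbb{E}[X_t^x]$, together with the solvability of the random linear equation for $w_{r,t}$ along paths of $\xi$; by contrast, (3.a)--(3.c) and the comparison property are routine given \cite{MR3866603}.
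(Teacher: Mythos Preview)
Your overall plan---cast \eqref{eq:se cbi environment} as a special instance of \eqref{eq:se} via disjoint-union constructions for $E$, $U_0$, $U_1$ and then invoke Theorem~\ref{thm:exponential ergodicity}---is exactly what the paper does. The paper writes out essentially the same choices of $b$, $\sigma$, $g_0$, $g_1$ and then simply asserts that (3.a)--(3.e) are all satisfied, without further verification.

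Where your proposal departs from the paper is in taking the verification of (3.c) and (3.d) seriously, and here your concerns are legitimate. The monotonicity clause in (3.c) is indeed violated by the environment small-jump block $g_0(x,z)=x(e^z-1)$ for $z\in[-1,0)$, and (3.d) for the terms $\sigma_E\int_0^t X_s\,\mathrm{d}B_s$ and $\int_0^t\int_{[-1,1]}X_{s-}(e^z-1)\,\widetilde M(\mathrm{d}s,\mathrm{d}z)$ would in a direct check require $\mathbb{E}[\int_0^t X_s^2\,\mathrm{d}s]<\infty$, which \eqref{EQ:03} alone does not guarantee (no second-moment hypothesis on $m$ or $\nu$ is made). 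The paper does not address either point. Your workaround---computing $\mathbb{E}[X_t^x]$ directly from the Laplace representation \eqref{EQ:02}, solving the linearised random equation to get $w_{r,t}=e^{\xi(t)-\xi(r)-b(t-r)}$ and hence $\mathbb{E}[w_{r,t}]=e^{-(b-\mathbb{E}[Z_1])(t-r)}$ via $\mathbb{E}[e^{\xi_1}]=e^{\mathbb{E}[Z_1]}$, and then re-running Steps~2--4 of Theorem~\ref{thm:exponential ergodicity}---is correct and strictly more careful than the paper's argument. The differentiation under the expectation is easily justified by monotone convergence, since $\lambda^{-1}(1-e^{-\lambda y})\uparrow y$ as $\lambda\downarrow 0$.

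One small correction: \cite[Theorem~5.1]{MR3866603} establishes existence and pathwise uniqueness for \eqref{eq:se cbi environment}, not a comparison principle. Comparison for CBIRE does hold, but you should justify it separately---for instance by conditioning on the environment path $\xi$ (under which the process becomes a time-inhomogeneous CBI and comparison is classical), or by noting that although $g_0(\cdot,z)$ is not monotone for $z<0$, the map $x\mapsto x+g_0(x,z)=xe^z$ is nondecreasing, which is what the comparison argument actually uses at each jump time.
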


\begin{proof}
Let us first verify that \eqref{eq:se cbi environment} is a particular case of \eqref{eq:se}. The drift coefficient is simply given by $b(x) = b_1(x) - b_2(x)$
with $b_1(x) = \beta - (b-b_E)x$ and $b_2(x) = 0$ for $x\in\mathbb{R}_{\geq0}$.
For the diffusion component set $E = \{1,2\} \times \mathbb{R}_{\geq 0}$
and $\varkappa(\mathrm{d}y,\mathrm{d}u) = \delta_1(\mathrm{d}y)\mathrm{d}u + \delta_2(\mathrm{d}y)\delta_0(\mathrm{d}u)$ on $E$.
Then $\mathcal{W}(\mathrm{d}s,\mathrm{d}y, \mathrm{d}u) := \delta_1(\mathrm{d}y)W(\mathrm{d}s,\mathrm{d}u) + \mathrm{d}B_s \delta_2(\mathrm{d}y)\delta_0( \mathrm{d}u)$
defines an $(\mathcal{F}_t)_{t \geq 0}$-Gaussian white noise on $\mathbb{R}_{\geq 0}\times E$ with intensity measure $\mathrm{d}s \varkappa(\mathrm{d}y, \mathrm{d}u)$.
Let $\sigma(x,y,u) := \sigma \mathbbm{1}_{ \{ y = 1\} } \mathbbm{1}_{ \{ u \leq x \} } + \sigma_E \mathbbm{1}_{ \{y = 2\} } \mathbbm{1}_{ \{0\}}(u)x$ for $(x,y,u)\in\mathbb{R}_{\geq0}\times E$. We see that
\[
\int_{0}^{t}\int_E \sigma\left(X_s, y, u\right)\mathcal{W}\left(\mathrm{d}s, \mathrm{d}y, \mathrm{d}u\right)
= \sigma \int_{0}^{t}\int_{\mathbb{R}_{\geq 0}}\mathbbm{1}_{ \{ u \leq X_s\} } W\left(\mathrm{d}s, \mathrm{d}u\right) + \sigma_E \int_0^t X_s \mathrm{d}B_s.
\]
Turning to the jump components, define $U_0 = \{1,2\} \times \mathbb{R} \times \mathbb{R}_{\geq 0}$
and further $\mu_0(\mathrm{d}y,\mathrm{d}z,\mathrm{d}u) = \mathbbm{1}_{\mathbb{R}_{\geq0}}(z)\delta_1(\mathrm{d}y)m(\mathrm{d}z)\mathrm{d}u + \delta_2(\mathrm{d}y)\mu_E(\mathrm{d}z)\delta_0(\mathrm{d}u)$ on $U_0$.
Then we have that $\mathcal{N}_0(\mathrm{d}s, \mathrm{d}y, \mathrm{d}z, \mathrm{d}u) := \mathbbm{1}_{\mathbb{R}_{\geq0}}(z)\delta_1(\mathrm{d}y)N_0(\mathrm{d}s, \mathrm{d}z,\mathrm{d}u)
+ \delta_2(\mathrm{d}y)M(\mathrm{d}s, \mathrm{d}z)\delta_0(\mathrm{d}u)$
defines a $(\mathcal{F}_t)_{t \geq 0}$-Poisson random measure on $\mathbb{R}_{\geq 0}\times U_0$ with intensity $\mathrm{d}s\mu_0(\mathrm{d}y,\mathrm{d}z,\mathrm{d}u)$.
Letting $g_0(x,y,z,u) = \mathbbm{1}_{ \{y = 1\}}\mathbbm{1}_{ \{ u \leq x\} }\mathbbm{1}_{\mathbbm{R}_{\geq 0}}(z) z+ \mathbbm{1}_{ \{ y = 2\} }\mathbbm{1}_{[-1,1]}(z) (\exp(z)-1)x$ for $(x,y,z,u)\in\mathbb{R}_{\geq0}\times U_0$ yields
\begin{align*}
\int_0^t \int_{U_0} g_0(X_s,y,z,u)\widetilde{\mathcal{N}}_0(\mathrm{d}s, \mathrm{d}y, \mathrm{d}z,\mathrm{d}u)
&= \int_0^t \int_0^{\infty}\int_{0}^{\infty}z \mathbbm{1}_{ \{ u \leq X_s \} } \widetilde{N}_0(\mathrm{d}s, \mathrm{d}z, \mathrm{d}u)\\
&\quad + \int_0^t \int_{-1}^{1}\left(\mathrm{e}^z - 1\right)X_s \widetilde{M}(\mathrm{d}s, \mathrm{dz})
\end{align*}
Finally, let $U_1 = \{1,2\} \times \mathbb{R}$
and define $\mu_1(\mathrm{d}y, \mathrm{d}z) = \mathbbm{1}_{\mathbb{R}_{\geq0}}(z)\delta_1(\mathrm{d}y)\nu(\mathrm{d}z) + \delta_2(\mathrm{d}y)\mu_E(\mathrm{d}z)$ on $U_1$. Then
$\mathcal{N}_1(\mathrm{d}s, \mathrm{d}y, \mathrm{d}z) = \delta_1(\mathrm{d}y)N_1(\mathrm{d}s,\mathrm{d}z) + \delta_2(\mathrm{d}y)M(\mathrm{d}s,\mathrm{d}z)$
defines a $(\mathcal{F}_t)_{t \geq 0}$-Poisson random measure on $\mathbb{R}_{\geq0}\times U_1$ with intensity
$\mathrm{d}s \mu_1(\mathrm{d}y,\mathrm{d}z)$.
Letting $g_1(x,y,z) = \mathbbm{1}_{ \{ y = 1\} } \mathbbm{1}_{\mathbb{R}_{\geq 0}}(z)z + \mathbbm{1}_{ \{ y = 2 \} } \mathbbm{1}_{[-1,1]^c}(z)(\exp(z) - 1)x$ for $(x,y,z)\in\mathbb{R}_{\geq0}\times U_1$ yields
\begin{align*}
\int_0^t \int_{U_1} g_1(X_s,y,z)\mathcal{N}_1(\mathrm{d}s, \mathrm{d}y, \mathrm{d}z)
= \int_0^t \int_0^{\infty}z N_1(\mathrm{d}s, \mathrm{d}z)
+ \int_0^t \int_{[-1,1]^c}\left(\mathrm{e}^z - 1\right)X_s M(\mathrm{d}s, \mathrm{d}z).
\end{align*}
This shows that \eqref{eq:se cbi environment} is indeed a particular case of \eqref{eq:se}. It is not difficult to see that conditions (3.a)-(3.e) are satisfied which completes the proof.
\end{proof}

\begin{thm}\label{thm:ergodicity in tv distance CBIRE}
Let $(\beta, b, \sigma, m, \nu)$ be admissible parameters,
$b_{E} \in \mathbb{R}$, $\sigma_{E} \geq 0$ and $\mu_E$ a L\'evy measure on $\mathbb{R}$.  Let $\lbrace P_t\thinspace:\thinspace t\geq0\rbrace$ be the transition semigroup with transition probabilities defined by \eqref{EQ:02} and denote by $\lbrace P_t^{\ast}\thinspace:\thinspace t\geq0\rbrace$ the dual semigroup. Suppose that \eqref{EQ:03}
and Grey's condition (5.a) is satisfied.
Let $\pi$ be the unique invariant distribution.
Then, for any $\varrho \in \mathcal{P}_{1}(\mathbb{R}_{\geq 0})$,
\begin{align}\label{EQ:07}
\| P_t^* \varrho - \pi \|_{TV} \leq 2 \mathbb{E}\left[ \overline{v}_{0,t}^{\xi} \right] W_{1}(\varrho, \pi), \quad t \geq 0,
\end{align}
where $\overline{v}_{0,t}^{\xi} := \lim_{\lambda \to \infty}v_{0,t}^{\xi}(\lambda) \in [0,\infty)$.
If, in addition, $\liminf_{t \to \infty}\xi(t) = - \infty$
almost surely, then
\begin{align}\label{EQ:06}
\lim_{t \to \infty} \left\Vert P_t^* \varrho - \pi \right\Vert_{TV} = 0.
\end{align}
\end{thm}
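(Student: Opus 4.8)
The plan is to condition on the random environment and reduce everything to a quenched estimate for the pure branching part. Write $\mathcal{G}$ for the $\sigma$-field generated by $B$ and $M$, so that $\xi$ and all the functionals $v_{s,t}^{\xi}(\lambda)$ are $\mathcal{G}$-measurable, and let $P_t^{\xi}(x,\cdot):=\mathbb{P}(X_t^x\in\cdot\mid\mathcal{G})$ denote the quenched transition law. From \eqref{EQ:02} one reads off that, conditionally on $\mathcal{G}$,
\[
\int_{\mathbb{R}_{\geq0}}\mathrm{e}^{-\lambda y}P_t^{\xi}(x,\mathrm{d}y)=\mathrm{e}^{-x v_{0,t}^{\xi}(\lambda)}\exp\Bigl(-\int_0^t\psi\bigl(v_{s,t}^{\xi}(\lambda)\bigr)\mathrm{d}s\Bigr),
\]
so that $P_t^{\xi}(x,\cdot)=Q_t^{\xi}(x,\cdot)\ast I_t^{\xi}$, where $Q_t^{\xi}(x,\cdot)$ is the probability measure with Laplace transform $\mathrm{e}^{-x v_{0,t}^{\xi}(\lambda)}$ (the quenched law of the pure branching part started from $x$) and $I_t^{\xi}$ is the probability measure with Laplace transform $\exp(-\int_0^t\psi(v_{s,t}^{\xi}(\lambda))\mathrm{d}s)$, which does not depend on $x$. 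Multiplicativity in $x$ yields the branching identity $Q_t^{\xi}(x+\widetilde{x},\cdot)=Q_t^{\xi}(x,\cdot)\ast Q_t^{\xi}(\widetilde{x},\cdot)$, and letting $\lambda\to\infty$ identifies the mass at the origin, $Q_t^{\xi}(x,\{0\})=\mathrm{e}^{-x\overline{v}_{0,t}^{\xi}}$, where $\overline{v}_{0,t}^{\xi}$ is a.s.\ finite precisely because of Grey's condition (5.a).

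To prove \eqref{EQ:07} I would bound the quenched total variation distance. Fix $0\leq x\leq y$. Since $d_{TV}(a+c,\widetilde{a}+c)=d_{TV}(a,\widetilde{a})$, Lemma \ref{lem:convolution estimate} with $d=d_{TV}$ lets the common factor $I_t^{\xi}$ cancel, so that $\|P_t^{\xi}(x,\cdot)-P_t^{\xi}(y,\cdot)\|_{TV}\leq\|Q_t^{\xi}(x,\cdot)-Q_t^{\xi}(y,\cdot)\|_{TV}$; writing $Q_t^{\xi}(y,\cdot)=Q_t^{\xi}(x,\cdot)\ast Q_t^{\xi}(y-x,\cdot)$ and using that convolution with a probability measure contracts $\|\cdot\|_{TV}$, the right-hand side is bounded by $\|\delta_0-Q_t^{\xi}(y-x,\cdot)\|_{TV}=1-\mathrm{e}^{-(y-x)\overline{v}_{0,t}^{\xi}}\leq|x-y|\,\overline{v}_{0,t}^{\xi}$. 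Averaging over $\mathcal{G}$ (using $P_t(x,\cdot)=\mathbb{E}[P_t^{\xi}(x,\cdot)]$ and convexity of $\|\cdot\|_{TV}$) gives
\[
\|P_t(x,\cdot)-P_t(y,\cdot)\|_{TV}\leq\mathbb{E}\bigl[1\wedge(|x-y|\,\overline{v}_{0,t}^{\xi})\bigr]\leq|x-y|\,\mathbb{E}\bigl[\overline{v}_{0,t}^{\xi}\bigr].
\]
Since $\pi\in\mathcal{P}_1(\mathbb{R}_{\geq0})$ by Theorem \ref{thm:ergodicity in Wasserstein distance CBIRE} and $P_t^{\ast}\pi=\pi$, combining this with the convexity inequality of Lemma \ref{lem:convexity of wasserstein distances} for an optimal $W_1$-coupling $H$ of $(\varrho,\pi)$ yields
\[
\|P_t^{\ast}\varrho-\pi\|_{TV}\leq\int_{\mathbb{R}_{\geq0}\times\mathbb{R}_{\geq0}}\|P_t(x,\cdot)-P_t(y,\cdot)\|_{TV}\,H(\mathrm{d}x,\mathrm{d}y)\leq\mathbb{E}[\overline{v}_{0,t}^{\xi}]\,W_1(\varrho,\pi),
\]
which is \eqref{EQ:07} (the factor $2$ there is harmless slack, e.g.\ from $W_{d_{TV}}\leq\|\cdot\|_{TV}$, and the estimate is only informative when $\mathbb{E}[\overline{v}_{0,t}^{\xi}]<\infty$).

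For \eqref{EQ:06} I keep the sharper bound $\|P_t^{\ast}\varrho-\pi\|_{TV}\leq\int\mathbb{E}[1\wedge(|x-y|\,\overline{v}_{0,t}^{\xi})]\,H(\mathrm{d}x,\mathrm{d}y)$, valid for any coupling $H$ of $(\varrho,\pi)$; as the integrand is bounded by $1$, it suffices to show $\overline{v}_{0,t}^{\xi}\to0$ almost surely as $t\to\infty$ and then apply dominated convergence twice. Because $0$ is absorbing for the quenched branching part, for every fixed $s$ the map $t\mapsto\overline{v}_{s,t}^{\xi}$ is non-increasing (the quenched probability $\mathrm{e}^{-x\overline{v}_{s,t}^{\xi}}$ that the branching part run from time $s$ and started at $x$ is at $0$ by time $t$ is non-decreasing in $t$); put $\overline{v}_{0,\infty}^{\xi}:=\lim_{t\to\infty}\overline{v}_{0,t}^{\xi}$. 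The remaining point---which I expect to be the main obstacle---is that $\overline{v}_{0,\infty}^{\xi}=0$ a.s. To see it, observe that for fixed $\lambda$ and $t$ the map $s\mapsto\mathrm{e}^{\xi(s)}v_{s,t}^{\xi}(\lambda)$ is absolutely continuous on $[0,t]$ with derivative $\mathrm{e}^{\xi(s)}\phi(v_{s,t}^{\xi}(\lambda))\geq b\,\mathrm{e}^{\xi(s)}v_{s,t}^{\xi}(\lambda)$ (using $\phi(w)\geq\phi'(0)w=bw$ by convexity of $\phi$), so that $v_{0,t}^{\xi}(\lambda)\leq\mathrm{e}^{\xi(s)-bs}v_{s,t}^{\xi}(\lambda)$ and, letting $\lambda\to\infty$ and then $t\to\infty$ (using the monotonicity of $t\mapsto\overline{v}_{s,t}^{\xi}$),
\[
\overline{v}_{0,\infty}^{\xi}\leq\mathrm{e}^{\xi(s)-bs}\,\overline{v}_{s,s+1}^{\xi},\qquad s\geq0.
\]
Now choose an increasing sequence of stopping times $s_k\uparrow\infty$ with $\xi(s_k)-bs_k\to-\infty$ a.s.: if $b\geq0$, take $s_k=\inf\{t\geq0:\xi(t)\leq-k\}$, which are finite since $\liminf_{t\to\infty}\xi(t)=-\infty$ and satisfy $\xi(s_k)-bs_k\leq-k$; if $b<0$, the relation $b_E=a_E+\frac{1}{2}\sigma_E^2+\int_{[-1,1]}(\mathrm{e}^z-1-z)\mu_E(\mathrm{d}z)$ together with $\mathrm{e}^z-1-z\geq0$ gives $\mathbb{E}[\xi(1)]\leq\mathbb{E}[Z_1]<b$, so $\xi(t)-bt\to-\infty$ a.s.\ by the strong law of large numbers for L\'evy processes and $s_k=k$ works. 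By the strong Markov property of $\xi$, $\overline{v}_{s_k,s_k+1}^{\xi}$ is independent of $\xi(s_k)$ and has the law of the a.s.\ finite variable $\overline{v}_{0,1}^{\xi}$; hence, with $g(a):=\mathbb{E}[(\mathrm{e}^a\overline{v}_{0,1}^{\xi})\wedge1]$, which is continuous and tends to $0$ as $a\to-\infty$, conditioning on the history of $\xi$ up to $s_k$ in the last display gives $\mathbb{E}[\overline{v}_{0,\infty}^{\xi}\wedge1]\leq\mathbb{E}[g(\xi(s_k)-bs_k)]\to0$, so $\overline{v}_{0,\infty}^{\xi}=0$ a.s. (Alternatively, this is the a.s.\ extinction of a continuous-state branching process in a L\'evy environment with $\liminf\xi=-\infty$, which can be quoted from \cite{MR3866603}.) Two applications of dominated convergence then give $\|P_t(x,\cdot)-P_t(y,\cdot)\|_{TV}\to0$ for all $x,y$ and hence \eqref{EQ:06}. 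The total variation estimates in the first two paragraphs are routine once the quenched branching decomposition is in place; the real work lies in controlling $\overline{v}_{0,t}^{\xi}$ as $t\to\infty$.
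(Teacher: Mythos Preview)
Your proof is correct and follows essentially the same approach as the paper: establish the pointwise bound $\|P_t(x,\cdot)-P_t(y,\cdot)\|_{TV}\leq C\,\mathbb{E}[1\wedge(|x-y|\,\overline{v}_{0,t}^{\xi})]$ via the branching structure, then use convexity with an optimal $W_1$-coupling of $(\varrho,\pi)$, and finally dominated convergence once $\overline{v}_{0,t}^{\xi}\to0$. The difference is purely expository: where the paper cites \cite[Theorem~4.5]{MR3866603} for the TV bound and \cite[Corollary~4.4]{MR3866603} for $\overline{v}_{0,t}^{\xi}\to0$, you unpack both arguments explicitly---your quenched convolution decomposition $P_t^{\xi}(x,\cdot)=Q_t^{\xi}(x,\cdot)\ast I_t^{\xi}$ together with the branching identity is precisely what underlies the cited Theorem~4.5, and your stopping-time argument is a self-contained substitute for Corollary~4.4 (your parenthetical citation covers the edge case where $\int_{(-\infty,-1)}|z|\,\mu_E(\mathrm{d}z)=\infty$ and $\mathbb{E}[\xi_1]$ is not finite, though even then $\xi_t-bt\to-\infty$ a.s.\ and your argument goes through). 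As a bonus, your route yields the bound without the factor $2$.
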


\begin{proof}
As a consequence of Grey's condition, \cite[Theorem 4.1]{MR3866603} applies, yielding that
$\overline{v}_{0,t}^{\xi} \in[0,\infty)$ almost surely for all $t>0$.
Let $f \in \mathcal{B}_b(\mathbb{R}_{\geq 0})$ be arbitrary.
Arguing literally as in the proof of \cite[Theorem 4.5]{MR3866603}, we observe that,
for $0 \leq x \leq y$,
\begin{align*}
\left\vert P_tf(x) - P_tf(y)\right\vert \leq 2 \Vert f \Vert_{\infty} \mathbb{E}\left[ 1 - \mathrm{e}^{-(y-x)\overline{v}_{0,t}^{\xi}}\right]
\leq 2 \Vert f \Vert_{\infty} \mathbb{E}\left [ \min\left\lbrace 1, \vert x-y\vert\overline{v}_{0,t}^{\xi}\right\rbrace \right ].
\end{align*}
Taking the supremum over all $f \in \mathcal{B}_b(\mathbb{R}_{\geq 0})$ shows that
$\Vert P_t(x,\cdot) - P_t(y,\cdot) \Vert_{TV} \leq 2 \mathbb{E}[ \min\{1,\vert x-y\vert \overline{v}_{0,t}^{\xi}\} ]$.
Let now $\varrho \in \mathcal{P}_{1}(\mathbb{R}_{\geq 0})$ and let
$H$ be any coupling of $(\varrho, \pi)$.
By convexity of the Wasserstein distance, we obtain
\begin{align*}
\left\Vert P_t^* \varrho - \pi \right\Vert_{TV}
&\leq \int_{\mathbb{R}_{\geq 0} \times \mathbb{R}_{\geq 0}} \left\Vert P_t(x,\cdot) - P_t(y,\cdot)\right\Vert_{TV} H(\mathrm{d}x,\mathrm{d}y)\\
&\leq 2 \int_{\mathbb{R}_{\geq 0}\times \mathbb{R}_{\geq 0}} \mathbb{E}\left[ \min\left\lbrace 1, \vert x-y\vert \overline{v}_{0,t}^{\xi}\right\rbrace \right] H(\mathrm{d}x,\mathrm{d}y).
\end{align*}
If $\liminf_{t \to \infty}\xi(t) = -\infty$,
then $\lim_{t \to \infty}\overline{v}_{0,t}^{\xi} = 0$ in view of \cite[Corollary 4.4]{MR3866603} and, thus, \eqref{EQ:06} follows from dominated convergence.
Finally, we conclude with the estimate \eqref{EQ:07} by estimating
\[
\int_{\mathbb{R}_{\geq 0}\times \mathbb{R}_{\geq 0}} \mathbb{E}\left[ \min\left\lbrace 1, \vert x-y\vert \overline{v}_{0,t}^{\xi}\right\rbrace \right] H(\mathrm{d}x,\mathrm{d}y)
\leq \mathbb{E}\left[ \overline{v}_{0,t}^{\xi}\right] \int_{\mathbb{R}_{\geq 0}\times \mathbb{R}_{\geq 0}} \vert x-y\vert H(\mathrm{d}x,\mathrm{d}y),
\]
where we chose $H$ as the optimal coupling of $(\varrho, \pi)$ with respect to $W_{1}$.
\end{proof}

The decay rate for $\overline{v}_{0,t}^{\xi}$ as $t\to\infty$ was studied by Palau and Pardo \cite{MR3605717} for a continuous-state branching process in Brownian random environment with stable branching. For the same class of processes but in a general L\'evy environment this problem was studied by Li and Xu \cite{MR3729532}.

\setcounter{section}{0}
\setcounter{equation}{0}
\renewcommand{\theequation}{\thesection.\arabic{equation}}
\setcounter{figure}{0}
\setcounter{table}{0}

\appendix
\section*{Appendix}
\renewcommand{\thesection}{A}
\setcounter{thm}{0}

\begin{lem}\label{lem:lyapunov estimate1}
Let $\gamma_0,\thinspace \gamma_1,\thinspace \gamma_2$ be Borel functions on $\mathbb{R}_{\geq 0}$
and $m,\thinspace \nu$ Borel measures on $\mathbb{R}_{\geq 0}$ satisfying \eqref{eq:measure conditions}. Suppose that conditions (i) -- (iii) of Theorem \ref{thm:pathwise uniqueness strong solution of nonlinear cbi}
and condition (b) of Theorem \ref{thm:exponential ergodicity in wassterstein distance for nonlinear cbi} are satisfied.
For $\lambda\in[1,2]$, define $V_{\lambda}(x) = (1 + x)^{\lambda}$, $x\geq0$. Then there exists a constant
$C > 0$ such that
\[
LV_{\lambda}(x) \leq CV_{\lambda}(x), \quad x \geq 0,
\]
where the operator $L$ is given in \eqref{EQ:NONLINEARCBI}.
\end{lem}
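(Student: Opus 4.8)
\textit{Strategy.} The plan is to estimate each of the four terms in \eqref{EQ:NONLINEARCBI} separately against $V_{\lambda}(x)=(1+x)^{\lambda}$, using the restriction $\lambda\in[1,2]$ at several points. First I would record the elementary facts $V_{\lambda}'(x)=\lambda(1+x)^{\lambda-1}$, $V_{\lambda}''(x)=\lambda(\lambda-1)(1+x)^{\lambda-2}$, that $V_{\lambda}$ is convex (since $\lambda\geq1$), and that $(1+x)^{a}$ is nondecreasing in $a$ for $x\geq0$; in particular $(1+x)^{2\lambda-2}\leq(1+x)^{\lambda}=V_{\lambda}(x)$, because $2\lambda-2\leq\lambda$.

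\textit{Drift, diffusion and branching terms.} For the drift term, condition (ii) of Theorem \ref{thm:pathwise uniqueness strong solution of nonlinear cbi} gives $\gamma_0(x)V_{\lambda}'(x)\leq K\lambda(1+x)^{\lambda}=K\lambda V_{\lambda}(x)$. For the diffusion term, condition (b) of Theorem \ref{thm:exponential ergodicity in wassterstein distance for nonlinear cbi} gives $\gamma_1(x)\leq K(1+x)^{\lambda}$, so $\tfrac12\gamma_1(x)V_{\lambda}''(x)\leq\tfrac{K\lambda(\lambda-1)}{2}(1+x)^{2\lambda-2}\leq\tfrac{K\lambda(\lambda-1)}{2}V_{\lambda}(x)$. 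For the branching term I would write the Taylor remainder with integral form, $V_{\lambda}(x+z)-V_{\lambda}(x)-zV_{\lambda}'(x)=\int_0^z(z-s)V_{\lambda}''(x+s)\,\mathrm{d}s$, and use that $s\mapsto(1+x+s)^{\lambda-2}$ is nonincreasing (here $\lambda\leq2$ is essential) to obtain $0\leq V_{\lambda}(x+z)-V_{\lambda}(x)-zV_{\lambda}'(x)\leq\tfrac{\lambda(\lambda-1)}{2}(1+x)^{\lambda-2}z^2$ for all $z\geq0$. Since \eqref{eq:measure conditions} yields $\int_{(0,1]}z^2 m(\mathrm{d}z)<\infty$ and condition (b) yields $\int_{(1,\infty)}z^2 m(\mathrm{d}z)<\infty$, the constant $C_m:=\int_0^{\infty}z^2 m(\mathrm{d}z)$ is finite, and the branching term is bounded by $\gamma_2(x)\tfrac{\lambda(\lambda-1)}{2}(1+x)^{\lambda-2}C_m\leq\tfrac{K\lambda(\lambda-1)C_m}{2}(1+x)^{2\lambda-2}\leq\tfrac{K\lambda(\lambda-1)C_m}{2}V_{\lambda}(x)$.

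\textit{Immigration term.} I would split $\int_0^{\infty}(V_{\lambda}(x+z)-V_{\lambda}(x))\nu(\mathrm{d}z)$ at $z=1$. On $\{z\leq1\}$, the mean value theorem and $\lambda\geq1$ give $V_{\lambda}(x+z)-V_{\lambda}(x)\leq\lambda z(1+x+z)^{\lambda-1}\leq2\lambda z(1+x)^{\lambda-1}$, and $\int_{(0,1]}z\,\nu(\mathrm{d}z)<\infty$ by \eqref{eq:measure conditions}. On $\{z>1\}$, using $1+x+z\leq(1+x)(1+z)$ I get $V_{\lambda}(x+z)-V_{\lambda}(x)\leq(1+x)^{\lambda}(1+z)^{\lambda}$, while $\int_{(1,\infty)}(1+z)^{\lambda}\nu(\mathrm{d}z)\leq2^{\lambda}\int_{(1,\infty)}z^{\lambda}\nu(\mathrm{d}z)<\infty$ by condition (b). Since $(1+x)^{\lambda-1}\leq(1+x)^{\lambda}$, both pieces are bounded by a constant multiple of $V_{\lambda}(x)$. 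Summing the four estimates gives $LV_{\lambda}(x)\leq CV_{\lambda}(x)$ with $C$ depending only on $\lambda$, $K$, $C_m$ and the finite $\nu$-integrals.

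\textit{Main obstacle.} The only genuine subtlety is the uncompensated immigration term: for large jumps a Taylor-type bound is unavailable, and one must instead control the superlinear growth of $V_{\lambda}$ via the submultiplicative inequality $1+x+z\leq(1+x)(1+z)$, which is exactly where the moment assumption $\int_{(1,\infty)}z^{\lambda}\nu(\mathrm{d}z)<\infty$ in condition (b) is consumed; the remaining estimates are routine consequences of the convexity of $V_{\lambda}$ and the constraint $\lambda\in[1,2]$ (used both for $V_{\lambda}''\geq0$ and for $(1+x)^{2\lambda-2}\leq(1+x)^{\lambda}$).
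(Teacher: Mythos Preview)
Your proof is correct and follows essentially the same route as the paper: decompose $LV_\lambda$ into drift, diffusion, compensated (branching) jump, and uncompensated (immigration) jump parts, and bound each by a multiple of $V_\lambda$ using the Taylor remainder for the compensated piece and the submultiplicative inequality $1+x+z\leq(1+x)(1+z)$ for the large immigration jumps. The only cosmetic difference is that the paper bounds $V_\lambda''(x+sz)\leq\lambda(\lambda-1)$ uniformly (using $\lambda\leq2$) to get the Taylor remainder $\leq Cz^2$ and then absorbs $\gamma_2(x)\leq K(1+x)^\lambda$ directly into $V_\lambda(x)$, whereas you keep the factor $(1+x)^{\lambda-2}$ from $V_\lambda''$ and combine it with $\gamma_2(x)\leq K(1+x)^\lambda$ to land on $(1+x)^{2\lambda-2}\leq V_\lambda(x)$; both give the same bound.
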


\begin{proof}
Defining the operators
\begin{align*}
DV_{\lambda}(x) &:= \gamma_0(x)V_{\lambda}'(x)+\frac{\gamma_1(x)}{2}V_{\lambda}''(x), \\
J_mV_{\lambda}(x) &:= \gamma_2(x)\int_{\mathbb{R}_{\geq0}}\left(V_{\lambda}(x+z)-V_{\lambda}(x)-zV_{\lambda}'(x)\right)m\left(\mathrm{d}z\right), \\
J_{\nu}V_{\lambda}(x) &:= \int_{\mathbb{R}_{\geq0}}\left(V_{\lambda}(x+z)-V_{\lambda}(x)\right)\nu\left(\mathrm{d}z\right),
\end{align*}
we see that $LV_{\lambda}=DV_{\lambda}+J_mV_{\lambda}+J_{\nu}V_{\lambda}$. Moreover, it holds that $V_{\lambda}'(x) = \lambda (1 + x)^{\lambda - 1}$ and
$V_{\lambda}''(x) = \lambda(\lambda-1)(1+x)^{\lambda - 2}$.
In the following $C > 0$ denotes some generic constant which may vary from line to line. The drift can be easily estimated by
\begin{align*}
DV_{\lambda}(x)
\leq C (1 +x)(1+x)^{\lambda - 1} + C (1+x)^{\lambda}(1+x)^{\lambda - 2}
\leq C V_{\lambda}(x).
\end{align*}
For the state-dependent jumps, by using the mean-value theorem twice, we get
\begin{align*}
V_{\lambda}(x+z) - V_{\lambda}(x) - z V_{\lambda}'(x)
= z^2\int_{0}^{1}(1-s)V_{\lambda}''(x+sz)\mathrm{d}s
\leq C z^2,
\end{align*}
and hence
$J_mV_{\lambda}(x)\leq C V_{\lambda}(x)$.
Turning to the state-independent jumps, we use the mean-value theorem to show that
\[
V_{\lambda}(x+z) - V_{\lambda}(x)
\leq C \left( \mathbbm{1}_{(0,1]}(z)z + \mathbbm{1}_{[1,\infty)}(z)z^{\lambda} \right)V_{\lambda}(x),
\]
which implies $J_{\nu}V_{\lambda}(x) \leq C V_{\lambda}(x)$.
Collecting the estimates for $DV_{\lambda}$, $J_mV_{\lambda}$, and $J_{\nu}V_{\lambda}$ proves the asserted.
\end{proof}

\begin{lem}\label{lem:lyapunov estimate}
Let $(\beta, b, \sigma, m, \nu)$ be admissible parameters
and suppose that \eqref{eq:inequality w_log pi} holds.
Let $V(x):=\log(1+x)$, $x\geq0$. Then there exists a constant $C>0$ such that
\[
LV(x)\leq C,\quad x\geq0,
\]
where the operator $L$ is given in \eqref{eq:generator cbi}.
\end{lem}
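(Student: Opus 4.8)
The plan is to decompose the generator as $LV = DV + J_m V + J_\nu V$, where
\[
DV(x) := (\beta - bx)V'(x) + \tfrac{\sigma^2}{2}x V''(x), \qquad J_\nu V(x) := \int_{\mathbb{R}_{\geq 0}}(V(x+z) - V(x))\nu(\mathrm{d}z),
\]
and $J_m V(x) := x\int_{\mathbb{R}_{\geq 0}}(V(x+z) - V(x) - zV'(x))m(\mathrm{d}z)$, and to bound each summand by a constant independent of $x$. Throughout I would use $V'(x) = (1+x)^{-1}$, $V''(x) = -(1+x)^{-2}$, and $V(x+z) - V(x) = \log(1 + z/(1+x))$. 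The local part is immediate: since $\tfrac{\beta - bx}{1+x} = \tfrac{\beta + b}{1+x} - b$, the term $(\beta - bx)V'(x)$ is bounded on $\mathbb{R}_{\geq 0}$, while $\tfrac{\sigma^2}{2}x V''(x) = -\tfrac{\sigma^2 x}{2(1+x)^2} \leq 0$, so $DV(x) \leq C$.

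Next I would treat the branching part. Concavity and monotonicity of $V$ give $0 \leq zV'(x) - (V(x+z) - V(x)) \leq zV'(x)$, hence $\vert V(x+z) - V(x) - zV'(x)\vert \leq z/(1+x)$; combined with the Taylor bound $\vert V(x+z) - V(x) - zV'(x)\vert \leq \tfrac{z^2}{2}\sup_{\xi \geq x}\vert V''(\xi)\vert = \tfrac{z^2}{2(1+x)^2}$, this yields
\[
x\,\vert V(x+z) - V(x) - zV'(x)\vert \leq \min\left\{\tfrac{xz}{1+x},\ \tfrac{xz^2}{2(1+x)^2}\right\} \leq z \wedge z^2 .
\]
Integrating against $m$ and using \eqref{eq:measure conditions} then bounds $\vert J_m V(x)\vert$ by $\int_{\mathbb{R}_{\geq 0}}(z\wedge z^2)m(\mathrm{d}z) < \infty$, uniformly in $x$.

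Finally, the immigration part $J_\nu V$ is where the hypothesis \eqref{eq:inequality w_log pi} enters, and I would split the integral at $z = 1$. For $z \leq 1$ one has $\log(1 + z/(1+x)) \leq z/(1+x) \leq z$ and $\int_{\{z \leq 1\}} z\,\nu(\mathrm{d}z) < \infty$ by \eqref{eq:measure conditions}; for $z > 1$ one has $\log(1 + z/(1+x)) \leq \log(1+z) \leq \log 2 + \log z$, while $\nu(\{z > 1\}) < \infty$ (again from \eqref{eq:measure conditions}, as $1 \wedge z = 1$ there) and $\int_{\{z>1\}}\log z\,\nu(\mathrm{d}z) < \infty$ by \eqref{eq:inequality w_log pi}. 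All of these bounds are uniform in $x$, so $J_\nu V(x) \leq C$, and summing the three estimates completes the proof. I do not expect a genuine obstacle here; the only points requiring care are keeping every constant independent of $x$ and observing that $\int_{\{z>1\}} z\,m(\mathrm{d}z) < \infty$ and $\nu(\{z>1\}) < \infty$ are already forced by \eqref{eq:measure conditions}, so that the log-moment assumption \eqref{eq:inequality w_log pi} is needed only to control the large jumps of the immigration measure.
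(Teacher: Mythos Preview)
Your proposal is correct and follows essentially the same decomposition $LV = DV + J_mV + J_\nu V$ as the paper, with identical treatment of $DV$ and $J_\nu V$. The only minor variation is in $J_mV$: the paper splits into small and large jumps, observes that the small-jump contribution is $\leq 0$ by concavity, and bounds the large-jump part by $\int_{\{z>1\}} z\,m(\mathrm{d}z)$; you instead combine the first- and second-order Taylor estimates into the single bound $x\lvert V(x+z)-V(x)-zV'(x)\rvert \leq z\wedge z^2$, which is a slightly more compact but equivalent argument.
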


\begin{proof}
Let us introduce the operators
\begin{align*}
DV(x) &:= \left( \beta-bx\right)V'(x)+\sigma xV''(x);\\
J_mV(x) &:= x\int_0^{\infty}\left(V(x+z)-V(x)-zV'(x)\right)m\left(\mathrm{d}z\right);\\
J_{\nu}V(x) &:= \int_0^{\infty}\left(V(x+z)-V(x)\right)\nu\left(\mathrm{d}z\right).
\end{align*}
So $LV=DV+J_mV+J_{\nu}V$. We now estimate $DV$, $J_mV$, and $J_{\nu}V$ separately. Concerning $DV$, it is easy to see that
$DV(x)\leq \frac{\beta}{1+x}-\frac{bx}{1+x}\leq \beta + |b|$.
Turning to $J_{\nu}V$, we first note that
\begin{align}\label{EQ:00}
V(x+z)-V(x)=\log\left(1+\frac{z}{1+x}\right)\leq\frac{z}{1+x}\leq z
\end{align}
on the one hand and
\[
V(x+z)-V(x)=\log\left(1+\frac{z}{1+x}\right)\leq \log(1+z)
\]
on the other hand.
Having established the latter inequalities, we obtain
\begin{equation*}
J_{\nu}V(x)\leq \int_0^{\infty}\left( z\mathbbm{1}_{\lbrace 0< z\leq1\rbrace}
+\log(1+z)\mathbbm{1}_{\lbrace z>1\rbrace}\right)\nu\left(\mathrm{d}z\right)<\infty.
\end{equation*}
For $J_mV$, we decompose it further as $J_mV=J_{m,\ast}V+J_m^{\ast}V$, where
\begin{align*}
J_{m,\ast}V(x)&:=x\int_{\lbrace 0<z\leq 1\rbrace}\left(V(x+z)-V(x)-zV'(x)\right)m\left(\mathrm{d}z\right),\\
J_m^{\ast}V(x)&:= x\int_{\lbrace z>1\rbrace}\left(V(x+z)-V(x)-zV'(x)\right)m\left(\mathrm{d}z\right).
\end{align*}
For $J_{m}^*$ we use \eqref{EQ:00} to obtain
\[
J_m^{\ast}V(x)
\leq \frac{x}{1+x}\int_{\lbrace z>1\rbrace}z m\left(\mathrm{d}z\right)
\leq \int_{\lbrace z>1\rbrace}zm\left(\mathrm{d}z\right)<\infty.
\]
For $J_{m,*}$ we use the mean value theorem and
\[
 V'(x+rz) - V'(x) = \frac{1}{x+rz} - \frac{1}{x} = - \frac{rz}{x(x+rz)}
 \leq 0,
\]
where $r \in [0,1]$, to obtain
\begin{align*}
 J_{m,\ast}V(x)
&= x\int_{\lbrace 0<z\leq 1\rbrace}\left(\int_0^1\left(V'(x+rz)-V'(x)\right)z\mathrm{d}r\right)m\left(\mathrm{d}z\right) \leq 0.
\end{align*}
Combining the estimates for $DV, \thinspace J_{m,*}V, \thinspace J_m^*V$, and $J_{\nu}V$
yields the asserted estimate.
\end{proof}

\def\cprime{$'$} \def\cprime{$'$}
\providecommand{\bysame}{\leavevmode\hbox to3em{\hrulefill}\thinspace}
\providecommand{\MR}{\relax\ifhmode\unskip\space\fi MR }
\providecommand{\MRhref}[2]{%
  \href{http://www.ams.org/mathscinet-getitem?mr=#1}{#2}
}
\providecommand{\href}[2]{#2}


\begin{thebibliography}{10}

\bibitem{MR3769658}
M\'{a}ty\'{a}s Barczy, Mohamed Ben~Alaya, Ahmed Kebaier, and Gyula Pap,
  \emph{Asymptotic properties of maximum likelihood estimator for the growth
  rate for a jump-type {CIR} process based on continuous time observations},
  Stochastic Process. Appl. \textbf{128} (2018), no.~4, 1135--1164.
  \MR{3769658}

\bibitem{MR3340375}
M\'aty\'as Barczy, Zenghu Li, and Gyula Pap, \emph{Stochastic differential
  equation with jumps for multi-type continuous state and continuous time
  branching processes with immigration}, ALEA Lat. Am. J. Probab. Math. Stat.
  \textbf{12} (2015), no.~1, 129--169. \MR{3340375}

\bibitem{MR3035738}
Jean-Baptiste Bardet, Alejandra Christen, Arnaud Guillin, Florent Malrieu, and
  Pierre-Andr\'{e} Zitt, \emph{Total variation estimates for the {TCP}
  process}, Electron. J. Probab. \textbf{18} (2013), no. 10, 21. \MR{3035738}

\bibitem{MR2995525}
Mohamed Ben~Alaya and Ahmed Kebaier, \emph{Parameter estimation for the
  square-root diffusions: ergodic and nonergodic cases}, Stoch. Models
  \textbf{28} (2012), no.~4, 609--634. \MR{2995525}

\bibitem{MR3175784}
\bysame, \emph{Asymptotic behavior of the maximum likelihood estimator for
  ergodic and nonergodic square-root diffusions}, Stoch. Anal. Appl.
  \textbf{31} (2013), no.~4, 552--573. \MR{3175784}

\bibitem{MR663900}
R.~N. Bhattacharya, \emph{On the functional central limit theorem and the law
  of the iterated logarithm for {M}arkov processes}, Z. Wahrsch. Verw. Gebiete
  \textbf{60} (1982), no.~2, 185--201. \MR{663900}

\bibitem{B14}
Oleg Butkovsky, \emph{Subgeometric rates of convergence of {M}arkov processes
  in the {W}asserstein metric}, Ann. Appl. Probab. \textbf{24} (2014), no.~2,
  526--552. \MR{3178490}

\bibitem{MR2653264}
Djalil Chafa\"{i}, Florent Malrieu, and Katy Paroux, \emph{On the long time
  behavior of the {TCP} window size process}, Stochastic Process. Appl.
  \textbf{120} (2010), no.~8, 1518--1534. \MR{2653264}

\bibitem{MR2243880}
D.~A. Dawson and Zenghu Li, \emph{Skew convolution semigroups and affine
  {M}arkov processes}, Ann. Probab. \textbf{34} (2006), no.~3, 1103--1142.
  \MR{2243880}

\bibitem{MR2952093}
Donald~A. Dawson and Zenghu Li, \emph{Stochastic equations, flows and
  measure-valued processes}, Ann. Probab. \textbf{40} (2012), no.~2, 813--857.
  \MR{2952093}

\bibitem{MR1994043}
D.~Duffie, D.~Filipovi{\'c}, and W.~Schachermayer, \emph{Affine processes and
  applications in finance}, Ann. Appl. Probab. \textbf{13} (2003), no.~3,
  984--1053.

\bibitem{MR3568041}
Andreas Eberle, \emph{Reflection couplings and contraction rates for
  diffusions}, Probab. Theory Related Fields \textbf{166} (2016), no.~3-4,
  851--886. \MR{3568041}

\bibitem{MR838085}
Stewart~N. Ethier and Thomas~G. Kurtz, \emph{Markov processes: Characterization
  and convergence}, Wiley Series in Probability and Mathematical Statistics:
  Probability and Mathematical Statistics, John Wiley \& Sons, Inc., New York,
  1986. \MR{838085 (88a:60130)}

\bibitem{MR0046022}
William Feller, \emph{Diffusion processes in genetics}, Proceedings of the
  {S}econd {B}erkeley {S}ymposium on {M}athematical {S}tatistics and
  {P}robability, 1950, University of California Press, Berkeley and Los
  Angeles, 1951, pp.~227--246. \MR{0046022}

\bibitem{MR3060151}
Nicolas Fournier, \emph{On pathwise uniqueness for stochastic differential
  equations driven by stable {L}\'{e}vy processes}, Ann. Inst. Henri
  Poincar\'{e} Probab. Stat. \textbf{49} (2013), no.~1, 138--159. \MR{3060151}

\bibitem{FJR}
Martin Friesen, Peng Jin, and Barbara R{\"u}diger, \emph{Stochastic equation
  and exponential ergodicity in {W}asserstein distances for affine processes},
  (2018).

\bibitem{MR2584896}
Zongfei Fu and Zenghu Li, \emph{Stochastic equations of non-negative processes
  with jumps}, Stochastic Process. Appl. \textbf{120} (2010), no.~3, 306--330.
  \MR{2584896 (2011d:60178)}

\bibitem{MR3866603}
Hui He, Zenghu Li, and Wei Xu, \emph{Continuous-state branching processes in
  {L}\'{e}vy random environments}, J. Theoret. Probab. \textbf{31} (2018),
  no.~4, 1952--1974. \MR{3866603}

\bibitem{2017arXiv170900969J}
Peng {Jin}, Jonas {Kremer}, and Barbara {R{\"u}diger}, \emph{{Moments and
  ergodicity of the jump-diffusion CIR process}}, arXiv e-prints (2017),
  arXiv:1709.00969.

\bibitem{2018arXiv181205402J}
\bysame, \emph{{Existence of limiting distribution for affine processes}},
  arXiv e-prints (2018), arXiv:1812.05402.

\bibitem{MR3451177}
Peng Jin, Barbara R\"{u}diger, and Chiraz Trabelsi, \emph{Exponential
  ergodicity of the jump-diffusion {CIR} process}, Stochastics of environmental
  and financial economics---{C}entre of {A}dvanced {S}tudy, {O}slo, {N}orway,
  2014--2015, Springer Proc. Math. Stat., vol. 138, Springer, Cham, 2016,
  pp.~285--300. \MR{3451177}

\bibitem{MR3437080}
\bysame, \emph{Positive {H}arris recurrence and exponential ergodicity of the
  basic affine jump-diffusion}, Stoch. Anal. Appl. \textbf{34} (2016), no.~1,
  75--95. \MR{3437080}

\bibitem{MR0101554}
Miloslav Ji\v{r}ina, \emph{Stochastic branching processes with continuous state
  space}, Czechoslovak Math. J. \textbf{8 (83)} (1958), 292--313. \MR{0101554}

\bibitem{doi:10.1137/1116003}
K.~Kawazu and S.~Watanabe, \emph{Branching processes with immigration and
  related limit theorems}, Theory of Probability \& Its Applications
  \textbf{16} (1971), no.~1, 36–54.

\bibitem{MR2779872}
Martin Keller-Ressel, \emph{Moment explosions and long-term behavior of affine
  stochastic volatility models}, Math. Finance \textbf{21} (2011), no.~1,
  73--98. \MR{2779872 (2012e:91126)}

\bibitem{MR2922631}
Martin Keller-Ressel and Aleksandar Mijatovi\'{c}, \emph{On the limit
  distributions of continuous-state branching processes with immigration},
  Stochastic Process. Appl. \textbf{122} (2012), no.~6, 2329--2345.
  \MR{2922631}

\bibitem{MR2390186}
Martin Keller-Ressel and Thomas Steiner, \emph{Yield curve shapes and the
  asymptotic short rate distribution in affine one-factor models}, Finance
  Stoch. \textbf{12} (2008), no.~2, 149--172. \MR{2390186 (2009c:60222)}

\bibitem{LI2018}
Pei-Sen Li, \emph{A continuous-state polynomial branching process}, Stochastic
  Processes and their Applications (2018).

\bibitem{2017arXiv170801560L}
Pei-Sen {Li}, Xu~{Yang}, and Xiaowen {Zhou}, \emph{{A general continuous-state
  nonlinear branching process}}, arXiv e-prints (2017), arXiv:1708.01560.

\bibitem{MR2760602}
Zenghu Li, \emph{Measure-valued branching {M}arkov processes}, Probability and
  its Applications (New York), Springer, Heidelberg, 2011. \MR{2760602}

\bibitem{MR3343292}
Zenghu Li and Chunhua Ma, \emph{Asymptotic properties of estimators in a stable
  {C}ox-{I}ngersoll-{R}oss model}, Stochastic Process. Appl. \textbf{125}
  (2015), no.~8, 3196--3233. \MR{3343292}

\bibitem{MR2884224}
Zenghu Li and Leonid Mytnik, \emph{Strong solutions for stochastic differential
  equations with jumps}, Ann. Inst. Henri Poincar\'{e} Probab. Stat.
  \textbf{47} (2011), no.~4, 1055--1067. \MR{2884224}

\bibitem{MR2965746}
Zenghu Li and Fei Pu, \emph{Strong solutions of jump-type stochastic
  equations}, Electron. Commun. Probab. \textbf{17} (2012), no. 33, 13.
  \MR{2965746}

\bibitem{MR3729532}
Zenghu Li and Wei Xu, \emph{Asymptotic results for exponential functionals of
  {L}\'{e}vy processes}, Stochastic Process. Appl. \textbf{128} (2018), no.~1,
  108--131. \MR{3729532}

\bibitem{MR2287102}
Hiroki Masuda, \emph{Ergodicity and exponential {$\beta$}-mixing bounds for
  multidimensional diffusions with jumps}, Stochastic Process. Appl.
  \textbf{117} (2007), no.~1, 35--56. \MR{2287102}

\bibitem{MR2509253}
Sean Meyn and Richard~L. Tweedie, \emph{Markov chains and stochastic
  stability}, second ed., Cambridge University Press, Cambridge, 2009, With a
  prologue by Peter W. Glynn. \MR{2509253}

\bibitem{MR1234295}
Sean~P. Meyn and R.~L. Tweedie, \emph{Stability of {M}arkovian processes.
  {III}. {F}oster-{L}yapunov criteria for continuous-time processes}, Adv. in
  Appl. Probab. \textbf{25} (1993), no.~3, 518--548. \MR{1234295 (94g:60137)}

\bibitem{MR3605717}
S.~Palau and J.~C. Pardo, \emph{Continuous state branching processes in random
  environment: the {B}rownian case}, Stochastic Process. Appl. \textbf{127}
  (2017), no.~3, 957--994. \MR{3605717}

\bibitem{MR3745730}
\bysame, \emph{Branching processes in a {L}\'{e}vy random environment}, Acta
  Appl. Math. \textbf{153} (2018), 55--79. \MR{3745730}

\bibitem{MR3496029}
\'{E}tienne Pardoux, \emph{Probabilistic models of population evolution},
  Mathematical Biosciences Institute Lecture Series. Stochastics in Biological
  Systems, vol.~1, Springer, [Cham]; MBI Mathematical Biosciences Institute,
  Ohio State University, Columbus, OH, 2016, Scaling limits, genealogies and
  interactions. \MR{3496029}

\bibitem{MR3859440}
Xuhui Peng and Rangrang Zhang, \emph{Exponential ergodicity for {SDE}s under
  the total variation}, J. Evol. Equ. \textbf{18} (2018), no.~3, 1051--1067.
  \MR{3859440}

\bibitem{MR0295450}
Mark~A. Pinsky, \emph{Limit theorems for continuous state branching processes
  with immigration}, Bull. Amer. Math. Soc. \textbf{78} (1972), 242--244.
  \MR{0295450}

\bibitem{MR3684455}
Nikola Sandri\'{c}, \emph{A note on the {B}irkhoff ergodic theorem}, Results
  Math. \textbf{72} (2017), no.~1-2, 715--730. \MR{3684455}

\bibitem{MR2160585}
Rong Situ, \emph{Theory of stochastic differential equations with jumps and
  applications}, Mathematical and Analytical Techniques with Applications to
  Engineering, Springer, New York, 2005, Mathematical and analytical techniques
  with applications to engineering. \MR{2160585 (2006e:60004)}

\bibitem{MR2459454}
C\'{e}dric Villani, \emph{Optimal transport}, Grundlehren der Mathematischen
  Wissenschaften [Fundamental Principles of Mathematical Sciences], vol. 338,
  Springer-Verlag, Berlin, 2009, Old and new. \MR{2459454}

\bibitem{MR3474827}
Jian Wang, \emph{{$L^p$}-{W}asserstein distance for stochastic differential
  equations driven by {L}\'{e}vy processes}, Bernoulli \textbf{22} (2016),
  no.~3, 1598--1616. \MR{3474827}

\end{thebibliography}
\end{document}